\newcounter{ENUM}
\def\namedlabel#1#2{\begingroup
#2%
\def\@currentlabel{#2}%
\phantomsection\label{#1}\endgroup
}
\newcommand{\cover}{\prec\mathrel{\mkern-5mu}\mathrel{\cdot}}
\newcommand{\beas}{\begin{eqnarray*}}
\newcommand{\eeas}{\end{eqnarray*}}
\newtheorem{thm}{Theorem}[section]
\newtheorem{prop}[thm]{Proposition}
\newtheorem{lem}[thm]{Lemma}
\newtheorem{cor}[thm]{Corollary}
\theoremstyle{definition}
\newtheorem{defn}[thm]{Definition}
\newtheorem{ques}[thm]{Question}
\newtheorem{ex}[thm]{Example}
\newtheorem{notn}[thm]{Notation}
\newtheorem{rem}[thm]{Remark}
\numberwithin{equation}{section}
\def\moverlay{\mathpalette\mov@rlay}
\def\mov@rlay#1#2{\leavevmode\vtop{%
	\baselineskip\z@skip \lineskiplimit-\maxdimen
	\ialign{\hfil$\m@th#1##$\hfil\cr#2\crcr}}}
\newcommand{\charfusion}[3][\mathord]{
#1{\ifx#1\mathop\vphantom{#2}\fi
	\mathpalette\mov@rlay{#2\cr#3}
}
\ifx#1\mathop\expandafter\displaylimits\fi}
\def\subsubsection{\@startsection{subsubsection}{3}%
\z@{.5\linespacing\@plus.7\linespacing}{-.5em}%
{\normalfont\bfseries}}
\newcommand{\bigcupdot}{\charfusion[\mathop]{\bigcup}{\cdot}}
\def\Z{{\mathbb Z}}
\def\R{{\mathbb R}}
\def\fS{{\mathfrak S}}
\def\ds{\displaystyle}
\newcommand{\bm}[1]{{\boldsymbol{#1}}}
\def\0{\bm{0}}
\def\a{\bm{a}}
\def\e{\bm{e}}
\def\n{\bm{n}}
\def\v{\bm{v}}
\def\u{\bm{u}}
\def\w{\bm{w}}
\def\x{\bm{x}}
\def\y{\bm{y}}
\def\balpha{\bm{\alpha}}
\def\bbeta{\bm{\beta}}
\def\bdelta{\bm{\delta}}
\newcommand\preceqdot{\mathrel{\ooalign{$\preceq$\cr
		\hidewidth\raise0.225ex\hbox{$\cdot\mkern0.5mu$}\cr}}}
\definecolor{amber}{rgb}{1.0, 0.75, 0.0}
\def\cF{{\mathcal F}}
\def\cK{\mathcal K}
\def\cN{\mathcal N}
\def\cS{\mathcal S}
\def\loday{\mathrm{LodAsso}}
\def\lodayfan{\Lambda}
\newcommand{\ee}{\textbf{e}}
\newcommand{\ff}{\textbf{f}}
\def\sT{\mathscr T}
\def\sO{\mathscr O}
\def\sP{\mathscr P}
\def\sI{\mathscr I}
\def\MM{{\mathscr M}}
\def\KK{{\mathscr K}}
\def\MB{{\mathscr B}}
\newcommand{\OO}{\mathcal{O}}
\def\FL{\mathcal{F}}
\def\x{\mathbf{x}}
\def\ncone{\operatorname{ncone}}
\def\conv{\operatorname{ConvexHull}}
\def\Perm{\operatorname{Perm}}
\def\PA{\operatorname{PermAsso}}
\def\val{\operatorname{val}}
\def\Itv{\operatorname{Itv}}
\newcommand{\hi}{\overline{i}}
\newcommand{\hj}{\overline{j}}
\newcommand{\1}{\mathbf{1}}
\newcommand{\A}{{\mathrm A}}
\newcommand{\DD}{{\mathrm D}}
\newcommand{\GG}{{\mathrm G}}
\newcommand{\II}{{\mathrm I}}
\newcommand{\LL}{{\mathrm L}}
\newcommand{\TT}{{\mathrm T}}
\newcommand{\ZZ}{\mathbb{Z}}
\def\cF{ {\mathcal{F}}}
\def\fS{ {\mathfrak{S}}}
\def\Type{ {\operatorname{Type}}}
\def\Perm{\operatorname{Perm}}
\def\ncone{\operatorname{ncone}}
\def\1{ {\bf{1}}}
\def\balpha{ {\bm \alpha}}
\def\bbeta{ {\bm \beta}}
\numberwithin{equation}{section}
\definecolor{darkgreen}{rgb}{0,0.7,0}
\definecolor{brown}{rgb}{0.7,0.4,0}
\newcommand{\Br}{\textrm{Br}}
\newcommand\commentout[1]{}
\author{Federico Castillo}
\thanks{Federico Castillo is is partially supported by FONDECYT Regular Grant \#1221133.} 
\address{Federico Castillo, Departamento de Matem\'aticas, Pontificia Universidad Cat\'olica de Chile, Santiago, Chile.}
\email{federico.castillo@mat.uc.cl}
\author{Fu Liu}
\thanks{Fu Liu is partially supported by a grant from the Simons Foundation \#426756 and an NSF grant \#2153897-0.} 
\address{Fu Liu, Department of Mathematics, University of California, Davis, One Shields Avenue, Davis, CA 95616 USA.}
\email{fuliu@math.ucdavis.edu}
\begin{document}
\title{The permuto--associahedron Revisited}

\begin{abstract}
	A classic problem connecting algebraic and geometric combinatorics is the realization problem: given a poset, determine whether there exists a polytope whose face lattice is the poset. In 1990s, Kapranov defined a poset as a hybrid between the face poset of a permutohedron and that of an associahedron, and he asked whether this poset is realizable. Shortly after his question was posed, Reiner and Ziegler provided a realization.
	Based on our previous work on the nested braid fan, we provide in this paper a different realization of Kapranov's poset by constructing the vertex set and the normal fan of a permuto-associahedron simultaneously.
	
\end{abstract}	

\maketitle

\section{Introduction}

A \emph{polytope} is the convex hull of finitely many points in an Euclidean space. Equivalently, a polytope can also be defined as a bounded solution set of a finite system of linear inequalities. Either definition provides a direct geometric embedding of a given polytope. However, sometimes people are more interested in combinatorial properties of a polytope, which is captured by its face poset. The \emph{face poset} of a polytope $P$, denote $\FL(P),$ is the poset of all faces of $P$ ordered by inclusion. This leads to a classic question: given a poset $\cF$, determine whether there exists a polytope $P$ such that $\cF= \FL(P)$. 
If the answer is yes, we say $\cF$ is \emph{realizable}, and $P$ is a \emph{realization} of $\cF.$ We say a polytope arises or is defined \emph{abstractly} if it was initially constructed as an answer to a realization problem.

Two of the most studied polytopes in geometric combinatorics are the \emph{permutohedron} and the \emph{associahedron}, where the former was constructed by providing a geometric embedding, and the latter arised abstractly.
Several generalizations have been explored and developed; %\cite{chapoton_fomin, fomin_reading, pilaud_pons, lange_pilaud}.
we mention a few: Chapoton and Fomin \cite{chapoton_fomin}, Fomin and Reading \cite{fomin_reading}, Pilaud and Pons \cite{pilaud_pons}, and Lange and Pilaud \cite{lange_pilaud}.
Particularly relevant for the present paper is the work of Gelfand, Kapranov, and Zelevinsky \cite{GKZ}, and Postnikov \cite{bible}.
We also mention the work of Reading \cite{reading}, Stella \cite{stella}, and subsequent generalizations 
by Hohlweg, Lange, and Thomas \cite{hohlweg2011permutahedra}, and by Hohlweg, Pilaud, and Stella \cite{hohlweg2018polytopal}.

\subsection{Motivation: Realizing Kapranov's poset}
The main purpose of this paper is to give a realization of a poset defined by Kapranov \cite{kapranov} as a hybrid between the face poset of a permutohedron and that of an associahedron. (See Definition \ref{defn:kap}.)	
Before giving more details about Kapranov's poset, we give a brief introduction to permutohedra and associahedra.

\subsubsection*{Permutohedron.} 

A $d$-dimensional \emph{permutohedron} or a \emph{$d$-permutohedron} is the convex hull of all coordinate permutations of a fixed (generic) point in $\R^{d+1}$. After being first introduced and studied by Schoute in 1911 \cite{schoute}, the family of permutohedra naturally appeared in many different fields of mathematics.
They can be described as the set of diagonal vectors of hermitian matrices with a fixed spectrum
\cite[Chapter II.6]{barvinokconvex},
as simple zonotopes \cite[Section 7]{zie} or as the Newton polytopes of the Schur polynomials \cite{bayer}.	
It is known that the face poset of a permutohedron is the poset of ordered set partitions \cite[Chapter VI, Proposition 2.2]{barvinokconvex}.

\subsubsection*{Associahedron} 

A $d$-dimensional \emph{associahedron} or a \emph{$d$-associahedron} is a polytope defined by the following property: its vertices $v_B$ are in bijection with full bracketings $B$ on $(\ell_1 \ast \ell_2 \ast \dots \ast \ell_{d+2}),$ and two vertices $v_B$ and $v_{B'}$ form an edge if and only if the bracketings $B$ and $B'$ are related by a single application of the associative law. 
This description of the associahedron can be translated into an equivalent description in terms of complete binary threes, using a connection between full bracketings and complete binary trees. (See Remark \ref{rem:associativity} for this connection.)
We want to also mention another well-known equivalent but different way of defining a $d$-associahedron abstractly: its vertices $v_T$ are in bijection with triangulations $T$ of a $(d+3)$-gon and two vertices $v_T$ and $v_{T'}$ form an edge of if and only if the triangulations $T$ and $T'$ differ by a flip.
%Both descriptions above only define vertices and edges of associahedra.
All three descriptions above only define vertices and edges of associahedra.
A complete description of the face poset of the associahedron will be given in Definition \ref{def:poset} using the language of trees. %(which by Remark \ref{rem:associativity} has a clear connection to the description involving bracketings).
We remark that it is not a coincidence that full bracketings, complete binary trees, and triangulations of a polygon all belong to Catalan families. 

The associahedron was initially defined abstractly by Stasheff \cite{stasheff_1963} and for a while whether there exists a geometric realization was an open problem.
See \cite{ceballos_santos_ziegler} for an historical account.
After decades of insights by many mathematicians, several realizations of the associahedron have been found. Below we mention two that are relevant to this paper: 

\begin{enumerate}[leftmargin=*, label=(\alph*)]
	\item Gelfand, Kapranov and Zelevinsky \cite[Chapter 7]{GKZ} provided a realization by considering regular triangulations of polytopes of arbitrary dimension.
	This realization was further generalized by Billera and Sturmfels \cite{billera1992fiber} in their construction of fiber polytopes.
	\item \label{itm:Loday} Loday gave a realization by providing explicit coordinates for vertices of associahedra \cite{loday} and showed that this realization is a deformation of the regular permutohedron.
	His construction recovers the realization of Stasheff and Shnider \cite[Appendix B]{stasheff_1995} of associahedra that proceeds by truncating faces of a standard simplex.
	Later Postnikov \cite{bible} showed that Loday's associahedron can be expressed as a Minkowski sum of simplices.

\end{enumerate}

\subsubsection*{Permuto-associahedron}

Motivated by providing a geometric proof for MacLane's coherence theorem for associativities and commutativities in monoidal categories \cite{maclane}, Kapranov constructed a poset whose elements are ordered set partitions with bracketings and ordered by either removing bracketings or merging blocks. (See Definition \ref{defn:kap} for a precise definition for this poset.)
He then showed that his poset can be realized as a CW-ball.
Using this, he provided a short proof for MacLane's coherence theorem.
In the introduction of \cite{kapranov}, Kapranov asked a natural question: does his poset have a geometric realization as a polytope?
Shortly after Kapranov's question was posed, Reiner and Ziegler \cite{reiner_ziegler} gave an affirmative answer with an explicit construction, using Gelfand, Kapranov, and Zelevinsky's realization of the associahedron mentioned above.  
A second realization was obtained by Gaiffi \cite{gaiffi}.

Reiner-Ziegler's \cite{reiner_ziegler} and Gaiffi's \cite{gaiffi} work are both related to the construction that will be given in this paper, but in different ways. Our construction is connected to Reiner and Ziegler's topological proof for the result that Kapranov's poset is a CW-ball, but is very different from their geometric realization. 
%Meanwhile, the polytopes we construct are a slight generalization of Gaiffi's; in particular they have the same normal fan.
Meanwhile, the polytopes we construct have the same normal fan as Gaiffi's.
However, the constructed families of polytopes are different and more importantly our approaches are different:
Whereas his starting point is the work of Stasheff and Shnider \cite{stasheff_1995}, ours is Loday's \cite{loday}.
In Section \ref{sec:comparison}, we will compare our construction with both Reiner-Ziegler's and Gaiffi's, discussing both similarities and differences.

\subsection{Our construction} 

In our previous work \cite{def-cone}, we have defined and studied a family of polytopes called \emph{nested permutohedra},
which interpolate the structures of two permutohedra of consecutive dimensions.
In the present paper we use the tools and ideas developed in \cite{def-cone} to construct a permuto-associahedron as a \emph{deformation} of a nested permutohedron. In other words, we obtain a permuto-associahedron by altering the inequality description of a nested permutohedron without overrunning any vertex.
We call our realization the \emph{nested permuto-associahedron}.

Our main strategy for constructions in both this article and \cite{def-cone} involves a primal/dual argument which we lay out below: 
Suppose we want to construct a polytope whose vertices are in bijection with a certain set $S$ (e.g., if we try to realize a poset $\cF$, then $S$ is the set of rank-$1$ elements of $\cF$). Then we do the following steps:
\begin{enumerate}[leftmargin=*]
	\item Define a point $v_s$ for each $s\in S$. 
	\item Define a cone $\sigma_s$ for each $s\in S$, and let $\cN$ be the fan induced by the set $\{\sigma_s\}.$ 
	(If we want to realize a poset $\cF$, we need to make sure that the face poset of $\cN$ is dual to the poset $\cF$.) 
	\item Verify that the set $\{v_s\}$ and the set $\{\sigma_s\}$ ``match''. (See the hypothesis in Lemma \ref{lem:main_tool} for a precise statement.)
\end{enumerate}
After the above procedure, particularly after the last verification step, we can immediately conclude that the set $\{v_s\}$ constructed in step (1) and the fan $\cN$ constructed in step (2) are the vertex set and the normal fan respectively of the desired polytope. Moreover, we can immediately apply Lemma \ref{lem:det-ineq} to obtain an inequality description of the constructed polytope, which is another benefit of our construction strategy.

In this paper, we first describe a realization of the associahedron using the 3-step method outlined above. Explicit coordinates for Step (1) are given as a generalization of Loday's construction, and the cones in Step (2) are given by a union of \emph{braid cones} \cite{faces}. 
We then combine this realization with our previous results on nested permutohedra \cite{def-cone} to give a realization of Kapranov's poset which we call \emph{nested permuto-associahedra}.

\begin{figure}[ht]
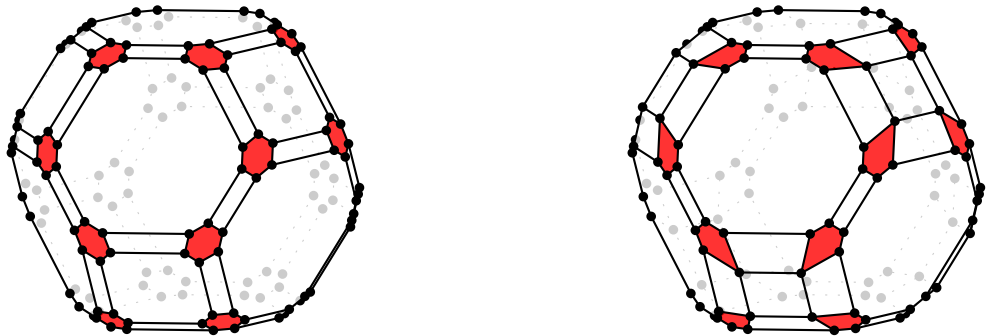

	\centering
	\begin{subfigure}{.5\textwidth}
		\centering
		\input{tikz/nested3.tex}
		
	\end{subfigure}%
	\begin{subfigure}{.5\textwidth}
		\centering
		\input{tikz/permasso3.tex}
		
	\end{subfigure}
	\caption{A side by side comparison of a nested permutohedron (on the left) and a nested permuto-associahedron (on the right).}
	\label{fig:permasso}
\end{figure}

We finish this part by mentioning one more related construction of polytopes. 
Recently Baralic, Ivanovic, and Petric \cite{baralic_ivanovic_petric} constructed a \emph{simple} permuto-associahedron. Since Kapranov's permuto-associahedron which is not simple, these two families of permuto-associahedra clearly have different combinatorial structures.

\subsection*{Organization of the paper} 
After reviewing basic preliminary material in Section \ref{sec:prelim} we proceed, in Sections \ref{sec:trees} and \ref{sec:asso}, to describe a realization of the associahedron.
Note that Section \ref{sec:trees} serves as a preliminary section for Section \ref{sec:asso}. In particular, \S \ref{subsec:sbt}, \S \ref{subsec:asso} and \S \ref{subsec:labeling} introduce basic combinatorial objects that have been used in various other realizations of the associahedron, experts on associahedra may skip these parts.
In Sections \ref{sec:orderedtrees} and \ref{sec:permasso}, we use the results of previous sections together with results on nested permutohedra \cite{def-cone} to realize Kapranov's poset.  

Finally, in Section \ref{sec:comparison}, we compare our construction with Reiner-Ziegler's and Gaiffi's realizations.

\subsection*{Acknowledgements} 
We thank Vic Reiner for many insightful comments and the anonymous referees for suggestions to improve the presentation. 
This project began when the authors were attending the program ``Geometric and Topological Combinatorics'' at the Mathematical Sciences Research Institute in Berkeley, California, during the Fall 2017 semester, and they were partially supported by the NSF grant DMS-1440140.
The first named author is also partially supported by FONDECYT Grant 1221133, and the second named author is partially supported by a grant from the Simons Foundation \#426756 and an NSF grant \#2153897-0.

\section{Preliminaries}
\label{sec:prelim}

Recall that $[n]$ denotes the set $\{1, 2, \dots, n\}$ for any positive integer $n$ and $[a,b]$ denotes the \emph{integer interval} $\{z \in \Z: a \le z \le b\}$ for any integers $a$ and $b$ satisfying $a-1 \le b$. By convention $[a, a-1]$ denotes the \emph{empty integer interval}.

\subsection{Preorder and preposet}\label{subsec:preorder}
Let $\A$ be a finite set. A \emph{preorder} $\preceq$ on $\A$ is a binary relation that is both reflexive and transitive.  
If $i\preceq j$ and $j\preceq i$, we write $i \equiv j$. The relation $\equiv$ is an equivalence relation on $\A$, and thus it decomposes $\A$ into equivalence classes. 
We denote by $\hi$ the equivalence class of $i$ and $\A/_{\equiv}$ the set of equivalence classes. 
A \emph{preposet} is an ordered pair $(\A,\preceq)$ where $\preceq$ is a preorder on $\A$. A \emph{poset} is a preposet such that $i\equiv j$ if and only if $i=j$.
Note that if a preorder $\preceq$ is antisymmetric, which implies that $i\equiv j$ if and only if $i=j$, then $\preceq$ is a partial order on $\A$, and the preposet $(\A, \preceq)$ is a poset. (See \cite[Chapter 3]{ec1} for concepts related to partial orders and posets.)

One sees that a preorder $\preceq$ on $\A$ induces a partial order on $\A/_\equiv$ in which $\hi \preceq \hj$ if $i \preceq j$ in $\A$. The poset  $(\A/_\equiv, \preceq)$ and the preposet $(\A, \preceq)$ are closely related. Hence, we can conveniently extend many concepts for the former to the latter:
A \emph{covering relation} $i\cover j$ in the preposet $(\A, \preceq)$ is a pair of elements $(i,j)$ such that %$\hi \cover \hj$ 
$\hi$ is covered by $\hj$ in the poset $(\A/_\equiv, \preceq)$. The \emph{Haase diagram} of a preposet $(\A, \preceq)$ is the Haase diagram of the poset $(\A/_\equiv, \preceq)$ except that for convenience when we mark vertices with equivalence classes $\hi$, we omit the parentheses around sets, see Figure \ref{fig:partitionorder} for an example. 

Suppose $\preceq_1$ and $\preceq_2$ are two preorders on $\A$. We say the preorder $\preceq_1$ is a \emph{contraction} of the preposet $\preceq_2$ if the Haase diagram of the former is obtained by contracting some edges of the Haase diagram of the latter and merging the corresponding equivalence classes.

An \emph{order-preserving} map from a preposet $(\A_1, \preceq_1)$ to another preposet $(\A_2, \preceq_2)$ is a bijection $f: \A_1 \to \A_2$ such that $f(i) \preceq_2 f(j)$ whenever $i \preceq_1 j$ for any $i,j \in \A_1.$ 
An order-preserving map is an \emph{isomorphism} if it is invertible and its inversion is order-preserving as well. 
Two preposets are \emph{isomorphic} if there exists an isomorphism between them.

Note that any subset $\mathrm{C}$ of $\Z$ (or $\R$) as is totally ordered with respect to $\le$ and thus can be considered as a preposet (or a poset); we use the letter $\mathrm{C}$ alone to indicate the preposet $(\mathrm{C}, \le)$ for simplicity.

Suppose $(\A, \preceq)$ is a poset.
The \emph{dual} of $(\A, \preceq)$ is the poset $(\A, \preceq^*)$ where $i\preceq^* j$ if and only if $j\preceq i$.
An order-preserving map from the poset $(\A, \preceq)$ to the set $\{1, 2, \dots, |\A|\}$ is called a \emph{linear extension} of $(\A, \preceq).$ We denote by $\LL[\A, \preceq]$ the set of linear extensions of $(\A, \preceq)$.

A poset $(\A,\preceq)$ is \emph{graded} if there exists a function $\rho:\A\to\Z_{\geq 0}$ such that $\rho(i)=0$ for every minimal element of the poset and $\rho(j)=\rho(i)+1$ whenever $i\cover j$ is a covering relation. 
We call $\rho$ (which is uniquely defined) the \emph{rank function} of $(\A, \preceq)$, and $\rho(i)$ the \emph{rank} of $i$ for each element $i.$ The \emph{rank} of a graded poset $(\A, \preceq)$ is defined as $r(\A,\preceq):=\max_{i\in \A} \rho(i)$.

We denote by $\hat{0}$ and $\hat{1}$ the minimum and maximum of a poset (if they exist). %For a poset with minimum (resp. maximum).

\subsection*{Our setup:} In our paper, we will mostly fix $\A =[n]$ where $n$ is either $d$ or $d+1$. Hence, when $n$ is fixed, the preorder $\preceq$ on $[n]$ is the only variable that changes. Whenever it is clear that $\preceq$ is a partial order on $\A=[n],$ we will omit $\A$ and just write $\LL[\preceq]$ for the set of linear extensions of $([n], \preceq)$. Note that $\LL[\preceq]$ is a subset of the symmetric group $\fS_{n}.$

\subsection{Polytopes and fans}

Let $V \subseteq \R^D$ be a $d$-dimensional vector space in the $D$-dimensional Euclidean space and $W$ is the dual space of $V$ which consists of all linear functionals on $V.$ Thus, we may consider $W$ is a quotient space of $\R^D$, and the perfect pairing between $V$ and $W:$ $\langle \cdot, \cdot \rangle: W \times V \to \R$ is just the dot product on $\R^D.$ 

Let $U \subseteq \R^D$ be an affine space that is a translation of $V.$  A \emph{polyhedron} $P \subseteq U$ is the solution set of a finite set of linear inequalities:
\begin{equation}\label{ineq}
	P = \{ \x \in U \ : \ \langle \a_i, \x \rangle \le b_i, \ i\in I \},
\end{equation}
A \emph{face} of a polyhedron is a subset $F\subseteq P$ such that there exists $\w\in W$ such that \[F=\left\{\x\in P~:~ \langle \w, \x \rangle \geq \langle \w, \y \rangle, \quad \forall \y \in P \right\}.\]
An inequality $\langle \a,\x\rangle \leq b$ is \emph{facet-defining} on $P$ if the corresponding equality defines a facet of $P$, i.e., $\{\x\in P : \langle \a,\x\rangle = b\}$ is face of $P$ of dimension $\dim(P)-1.$

Suppose a polyhedron $P$ is defined by \eqref{ineq}. We say \eqref{ineq} is a \emph{facet-defining inequality description} for $P$ if each inequality in \eqref{ineq} is facet-defining. (However, it is possible multiple inequalities determine a same facet.) We say \eqref{ineq} is a \emph{minimal inequality description} if $P$ has exactly $|I|$ facets. Thus, when \eqref{ineq} is minimal, the equality obtained for each $i \in I$ determines exactly one facet of $P.$ 
A \emph{polytope} is a bounded polyhedron. A $k$-dimensional polytope is \emph{simple} if each vertex %lies on exactly $k$ facets.
is incident to exactly $k$ edges.
A \emph{(polyehdral) cone} is a polyhedron defined by homogeneous linear inequalities. A cone is \emph{pointed} if it does not contain a line. A $k$-dimensional cone is \emph{simplicial} if it is spanned by exactly $k$ (linearly independent) rays. Note that any simplicial cone is pointed.

By convention we always consider $\emptyset$ to be a face of a polyhedron $P$.
The set of all faces of $P$ partially ordered by inclusion forms the \emph{face poset} $\FL(P)$ of $P$. 
A \emph{fan} in $W$ is a collection $\Sigma$ of cones that is a simplicial complex. The collection together with the partial order given by inclusion forms a poset $\mathcal{F}(\Sigma)$ called its \emph{face poset}.
A fan $\Sigma$ is \emph{simplicial} if every cone in it is simplicial.  A fan $\Sigma$ in $W$ is \emph{complete} if the union of its cones is $W$.
The following definition gives a standard example of complete fans that arises from polytopes. 

\begin{defn}\label{defn:normal}
	Suppose $V, W$ and $U$ are given as above, and $P \subset U$ is a polytope.
	Given a nonempty face $F$ of $P$, the \emph{normal cone} of $P$ at $F$ 
	is defined to be
	\[
	\ncone(F, P) := \left\{ \w \in W \ : \ \langle \w, \y \rangle \geq \langle \w, \y' \rangle, \quad \forall \y \in F,\quad \forall \y' \in P \right\}.
	\]
	Therefore, $\ncone(F,P)$ is the collection of linear functionals $\w$ in $W$ such that $\w$ attains maximum value at $F$ over all points in $P.$ 
	The \emph{normal fan} of $P$, 
	denoted by $\Sigma(P)$, is the collection of all normal cones of $P$ as we range over all nonempty faces of $P$.

\end{defn}

\begin{lem}\label{lem:anti}
	The map $F\mapsto \ncone(F,P)$ for nonempty faces $F$ induces a poset isomorphism from $\FL(P)\setminus\{\emptyset\}$ to the dual
	poset of $\mathcal{F}(\Sigma(P))$.
\end{lem}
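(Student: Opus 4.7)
The plan is to establish three properties of the map $\Phi : F \mapsto \ncone(F,P)$ from nonempty faces of $P$ to cones of $\Sigma(P)$: order-reversal, bijectivity, and that the inverse is also order-reversing. Surjectivity of $\Phi$ onto the cones of $\Sigma(P)$ is built into Definition~\ref{defn:normal}, so only injectivity requires work.

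The order-reversal in one direction is immediate from the definition: if $F \subseteq G$ are nonempty faces of $P$ and $\w \in \ncone(G,P)$, then $\langle \w, y\rangle \geq \langle \w, y'\rangle$ for every $y \in G$ and every $y' \in P$; restricting $y$ to $F \subseteq G$ shows $\w \in \ncone(F,P)$, so $\ncone(G,P) \subseteq \ncone(F,P)$.

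For injectivity and the converse order relation, my strategy is to introduce the auxiliary assignment $\w \mapsto F_\w$, where $F_\w := \{x \in P : \langle \w, x\rangle = \max_{y\in P}\langle \w, y\rangle\}$ is the face of $P$ where $\w$ attains its maximum. By definition, $\w \in \ncone(F,P)$ if and only if $F \subseteq F_\w$. The key claim is that for every nonempty face $F$, every $\w$ in the relative interior of $\ncone(F, P)$ satisfies $F_\w = F$. Granting this, injectivity is immediate: if $\ncone(F,P) = \ncone(G,P)$, then a relative-interior witness $\w$ gives $F = F_\w = G$. The converse order direction also follows: if $\ncone(G,P) \subseteq \ncone(F,P)$, take $\w$ in the relative interior of $\ncone(G,P)$ so that $G = F_\w$, and then $\w \in \ncone(F, P)$ forces $F \subseteq F_\w = G$.

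The main obstacle is the key claim that $F_\w = F$ on $\operatorname{relint}(\ncone(F,P))$. The inclusion $F \subseteq F_\w$ holds for any $\w \in \ncone(F, P)$, so the work lies in the reverse inclusion. Assuming $F \subsetneq F_\w$ places $\w$ inside $\ncone(F_\w, P)$, which by the forward order-reversal is a subset of $\ncone(F, P)$; a standard perturbation argument shows this inclusion is strict and that $\ncone(F_\w, P)$ in fact lies in the relative boundary of $\ncone(F, P)$, contradicting our choice of $\w$ in the relative interior. This is the standard face--fan correspondence for polytopes, and the only care needed is to invoke the structural fact, implicit in Definition~\ref{defn:normal}, that any two normal cones meet along a common face.
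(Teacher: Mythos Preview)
The paper states this lemma without proof, treating it as a standard background fact about polytopes and their normal fans. Your sketch is the standard argument and is correct in outline.

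One small comment: the ``standard perturbation argument'' you invoke for the key claim can be made concrete without any hand-waving. Since $F$ is a face, choose $\w_0$ with $F_{\w_0}=F$; then $\w_0\in\ncone(F,P)$ but $\w_0\notin\ncone(F_\w,P)$ whenever $F\subsetneq F_\w$, so the inclusion $\ncone(F_\w,P)\subseteq\ncone(F,P)$ is strict. Combined with the fan property (which the paper asserts just before Definition~\ref{defn:normal}), $\ncone(F_\w,P)$ is then a proper face of $\ncone(F,P)$ and hence lies in its relative boundary, giving the contradiction you want. With that made explicit, the argument is complete.
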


If $Q$ is a polytope such that $\Sigma(Q)$ is a coarsening of $\Sigma(P)$, i.e., if every cone in the former is a union of cones in the latter, we say that $Q$ is a \emph{deformation} of $P$.

As we mentioned above, $\Sigma(P)$ is always a complete fan in $W$. Moreover, any fan that is a normal fan of a polytope is called a \emph{projective} fan. Once we know that a projective fan $\Sigma$ is the normal fan of a polytope, one can check that the polytope is full-dimensional if and only if $0 \in \Sigma,$ i.e., all cones in $\Sigma$ are pointed.

Given a fan $\Sigma$ in $W,$ the set $\MM \subseteq \Sigma$ of maximal cones (in terms of dimension) determines $\Sigma$. More precisely, the set of cones in $\MM$, together with all their faces, forms the fan $\Sigma.$ In this case, we say $\Sigma$ is \emph{induced} by $\MM.$ Therefore, we often focus on the description of the maximal cones of a fan, which has the property of being the conic dissection of $W.$ 

\begin{defn}\label{def:dissection}
	A \emph{conic dissection} of $W$ is a set $\MM$ of full-dimensional cones such that the union of the cones in $\MM$ is equal to $W$, and for any distinct $\sigma_1, \sigma_2 \in \MM,$ their relative interiors $\sigma_1^\circ$ and $\sigma_2^\circ$ have no intersection.
	We say a conic dissection $\MM$ is \emph{pointed} (and \emph{simplicial} resp.) if all the cones in $\MM$ are \emph{pointed} (and \emph{simplicial} resp.)
\end{defn}

We remark that a conic dissection does not necessarily induce a fan, since cones in the dissection may not intersect in proper faces.

The primal/dual argument in the following lemma was used in the proof of Proposition 3.5 of our previous work \cite{def-cone}. We summarize it here since it will be our main tool in verifying the constructions of associahedra and permuto-associahedra.  
\begin{lem}\label{lem:main_tool}
	Let $\MM = \{\sigma_1,\dots,\sigma_k\}$ be a conic dissection of $W$ and $\{\v_1,\dots,\v_k\}\subseteq  U$ a set of points such that for each $i=1,\dots,k$ we have
	\begin{equation}\label{eq:max}
		\langle \w,\v_i\rangle > \langle \w,\v_j\rangle\qquad \forall \w\in \sigma_i^\circ \text{ and } j \neq i.
	\end{equation} 
	Let $P$ be the polytope defined by $P:=\conv\{\v_1,\dots,\v_k\}$. Then the followings are true: 
	\begin{enumerate}
		\item The set $\{\v_1, \dots, \v_k\}$ is the vertex set of $P.$
		\item For each $i=1,2,\dots,k$, we have $\sigma_i=\ncone(\v_i,P).$
	\end{enumerate}
	As a consequence, the conic dissection $\MM$ induces the normal fan $\Sigma(P)$ of $P$, which is a complete projective fan in $W$.
	Moreover, if $\MM$ is pointed, then $0 \in \Sigma(P)$ and thus $P$ is full-dimensional in $U.$
\end{lem}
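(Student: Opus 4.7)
The plan is to verify the two numbered claims in sequence, and then derive the normal-fan and full-dimensionality statements as immediate consequences.

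For item (1), I would use that each $\sigma_i$ is full-dimensional (part of the definition of a conic dissection), so $\sigma_i^\circ$ is nonempty; picking any $\w_* \in \sigma_i^\circ$, the hypothesis makes $\v_i$ the unique maximizer of $\langle \w_*, \cdot \rangle$ over the finite set $\{\v_1, \ldots, \v_k\}$, hence over $P = \conv\{\v_1, \ldots, \v_k\}$. This exhibits $\{\v_i\}$ as a face of $P$, so $\v_i$ is a vertex; conversely, every vertex of $P$ must appear in the generating set, so the vertex set of $P$ is exactly $\{\v_1, \ldots, \v_k\}$.

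The heart of the proof is item (2), which I would split into the two inclusions. For $\sigma_i \subseteq \ncone(\v_i, P)$, the required weak inequalities $\langle \w, \v_i \rangle \geq \langle \w, \v_j \rangle$ are immediate (strictly) from the hypothesis on $\sigma_i^\circ$; for $\w \in \sigma_i$ on the boundary, I would approximate $\w$ by a sequence in $\sigma_i^\circ$ (dense in $\sigma_i$, since $\sigma_i$ is a full-dimensional convex set) and pass to the limit.

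The reverse inclusion $\ncone(\v_i, P) \subseteq \sigma_i$ is the main obstacle. My plan is to prove the topological-interior version first: assume $\w \in \ncone(\v_i, P)^\circ$, and suppose for contradiction that $\w \notin \sigma_i$. Since $\ncone(\v_i, P)^\circ$ is open and $\sigma_i$ is closed, $\w$ has a neighborhood $N \subseteq \ncone(\v_i, P)^\circ \setminus \sigma_i$; the cones of $\MM$ cover $W$, so any $\w'' \in N$ lies in some $\sigma_j$ with $j \neq i$, and the forward inclusion already proved gives $\w'' \in \ncone(\v_j, P)$, hence $\langle \w'', \v_j \rangle \geq \langle \w'', \v_i \rangle$. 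But membership in $\ncone(\v_i, P)^\circ$ forces $\v_i$ to be the \emph{unique} maximizer of $\w''$ on $P$, so $\langle \w'', \v_i \rangle > \langle \w'', \v_j \rangle$, a contradiction. Therefore $\ncone(\v_i, P)^\circ \subseteq \sigma_i$, and taking closures (observing that $\ncone(\v_i, P)$ is full-dimensional because it contains the full-dimensional $\sigma_i$, hence equals the closure of its interior) yields $\ncone(\v_i, P) \subseteq \sigma_i$.

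Finally, the remaining assertions are read off: once $\sigma_i = \ncone(\v_i, P)$ for each $i$, the set $\MM$ is exactly the collection of maximal cones of $\Sigma(P)$ and therefore induces this complete projective fan. If in addition $\MM$ is pointed, then each $\ncone(\v_i, P)$ is pointed, which is equivalent to the lineality space of $\Sigma(P)$ being trivial, i.e., to $P$ being full-dimensional in $U$.
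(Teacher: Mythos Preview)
Your proof is correct and follows essentially the same approach as the paper. The paper's argument is considerably more compressed: after establishing $\sigma_i \subseteq \ncone(\v_i,P)$ for each $i$, it simply observes that both $\{\sigma_i\}$ and $\{\ncone(\v_i,P)\}$ are conic dissections of $W$, so the containments must all be equalities. Your reverse-inclusion argument unpacks exactly this observation into an explicit contradiction; note, incidentally, that the open neighborhood $N$ is unnecessary---you can run the contradiction directly on $\w$ itself, since the covering property already places $\w$ in some $\sigma_j$ with $j\neq i$.
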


\begin{proof}
	For each $i=1,2,\dots, k,$ it follows from condition \eqref{eq:max} that $\v_i$ does not lie in $\conv(\v_j \ : \ j \neq i),$ and thus $\v_i$ must be a vertex of $P.$ Hence, (1) follows. 
	Next, condition \eqref{eq:max} also implies that for each $i$  we have $\sigma_i \subseteq  \ncone(\v_i, P)$.
	However, since both $\{\sigma_i\}$ and $\{\ncone(v_i,P)\}$ are conic dissections of $W$, we must have $\sigma_i=\ncone(\v_i,P).$ %the equality hold on each inclusion.
\end{proof}

\begin{lem}\label{lem:det-ineq}
	%Suppose the hypothesis of Lemma \ref{lem:main_tool} holds. We further assume that $\MM$ is pointed and 
	Suppose $P$ is a full-dimensional polytope in $U$ with vertex set $\{\v_1, \dots, \v_k\},$ and $\sigma_i=\ncone(\v_i,P)$ for each $1 \le i \le k.$
	Let $\{\rho_1, \rho_2, \dots, \rho_m\}$ be the set of one dimensional cones in $\Sigma(P)$ and for each $1 \le j \le m,$ let $\n_j$ be a nonzero vector in the cone $\rho_j$ (or equivalently a generator for $\rho_j$). Then the polytope $P$ has the following minimal inequality description: 
	\[ 
	\setlength{\abovedisplayskip}{0pt}
	P = \left\{\x \in U \ :  \ \langle \n_j,\x \rangle \leq \max_{1 \le i \le k} \langle \n_j ,\v_i \rangle, \quad 1 \le j \le m \right\}. \]
	%	  where each inequality defines a facet of $P.$
	%	
	Moreover, for each $1 \le j \le m,$ if we choose $i_j$ such that $\rho_j \subseteq  \sigma_{i_j}$, then \[\max_{1 \le i \le k} \langle \n_j ,\v_i \rangle = \langle \n_j ,\v_{i_j} \rangle.\]
\end{lem}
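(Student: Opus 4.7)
The plan is to leverage the anti-isomorphism from Lemma \ref{lem:anti}: the one-dimensional cones $\rho_j$ of $\Sigma(P)$ correspond bijectively to the facets $F_j$ of $P$. Since $P$ is full-dimensional, this gives exactly $m$ facets, and each $\n_j$ (being a nonzero element of $\rho_j$) is an outer normal to $F_j$.

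First I would unpack what the normal-cone correspondence says at the facet level. By Lemma \ref{lem:anti}, there is a facet $F_j$ of $P$ with $\ncone(F_j,P) = \rho_j$. By Definition \ref{defn:normal}, for any $\w \in \rho_j^\circ$ we have $F_j = \{\x \in P : \langle \w,\x\rangle = \max_{\y \in P}\langle \w,\y\rangle\}$. Taking $\w = \n_j$ and using that the maximum of a linear functional over $P$ is attained at a vertex, we obtain
\[
F_j \;=\; \bigl\{ \x \in P : \langle \n_j,\x\rangle = \textstyle\max_{1 \le i \le k}\langle \n_j,\v_i\rangle \bigr\},
\]
so the inequality $\langle \n_j,\x\rangle \le \max_i \langle \n_j,\v_i\rangle$ is facet-defining for $P$ and is satisfied by every point of $P$.

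Next I would assemble the inequality description. Since any full-dimensional polytope equals the intersection of the closed half-spaces determined by its facet-defining inequalities (a standard fact, e.g.\ \cite[Chapter 2]{zie}), and since $\{F_1,\dots,F_m\}$ is the complete list of facets, we get
\[
P \;=\; \bigl\{ \x \in U : \langle \n_j,\x\rangle \le \textstyle\max_{1 \le i \le k}\langle \n_j,\v_i\rangle,\ 1 \le j \le m \bigr\}.
\]
Minimality is automatic because each $\rho_j$ arises from a distinct facet $F_j$, so no inequality is redundant.

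For the ``moreover'' clause, the hypothesis $\rho_j \subseteq \sigma_{i_j}$ together with $\sigma_{i_j} = \ncone(\v_{i_j},P)$ shows that $\n_j$ attains its maximum on $P$ at $\v_{i_j}$; in particular $\langle \n_j,\v_{i_j}\rangle \ge \langle \n_j,\v_i\rangle$ for every $i$, which gives $\max_i \langle \n_j,\v_i\rangle = \langle \n_j,\v_{i_j}\rangle$. I do not anticipate any serious obstacle here: the proof is essentially a direct application of Lemma \ref{lem:anti} combined with the standard fact that a full-dimensional polytope is determined by its facet inequalities. The only point needing a little care is justifying that the one-dimensional cones of $\Sigma(P)$ are in bijection with the facets of $P$ (which is exactly what the rank-reversing isomorphism of Lemma \ref{lem:anti} provides), and that full-dimensionality of $P$ is what guarantees each $\rho_j$ is genuinely one-dimensional and therefore corresponds to a codimension-one face.
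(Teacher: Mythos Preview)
The paper states this lemma without proof, treating it as a standard fact about polytopes and their normal fans. Your argument is correct and is exactly the expected justification: use the anti-isomorphism of Lemma~\ref{lem:anti} to identify the rays $\rho_j$ with the facets $F_j$, observe that $\n_j$ (being nonzero in a one-dimensional cone, hence in its relative interior) is an outer normal to $F_j$, and then invoke the standard fact that a full-dimensional polytope equals the intersection of its facet half-spaces. The ``moreover'' clause follows directly from the definition of the normal cone at a vertex.
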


\subsection{Permutohedra and braid cones} In this paper, we always have $D=d+1$ where $D$ is the dimension of the ambient space $\R^D$ and $d$ is the dimension of the polytopes we consider. The $d$-dimensional vector space we use is $V_d = \{ \x \in \R^{d+1} \ : \langle\1, \x\rangle = 0\} \subseteq \R^{d+1}$ and its dual space is $W_d = \R^{d+1}/\1$, where $\1 = (1, 1, \dots, 1)$ denotes the all-one vector in $\R^{d+1}.$ 
%Note that the standard basis $\{\e_1,\dots,\e_{d+1}\}$ of $\R^{d+1}$ is a canonical spanning set for $W_d$ although it is not a basis.
%	
For $\balpha \in \R^{d+1}$, let 
\begin{equation}\label{eq:defnU}	
	\setlength{\abovedisplayskip}{0pt}
	U_d^\balpha :=  \left\{ \x \in \R^{d+1} \ : \langle\1, \x\rangle = \sum_{i=1}^{d+1} \alpha_i\right\}
\end{equation}
be a translation of $V_d.$ Our polytopes will be defined in these affine spaces. 

Given a strictly increasing sequence $\balpha= (\alpha_1,\alpha_2,\dots,\alpha_{d+1}) \in \R^{d+1}$, for any $\pi\in \fS_{d+1}$, we use the notation below, following \cite{def-cone}: 
\begin{equation}\label{eq:permuto_vertex}
	\setlength{\abovedisplayskip}{0pt}
	\setlength{\belowdisplayskip}{0pt}
	v_\pi^\balpha := \left(\alpha_{\pi(1)},\alpha_{\pi(2)},\dots, \alpha_{\pi({d+1})}\right) = \sum_{i=1}^{d+1} \alpha_{i} \e_{\pi^{-1}(i)}.
\end{equation}
Then we define the \emph{usual permutohedron} 
\[ \Perm(\balpha) = \conv\left(v_\pi^{\balpha} \ : \ \pi\in \fS_{d+1}\right) \subseteq U_d^\balpha.\]
It is well-known that $\Perm(\balpha)$ is a full-dimensional polytope in $U_d^\balpha,$ and it has the following minimal inequality description, writing $\e_S := \sum_{i \in S} \e_i$: 
\begin{equation}\label{eq:perm_ineqs}
	%\Perm(\balpha) = \left\{\x\in U_d^\balpha \ : \  \sum_{i\in I} x_i \geq \sum_{i=1}^{|I|} \alpha_i,\quad \forall \emptyset \neq I\subsetneq [d+1] \right\}.
	\setlength{\abovedisplayskip}{0pt}
	\Perm(\balpha) = \left\{\x\in U_d^\balpha \ : \ \langle \e_S, \x \rangle \leq \sum_{i=d+2-|S|}^{d+1} \alpha_i,\quad \forall \emptyset \neq S\subsetneq [d+1] \right\}.
\end{equation}
%in which each inequality is facet-defining.

A \emph{generalized permtuhohedron} is a deformation of a usual permutohedron $\Perm(\balpha)$ for some $\balpha$.

\begin{defn}\label{def:braid_cone}
	
	For each $\pi\in \fS_{d+1}$, we define the \emph{braid cone associated to $\pi$} to be:
	\begin{equation}\label{eq:braid_closed_defi}
		\sigma(\pi):=\{\w\in W_d \ :\ w_{\pi^{-1}(1)}\le w_{\pi^{-1}(2)} \le \dots \le w_{\pi^{-1}(d+1)}\}.
	\end{equation}
	Let $\MB_d := \{\sigma(\pi): \pi\in \fS_{d+1}\}$ be the collection of braid cones in $W_d.$
\end{defn}

One checks that the relative interior of $\sigma(\pi)$ is
\begin{equation}\label{eq:braid_open_defi}
	\sigma^\circ(\pi):=\{\w\in W_d \ :\ w_{\pi^{-1}(1)} < w_{\pi^{-1}(2)} < \dots < w_{\pi^{-1}(d+1)}\}.
\end{equation}
Thus, we clearly have the following:

\begin{lem}\label{lem:braid_dissect}
	The collection of braid cones $\MB_d = \{\sigma(\pi): \pi\in \fS_{d+1}\}$ forms a simplicial conic dissection of $W_d.$
\end{lem}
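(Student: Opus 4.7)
The plan is to unpack the definition of a conic dissection (Definition \ref{def:dissection}) and verify the two required properties, then separately verify simpliciality.

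First I would show that $\bigcup_{\pi\in\fS_{d+1}} \sigma(\pi) = W_d$. Given any $\w \in W_d$ with representative $(w_1,\dots,w_{d+1}) \in \R^{d+1}$, one can always sort the coordinates in non-decreasing order, i.e., choose some $\pi \in \fS_{d+1}$ with $w_{\pi^{-1}(1)} \le w_{\pi^{-1}(2)} \le \cdots \le w_{\pi^{-1}(d+1)}$, so $\w \in \sigma(\pi)$. Second, I would check that the relative interiors $\sigma^\circ(\pi)$ are pairwise disjoint. If $\w \in \sigma^\circ(\pi) \cap \sigma^\circ(\pi')$, then by \eqref{eq:braid_open_defi} the coordinates $w_1,\dots,w_{d+1}$ are pairwise distinct, and both $\pi$ and $\pi'$ must list them in the same strictly increasing order; since a strict total order on distinct reals uniquely determines the permutation, $\pi = \pi'$.

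Next I need to verify that each $\sigma(\pi)$ is simplicial, i.e., is a $d$-dimensional pointed cone generated by exactly $d$ linearly independent rays. The permutation action $\e_i \mapsto \e_{\pi(i)}$ on $\R^{d+1}$ descends to a linear automorphism of $W_d$ that sends $\sigma(\id)$ to $\sigma(\pi)$, so it suffices to check the case $\pi = \id$. The cone $\sigma(\id)$ is cut out of $W_d$ by the $d$ inequalities $w_{i+1}-w_i \ge 0$ for $i=1,\dots,d$. The linear functionals $x_i := w_{i+1}-w_i$ descend to $W_d$ (they annihilate $\1$) and together give an isomorphism $W_d \xrightarrow{\sim} \R^d$ under which $\sigma(\id)$ corresponds to the non-negative orthant, which is a $d$-dimensional simplicial cone.

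Since \eqref{eq:braid_closed_defi} exhibits $\sigma(\pi)$ as full-dimensional (it contains an open set of $W_d$) and the $\sigma(\pi)$'s have disjoint relative interiors and cover $W_d$, the collection $\MB_d$ satisfies Definition \ref{def:dissection}, and the simpliciality argument above upgrades it to a \emph{simplicial} conic dissection. I do not anticipate any real obstacle here: the key facts are elementary (sorting uniquely orders distinct reals; the dominance coordinates $w_{i+1}-w_i$ diagonalize the inequalities), and no deep fact about the braid arrangement beyond these is needed.
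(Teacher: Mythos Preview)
Your argument is correct. The paper does not give a proof of this lemma at all: it simply records the description \eqref{eq:braid_open_defi} of the relative interior and then states ``Thus, we clearly have the following,'' treating the result as immediate. Your proposal just spells out the obvious verification the paper omits, via the same ingredients (sorting the coordinates to cover $W_d$, uniqueness of the sorting permutation on strict inequalities for disjointness of interiors, and the difference coordinates $w_{i+1}-w_i$ to see simpliciality). One cosmetic point: the coordinate permutation carrying $\sigma(\id)$ to $\sigma(\pi)$ is $\e_i \mapsto \e_{\pi^{-1}(i)}$ rather than $\e_i \mapsto \e_{\pi(i)}$, but this is irrelevant to the logic since you only need that \emph{some} linear automorphism of $W_d$ identifies the two cones.
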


As an example of the usefulness of Lemma \ref{lem:main_tool}, we verified in \cite{def-cone} that the collection of braid cones $\MB_d$ and the set of points $\{v_\pi^\balpha: \pi \in \fS_{d+1}\}$ satisfy the hypotheses of the lemma. %Lemma \ref{lem:main_tool}. 
Consequently, we proved that $\MB_d$ induces the well-known \emph{braid fan} $\Br_d$, and that the braid fan $\Br_d$ is the normal fan of the usual permutohedron $\Perm(\balpha).$
Thus a polytope is a generalized permutohedron if and only if its normal fan coarsens the braid fan $\Br_d$ for some $d$. 

Finally, the face poset of a permutohedron has a nice combinatorial description.

\begin{defn}\label{defn:osp}
	We say the ordered tuple $\cS=(S_1,\dots,S_k)$ is an \emph{ordered (set) partition} of $[d+1]$ with \emph{$k$ blocks} if $S_1, \dots, S_k$ are $k$ disjoint sets whose union is $[d+1]$.
	We denote by $\sO_{d+1}$ the set of all ordered partitions of $[d+1]$ and by $\sO_{d+1,k}$ the set of all ordered partitions of $[d+1]$ with $k$ parts.
	
	We define a partial order $\trianglelefteq$ on the set $\sO_{d+1}\cup\{\hat{0}\}$ by declaring $\cS_1\trianglelefteq \cS_2$ if $\cS_1 \in \sO_{d+1}$ refines $\cS_2 \in \sO_{d+1}$ and $\hat{0}\trianglelefteq \cS$ for all $\cS\in\sO_{d+1}$. 
	We denote the poset $(\sO_{d+1}\cup\{\hat{0}\},\trianglelefteq)$ by $\OO_{d+1}$. 
\end{defn}

%	One can verify that $\OO_{d+1}$ is a graded poset of rank $d+1$. It has a unique minimal element $\hat{0}$ by definition. For each $1 \le k \le d+1$, the set $\sO_{d+1,k}$ consists of all elements of $\OO_{d+1}$ of rank $d+2-k$. In particular, the only element $([d+1])$ in $\sO_{d+1,1}$ is the unique maximal element of $\OO_{d+1}.$
The set $\sO_{d+1,d+1}$ of rank $1$ elements of $\OO_{d+1}$ is in bijection with $\fS_{d+1}.$ More precisely, for each permutation $\pi \in \fS_{d+1}$, we let 
\begin{equation}\label{equ:cSpi}
	\cS(\pi) := ( \{\pi^{-1}(1)\}, \{\pi^{-1}(2)\}, \ldots, \{\pi^{-1}(d+1)\}).
\end{equation}
to be the ordered set partition that corresponds to $\pi.$ Clearly, $\pi \mapsto \cS(\pi)$ is a bijection from $\fS_{d+1}$ to $\sO_{d+1,1}.$

\begin{notn}
	%		When we write examples of ordered partitions involving only single digit numbers, we will omit set parentheses and write numbers in each block together to make notation shorter. For example, $(\{1,2,3\},\{4\},\{5,6\})$ will be written as $(123,4,56).$ It is important to keep in mind that the numbers in each part form a set, hence their order is irrelevant. 
	
	When we write examples of ordered partitions we often omit commas and brackets for convenience. For example, $(\{1,2,3\},\{4\},\{5,6\})$ will be written as $(123,4,56).$
\end{notn}	

\begin{thm}{\cite[Section VI.2]{barvinokconvex}}
	Suppose $\balpha \in \R^{d+1}$ is a strictly increasing sequence. Then the face poset of the usual permutohedron $\Perm(\balpha)$ is isomorphic to $\OO_{d+1}.$ 
\end{thm}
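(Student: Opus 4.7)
The plan is to apply Lemma \ref{lem:anti}, which converts the question about $\FL(\Perm(\balpha))$ into a dual question about the face poset $\mathcal{F}(\Sigma(\Perm(\balpha)))$ of the normal fan. By the discussion following Lemma \ref{lem:braid_dissect}, the normal fan of $\Perm(\balpha)$ is the braid fan $\Br_d$ induced by $\MB_d$. So it suffices to exhibit an anti-isomorphism between $\mathcal{F}(\Br_d)$ and $\OO_{d+1}\setminus\{\hat{0}\}$ and then extend it to $\hat{0} \leftrightarrow \emptyset$.

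To this end, I would extend Definition \ref{def:braid_cone} from permutations to arbitrary ordered partitions. For each $\cS=(S_1,\ldots,S_k)\in\sO_{d+1}$, define
\[
\sigma(\cS) := \{\w\in W_d : w_i=w_j \text{ if } i,j\in S_\ell, \text{ and } w_i\le w_j \text{ if } i\in S_\ell, j\in S_m, \ell\le m\}.
\]
When $\cS = \cS(\pi)$ has $d+1$ singleton blocks, this recovers the braid cone $\sigma(\pi)$ of \eqref{eq:braid_closed_defi}, and in general $\sigma(\cS)$ is obtained from $\sigma(\pi)$ by turning the inequalities between blocks into equalities, for any $\pi$ whose corresponding permutation is compatible with the block ordering of $\cS$.

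The key combinatorial step is to show that $\cS\mapsto\sigma(\cS)$ is an order-reversing bijection from $\sO_{d+1}$ to $\mathcal{F}(\Br_d)$. Surjectivity comes from the observation that every face of a maximal cone $\sigma(\pi)$ is cut out by turning some strict inequalities in \eqref{eq:braid_open_defi} into equalities; bundling indices tied by equalities produces an ordered partition of $[d+1]$. Injectivity follows because the relative interior of $\sigma(\cS)$ is exactly the set of $\w$ with $w_i=w_j$ iff $i,j$ belong to the same block of $\cS$, so $\cS$ can be read off from $\sigma(\cS)$. For the order-reversal, if $\cS_1$ refines $\cS_2$, then $\sigma(\cS_2)$ imposes strictly more equality constraints than $\sigma(\cS_1)$, so $\sigma(\cS_2)$ is a face of $\sigma(\cS_1)$; hence $\cS_1\trianglelefteq\cS_2$ in $\OO_{d+1}$ corresponds to $\sigma(\cS_2)\subseteq\sigma(\cS_1)$ in $\mathcal{F}(\Br_d)$.

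Combining the anti-isomorphism $\FL(\Perm(\balpha))\setminus\{\emptyset\}\to\mathcal{F}(\Br_d)$ from Lemma \ref{lem:anti} with the anti-isomorphism $\mathcal{F}(\Br_d)\to\OO_{d+1}\setminus\{\hat{0}\}$ just constructed yields an order-isomorphism $\FL(\Perm(\balpha))\setminus\{\emptyset\}\cong\OO_{d+1}\setminus\{\hat{0}\}$; adjoining minima on both sides gives the isomorphism $\FL(\Perm(\balpha))\cong\OO_{d+1}$. The main obstacle, and really the only one, is the bookkeeping in the bijection step: carefully verifying that every face of $\Br_d$ (not just a face of some single $\sigma(\pi)$) arises from a unique $\sigma(\cS)$, and that the map genuinely sends covering relations to covering relations in reverse. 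This is a routine but careful inspection built on \eqref{eq:braid_open_defi}.
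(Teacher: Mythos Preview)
Your proposal is correct and matches exactly the approach the paper sketches. The paper does not give its own proof of this theorem---it is cited from \cite{barvinokconvex}---but immediately afterward, in Remark~\ref{rem:order2cone}, it defines precisely the cone $\sigma(\cS):=\sigma_{\preceq_\cS}$ you describe (via the preorder $i\preceq_\cS j$ iff $i\in S_a$, $j\in S_b$ with $a\le b$) and asserts that $\cS\mapsto\sigma(\cS)$ gives the isomorphism from $\OO_{d+1}\setminus\{\hat 0\}$ to the dual of $\cF(\Br_d)$ coming from Lemma~\ref{lem:anti}. Your argument is a fleshed-out version of that remark.
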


Note that by Lemma \ref{lem:anti}, there is a poset isomorphism from the poset $\OO_{d+1} \setminus \{\hat{0}\}$ to the dual of the face poset $\cF(\Br_d)$ of the braid fan $\Br_d$. See Remark \ref{rem:order2cone} below.

\subsection{Preorder cones}
In \cite[Section 3.4]{faces}, the authors defined a ``braid cone'' as a polyhedral cone in $W_{n-1}$ whose defining inequalities are of the form $w_i\leq w_j$ for some $i,j\in[n]$. To avoid confusion with Definition \ref{def:braid_cone}, we will refer to these cones as \emph{preorder cones}.

\begin{defn}\label{def:posetcone}
	For each preorder $\preceq$ on the set $[n]$, we define the \emph{preorder cone} associated to $\preceq$ to be
	\[
	\setlength{\abovedisplayskip}{0pt}
	\setlength{\belowdisplayskip}{0pt}
	\sigma_\preceq := \{\w\in W_{n-1} \ : \ w_i\leq w_j \textrm{ if }i\preceq j\}.\]
\end{defn}

It is clear from the definition %that a preorder cone $\sigma_\preceq$ is a braid cone and 
that every face of a preorder cone is itself a preorder cone.
\begin{rem} \label{rem:order2cone}
	%For any $\pi \in \fS_{n}$, the braid cone $\sigma(\pi)$ is a special case of preorder cone.
	%More precisely, 
	%$\sigma(\pi) = \sigma_\preceq,$ where $\preceq$ is the total order on $[n]$ given by 
	%\[ \pi^{-1}(1) \cover \pi^{-1}(2) \cover \dots \cover \pi^{-1}(n).\]
	For any $\cS=(S_1,\dots,S_k) \in \sO_{d+1}$, it determines a unique preorder $\preceq_\cS$ on $[d+1]$ by letting
	\begin{equation}
		\text{$i\leq_\cS j$ if $i\in S_a, j\in S_b$ with $a\leq b$.}
		\label{equ:ord_preorder}
	\end{equation}
	%	For any ordered set partition $\cS=(S_1,\dots,S_k)\in\sO_{d+1}$ we can define a preorder $\preceq$ on $[d+1]$ by letting $i\preceq j$ whenever $i\in S_a, \ j\in S_b$ and $a\leq b$, and thus 
	Then we define the cone $\sigma(\cS):=\sigma_{\leq_\cS}$.
	The set $\{\sigma(\cS)~:~\cS\in\sO_{d+1}\}$ consists of all cones in $\Br_d$.
	In particular, for $\pi\in\fS_{d}$ the cone $\sigma(\cS(\pi))$ is the braid cone $\sigma(\pi)$.
	
	The map $\cS \to \sigma(\cS)$ induces the poset isomorphism from $\OO_{d+1} \setminus \{\hat{0}\}$ to $\cF(\Br_d)$ that is asserted by Lemma \ref{lem:anti} with $P$ being the usual permutohedron $\Perm(\balpha)$.
\end{rem}

We state the following facts from \cite{faces} that will be useful in our constructions.

\begin{lem}[Proposition 3.5 in \cite{faces}]\label{lem:poset_facts} 
	%First, the set of braid cones in $W_{n-1}$ is in bijection with the set of preorders $\preceq$ on $[n]$.

	%Next, 
	Let $\preceq$ be a (fixed) preorder on the set $[n].$ Then the following statements hold. 
	\begin{enumerate}
		%\setcounter{enumi}{1}		
		%	\item  The preorder cone $\sigma_\preceq$ is full-dimensional in the affine space $ W_\preceq := \{ \w \in W_{n-1} \ : \ w_i=w_j \textrm{ if }i\equiv j \}.$	Hence, the dimension of $\sigma_\preceq$ is $|[n]/_\equiv|-1,$ the number of equivalent classes minus $1.$ 
		\item \label{itm:interior} 
		%    If for each covering relation $c: \bar{i} \cover \bar{j}$ in the induced poset $([n]/\equiv, \preceq),$ we pick $l \in \bar{i}$ and $k \in \bar{j}$ and create an inequality ${\rm I}(c): w_l \le w_k,$ and a corresponding strict inequality ${\rm SI}(c): w_l < w_k,$ then $\sigma_\preceq$ has a facet-defining inequality descripiton
		%   \begin{equation}\label{equ:sigmafacet} \sigma_\preceq = \{\w\in W_\preceq \ : \ {\rm I}(c), \textrm{ for $c$ a covering relation in $([n]/\equiv, \preceq)$} \}
			%  \end{equation}
		%Furthermore,, the relative interior of $\sigma_\preceq$ is 
		%\[\sigma_\preceq^\circ =\{\w\in W_\preceq \ : \ {\rm SI}(c), \textrm{ for $c$ a covering relation in $([n]/\equiv, \preceq)$} \}.\]
		The preorder cone has the following facet-defining inequality description: 
		\begin{equation*}
			\sigma_\preceq = \{\w\in W_\preceq \ : \ w_i \leq w_j \textrm{ if }i\cover j\}.
			%\label{eq:conefacet}
		\end{equation*}
		%where each inequality is facet-defining. 
		Thus, the relative interior of $\sigma_\preceq$ is 
		\begin{equation*}%\label{eq:interior} 
			\sigma_\preceq^\circ = \{\w\in W_\preceq \ : \ w_i < w_j \textrm{ if }i\cover j\}.
		\end{equation*}
		%\[\sigma^\circ_\preceq =\{\x\in W_d: x_i< x_j, \textrm{ whenever }i\precneq j\textrm{ and }  x_i=x_j, \textrm{ if }i\equiv j\}.\] 
		\item\label{itm:simplicial} The preorder cone $\sigma_{\preceq}$ is simplicial if and only if the Haase diagram of $\preceq$ is a tree. (Recall that a tree is a connected acyclic graph.)

		\item\label{itm:faces} 
		The preorder cone $\sigma_{\preceq'}$ is a face of $\sigma_\preceq$ if and only if  $\preceq'$ is a contraction of $\preceq$. 
		
		\item\label{itm:union} Suppose $\preceq$ is a partial order on $[n]$. Then its associated cone $\sigma_\preceq$ is a union of braid cones. More precisely, 
		%\begin{equation}\label{eq:preordercone}
		$\sigma_\preceq =\bigcup_{\pi \in \LL[\preceq]} \sigma(\pi).$
		%\end{equation}
		
	\end{enumerate}
\end{lem}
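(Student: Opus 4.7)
The plan is to establish the four statements in order, since (1) underlies the arguments for (2) and (3), and (4) needs only a direct argument on top of the definitions.

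For part (1), the reduction from all inequalities $w_i \le w_j$ (with $i \preceq j$) to those from covering relations is immediate from transitivity: given any $i \preceq j$, choose a saturated chain from $\hi$ to $\hj$ in the quotient poset and sum the covering inequalities along it. To see each covering inequality is facet-defining, I would work in the ambient subspace $W_\preceq := \{\w \in W_{n-1} : w_i = w_j \text{ whenever } i \equiv j\}$, in which $\sigma_\preceq$ is full-dimensional, and for each fixed covering pair $i \cover j$ exhibit a point of $W_\preceq$ that satisfies every other covering inequality strictly while violating $w_i \le w_j$; a concrete construction uses a rank function on the quotient poset with a small downward perturbation at $\hj$. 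The description of the relative interior as the locus of strict inequalities is then the standard polyhedral consequence.

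For part (2), I would count. Let $m$ denote the number of equivalence classes, so that $\dim \sigma_\preceq = \dim W_\preceq = m-1$, and by (1) the number of facets of $\sigma_\preceq$ equals the number of edges in the Hasse diagram of the quotient poset. Moreover, $\sigma_\preceq$ is pointed if and only if this Hasse diagram is connected: a line in $\sigma_\preceq$ corresponds to a nonzero $\v \in W_\preceq$ with $v_i = v_j$ along every covering edge, equivalently $\v$ constant on each connected component of the diagram. When the diagram is a tree it is connected with exactly $m-1$ edges, so $\sigma_\preceq$ is pointed with dimension equal to facet count and hence simplicial. Conversely, a non-tree Hasse diagram is either disconnected (cone not pointed) or connected with a cycle (facet count exceeds $m-1$), and in either case $\sigma_\preceq$ is not simplicial.

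Part (3) follows from the standard bijection between faces of a polyhedral cone and subsets of facet-defining inequalities that can be simultaneously tightened to equality. By (1), tightening the covering inequality $w_i \le w_j$ to $w_i = w_j$ amounts to merging $\hi$ with $\hj$, i.e., contracting the corresponding edge of the Hasse diagram. A subset of covering inequalities tightened jointly therefore produces a preorder $\preceq'$ that is precisely a contraction of $\preceq$, and the associated face is exactly $\sigma_{\preceq'}$; every contraction arises this way.

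For part (4), the inclusion $\bigcup_{\pi \in \LL[\preceq]} \sigma(\pi) \subseteq \sigma_\preceq$ is immediate, since any linear extension $\pi$ of $\preceq$ satisfies $\pi(i) \le \pi(j)$ whenever $i \preceq j$, and the defining inequality of $\sigma(\pi)$ then gives $w_i \le w_j$. For the reverse, given $\w \in \sigma_\preceq$, I would order $[n]$ by weakly increasing $w_i$ and break ties consistently with $\preceq$ on each tied block (possible because any finite partial order extends to a total order), yielding a linear extension $\pi$ of $\preceq$ with $\w \in \sigma(\pi)$. I expect the pointedness dichotomy in (2) to be the main subtlety, since connectedness of the Hasse diagram must be carefully related to the absence of a lineality subspace in $\sigma_\preceq$.
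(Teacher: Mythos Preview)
The paper does not prove this lemma; it is quoted from Postnikov--Reiner--Williams and stated without argument. Your sketch is therefore being weighed on its own merits, and on the whole it follows the natural route and is correct.

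There is one genuine slip, in part~(1). The construction you propose for showing that the covering inequality $w_i \le w_j$ is facet-defining does not work as written. A \emph{small} downward perturbation of the rank function at $\hj$ keeps $w_i \le w_j$ satisfied (the gap was $1$, and becomes $1-\epsilon > 0$); a perturbation large enough to reverse it will simultaneously violate $w_{i'} \le w_j$ for every other $\hi'$ covered by $\hj$, so you have not isolated the chosen inequality. A clean repair is to argue on the facet side instead: the hyperplane $\{w_i = w_j\}$ meets $\sigma_\preceq$ exactly in $\sigma_{\preceq'}$, where $\preceq'$ is the contraction of $\preceq$ at that edge; the rank function of the quotient poset of $\preceq'$ lies in the relative interior of $\sigma_{\preceq'}$, so this face is full-dimensional in $W_{\preceq'}$, hence of codimension one in $W_\preceq$. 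Equivalently, since $\hi \lessdot \hj$ is a covering, the Hasse diagram contains no other directed path from $\hi$ to $\hj$, so $\e_{\hj} - \e_{\hi}$ is not a nonnegative combination of the remaining covering normals and the inequality is irredundant. With this fix, your arguments for (2)--(4) go through as stated.
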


\section{Trees and the associahedron}\label{sec:trees}

In this section, we will review the combinatorics of the associahedron and develop results that will be needed in our constructions of associahedra and permuto-associahedra.
As mentioned in the introduction, instead of using bracketings, we will define the face poset of the associahedron in terms of trees.

\subsection{Strictly branching trees}\label{subsec:sbt}
We assume the readers are familiar with terminologies on graphs as presented in \cite[Appendix]{ec1}, and review ones that are relevant to our paper.

%A \emph{tree} is a connected acyclic graph, and a
A \emph{rooted tree} is tree with a special vertex which is called the \emph{root} of the tree. For any edge $\{i,j\}$ of a rooted tree $\TT$, if $i$ is closer to the root of $\TT$ than $j,$ then we say $i$ is the \emph{parent} of $j$ and $j$ is a child of $i.$ We call a vertex of a rooted tree a \emph{leaf} if it has no children, and an \emph{internal vertex} otherwise. An edge of a rooted tree is \emph{internal} if it connects two internal vertices. 
A rooted tree is \emph{strictly branching} if each of its internal vertex has at least two children. 
An \emph{unlabeled plane rooted tree} $\TT$ is a rooted tree whose vertices are considered to be indistinguishable, but the subtrees at any vertex are linearly ordered.

For $n \in \ZZ_{\ge 0},$ let $\sT_{n}$ be the set of unlabeled plane rooted trees that are strictly branching and have $n+1$ leaves. For $0 \le k \le n,$ let $\sT_{n,k}$ be the set of trees in $\sT_{n}$ that has $k$ internal vertices. Note that $\sT_{n,0} = \emptyset$ unless $n=0,$ and $\sT_{0} = \sT_{0,0}$ consists of the only rooted tree on one vertex. It is easy to see that for any positive integer $n$ the followings are true:
\begin{enumerate}[leftmargin=*]
	\item %For all $n \in \NN,$ we have
	$\displaystyle \sT_{n} = \bigcupdot_{k=1}^{n} \sT_{n,k}.$ 
	\item $\sT_{n,1}$ consists of only one tree in which the root is the only internal vertex.
	
	\item $\sT_{n,n}$ consists of all the complete binary trees with $n+1$ leaves.
	Recall that a \emph{complete binary tree} is an unlabeled plane rooted tree whose internal vertices all have exactly two children. For each internal vertex of a complete binary tree, we call its first child its \emph{Left} child, and second child its \emph{Right} child. As a consequence, we call the two corresponding subtrees its \emph{Left} subtree and \emph{Right} subtree, and the two connecting edges a \emph{Left} internal edge and a \emph{Right} internal edge.
\end{enumerate}

See Figure \ref{fig:planetrees} for examples: the tree on the left is a complete binary tree in $\sT_{8,8}$ and the tree on the right is a plane rooted tree in $\sT_{8,5}.$ 
The leaves of both trees were enumerated by left-to-right order.

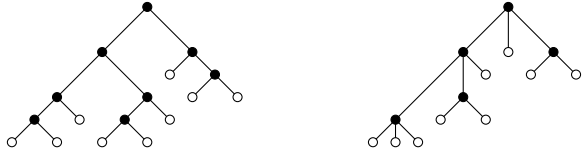
\begin{figure}[htp]
	\begin{tikzpicture}[scale=0.6]
		
		\begin{scope}[xshift=-4cm]
			\draw (0,0)--(3,3)--(4.5,1.5)--(5,1) (1,1)--(1.5,0.5) (0.5,0.5)--(1,0) (2,2)--(3,1)--(2,0)
			(3,1)--(3.5,0.5) (2.5,0.5)--(3,0) (4,2)--(3.5,1.5) (4.5,1.5)--(4,1);
			\draw[fill=white] (0,0) circle (.1);
			\draw[fill=white] (1,0) circle (.1);
			\draw[fill=black] (0.5,0.5) circle (.1);
			\draw[fill=black] (1,1) circle (.1);
			\draw[fill=white] (1.5,0.5) circle (.1);
			\draw[fill=black] (2,2) circle (.1);
			\draw[fill=white] (2,0) circle (.1);
			\draw[fill=white] (3,0) circle (.1);
			\draw[fill=black] (2.5,0.5) circle (.1);
			\draw[fill=black] (3,1) circle (.1);
			\draw[fill=white] (3.5,0.5) circle (.1);
			\draw[fill=black] (3,3) circle (.1);
			\draw[fill=black] (4,2) circle (.1);
			\draw[fill=white] (3.5,1.5) circle (.1);
			\draw[fill=black] (4.5,1.5) circle (.1);
			\draw[fill=white] (4,1) circle (.1);
			\draw[fill=white] (5,1) circle (.1);
			
		\end{scope}
		
		\begin{scope}[xshift=4cm]
			\draw (0,0)--(3,3)--(4.5,1.5) (0.5,0)--(0.5,0.5)--(1,0) (4,2)--(3.5,1.5) (3,3)--(3,2) (1.5,0.5)--(2,1)--(2,2) (2.5,0.5)--(2,1) (2,2)--(2.5,1.5);
			\draw[fill=white] (0,0) circle (.1);
			\draw[fill=white] (0.5,0) circle (.1);
			\draw[fill=black] (0.5,0.5) circle (.1);
			%\draw[fill=black] (1,1) circle (.1);
			\draw[fill=white] (1,0) circle (.1);
			\draw[fill=black] (2,2) circle (.1);
			\draw[fill=white] (1.5,0.5) circle (.1);
			\draw[fill=white] (2.5,0.5) circle (.1);
			\draw[fill=black] (2,1) circle (.1);
			\draw[fill=white] (2.5,1.5) circle (.1);
			\draw[fill=black] (3,3) circle (.1);
			\draw[fill=black] (4,2) circle (.1);
			\draw[fill=white] (3.5,1.5) circle (.1);
			\draw[fill=white] (4.5,1.5) circle (.1);
			\draw[fill=white] (3,2) circle (.1);
			%\draw[fill=black] (5,1) circle (.1);
		\end{scope}
	\end{tikzpicture}
	\caption{Example of unlabeled plane rooted trees that are strictly branching.}
	\label{fig:planetrees}
\end{figure}

%	\subsection{Flips on complete binary trees}\label{subsubsec:flips} 
\begin{defn}\label{defn:flips}	Let $\TT, \TT' \in \sT_{n,n}$. %be two complete binary trees. 
	We say $\TT$ is obtained from $\TT'$ by a \emph{flip (of an internal edge)} and $\TT'$ is obtained from $\TT$ by a \emph{flip (of an internal edge)} if there exist a Left internal edge $e$ of $\TT$ and a Right internal edge $e'$ of $\TT'$ such that after contracting $e$ in $\TT$ and contracting $e'$ in $\TT'$, we obtain exactly the same tree. 
	
\end{defn}	
See Figure \ref{fig:flip} for a demonstration of how a flip of an internal edge works.

\begin{figure}[htp]

	\begin{tikzpicture}[scale=0.6]
		
		\begin{scope}[xshift=-5cm]
			\node[draw] at (2,0) (1){$A$};
			\node[draw] at (4,0) (2){$B$};
			\node[draw] at (6,0) (3){$C$};
			
			\node at (3,1) (4){}; \draw[fill=black] (4) circle (.1);
			\node at (4,2) (5){}; \draw[fill=black] (5) circle (.1);

			\node[draw] at (4,3) (6){$D$};

			\draw (1)--(4) (5)--(3) (4)--(2) (6)--(5);
			\draw[ultra thick, red] (4)--(5);

		\end{scope}
		
		\begin{scope}[scale=0.6, xshift=3cm, yshift=4cm]
			\node[draw] at (2,0) (1){$A$};
			\node[draw] at (4,0) (2){$B$};
			\node[draw] at (6,0) (3){$C$};
			
			%\node at (3,1) (4){}; \draw[fill=black] (4) circle (.1);
			\node at (4,2) (5){}; \draw[fill=red] (5) circle (.1);

			\node[draw] at (4,3.2) (6){$D$};

			\draw (1)--(5) (5)--(3) (5)--(2) (4,2)--(4,2.5);

		\end{scope}

		\begin{scope}[xshift=5cm]
			%\foreach \i in {1,2,3}
			%{\node[draw] at (2*\i,0) (\i){$\TT_{\i}$};
				%}
			
			\node[draw] at (2,0) (1){$A$};
			\node[draw] at (4,0) (2){$B$};
			\node[draw] at (6,0) (3){$C$};
			
			\node at (5,1) (4){}; \draw[fill=black] (4) circle (.1);
			\node at (4,2) (5){}; \draw[fill=black] (5) circle (.1);
			
			\node[draw] at (4,3) (6){$D$};

			\draw (1)--(5) (4)--(3) (4)--(2) (6)--(5);
			\draw[ultra thick, red] (4)--(5);
			%		\draw (4,1) circle (2.8);
		\end{scope}

		\begin{scope}[yshift=1cm, xshift=4cm]
			\draw[->] (-2,-.5)--(2,-.5);
			\draw[->] (2,0.5)--(-2,0.5);
			\node at (0,0) {\small Flipping an internal edge};
		\end{scope}

	\end{tikzpicture}
	\caption{Flips on complete binary trees}
	\label{fig:flip}
\end{figure}
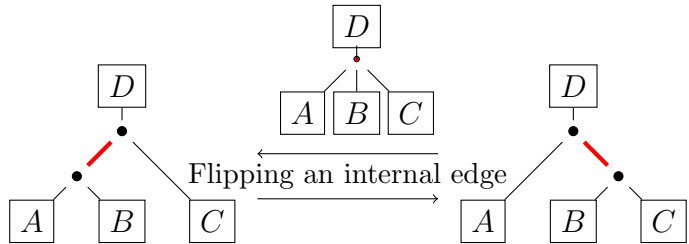

\subsection{The associahedron}\label{subsec:asso}
We can now define associahedron abstractly. Here we replace $n$ with $d+1$ in $\sT_{n}$ or $\sT_{n,k}.$  

\begin{defn}\label{def:asso}
	A \emph{$d$-associahedron} is a $d$-dimensional polytope such that its vertices are in bijection with complete binary trees in $\sT_{d+1,d+1}$, and two vertices form an edge if and only if their corresponding complete binary trees are obtained from one another by a flip.
\end{defn}

Every complete binary tree $\TT \in \sT_{d+1,d+1}$ has exactly $d$ internal edges, and thus is connected with exactly $d$ complete binary trees in $\sT_{d+1,d+1}$ via flips.
This means that an associahedron is a simple polytope. A classical result of Blind and Mani \cite{blind-mani} 
(with a very short alternative proof given by G.Kalai in \cite{kalai}) states that the graph\footnote{The graph $G(P)=(V,E)$ of a polytope $P$ is defined by taking $V$ be the vertex set of $P$ and two vertices are joined by an edge if they form a one dimensional face of $P$.} 
of a simple polytope determines its face poset. \footnote{By duality we have that the facet-ridge graph of a simplicial polytope determines the rest of the face poset. In \cite[Question 1]{blind-mani} it is asked whether the same property hold for simplicial spheres. To our best knowledge this generalization remains open to this day.} 
As a consequence, all realizations of $d$-associahedra share the same face poset which we describe below.

\begin{defn}\label{def:poset}
	We define a partial order $\leq_{K}$ on the set $\sT_{d+1}\cup\{\hat{0}\}$ by declaring $\TT_1\leq_K \TT_2$ whenever $\TT_2 \in \sT_{d+1}$ is obtained from $\TT_1 \in \sT_{d+1}$ by contracting \emph{internal edges}, and $\hat{0}\leq_K \TT$ for all $\TT\in\sT_{d+1}$. 
	We denote the poset $(\sT_{d+1}\cup\{\hat{0}\},\leq_K)$ by $\cK_{d}$. 
\end{defn}

It is easy to verify that the poset $\cK_{d}$ is graded of rank $d+1$ with a unique minimal element $\hat{0}$. For each $1 \le k \le d+1,$ the set $\sT_{d+1,k}$ consists of all elements of $\cK_d$ of rank $d+2-k$. In particular, the only tree in $\sT_{d+1,1}$ is the unique maximal element of $\cK_d.$

The following lemma is a well-known result, and often is taken as the definition of associahedra. See \cite[Proof of Lemma 3.3]{revisited-arxiv-v1} for a proof.
\begin{lem}\label{lem:FLdefineAsso}

	A polytope is a $d$-associahedron if and only if its face poset is $\cK_d$. 
\end{lem}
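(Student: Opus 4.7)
The plan is to use the Blind--Mani theorem quoted right before the lemma, which says that the face poset of a simple polytope is determined by its graph.

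I would first verify that any $d$-associahedron $P$ is simple: by Definition \ref{def:asso}, the edges incident to a vertex $v_\TT$ correspond to the flips available from $\TT \in \sT_{d+1,d+1}$. A complete binary tree with $d+2$ leaves has exactly $d$ internal edges, each giving one flip, so every vertex of $P$ has degree $d = \dim P$, confirming simplicity.

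For the $(\Leftarrow)$ direction, assume $\FL(P) \cong \cK_d$. Since $\cK_d$ has rank $d+1$, we get $\dim P = d$. The rank-$1$ elements of $\cK_d$ are precisely the complete binary trees in $\sT_{d+1,d+1}$, so the vertices of $P$ biject with them. An edge of $P$ corresponds to a rank-$2$ element, that is, a tree $\TT \in \sT_{d+1,d}$; a quick count of children ($2d+1$ total among $d$ strictly branching internal vertices) shows that such a $\TT$ has exactly one internal vertex with three children and all others with two. Hence $\TT$ covers exactly two complete binary trees, obtained by pairing either the first two or the last two of those three children, and these two differ by a single flip in the sense of Definition \ref{defn:flips}. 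Thus edges of $P$ correspond exactly to flips, so $P$ meets Definition \ref{def:asso}.

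For the $(\Rightarrow)$ direction, I invoke Blind--Mani: since $P$ is simple, $\FL(P)$ is determined by the graph $G(P)$, and Definition \ref{def:asso} fixes $G(P)$ up to isomorphism. Consequently any two $d$-associahedra share a common face poset, so it suffices to exhibit one $d$-associahedron whose face poset is $\cK_d$. Loday's realization (or the realization constructed in Section \ref{sec:asso}) serves this purpose; one checks directly from the coordinates that the $(k-1)$-dimensional faces are indexed by $\sT_{d+1,k}$ with inclusion corresponding to edge contraction.

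The main obstacle is matching the full face lattice with $\cK_d$ for a single concrete model, since the abstract Definition \ref{def:asso} only constrains vertices and edges. Blind--Mani is exactly what lets us avoid doing this higher-dimensional bookkeeping for an arbitrary $d$-associahedron and reduces the task to one convenient realization.
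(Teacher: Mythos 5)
Your proof is correct, and it follows the same route the paper indicates: establish simplicity from the flip count, invoke Blind--Mani to reduce to a single realization, and verify that $\cK_d$ is the common face poset by matching rank-$1$ and rank-$2$ elements with complete binary trees and flips. One small point worth making explicit: to know Loday's realization is itself a $d$-associahedron (so that Blind--Mani applies to it), you are implicitly using your already-proved $(\Leftarrow)$ direction after checking $\FL(\loday(\balpha))\cong\cK_d$; stating this removes any appearance of circularity with Theorem \ref{thm:loday}, whose proof cites the present lemma.
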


\subsection{Enumerating leaves and labeling internal vertices}\label{subsec:labeling}

For each tree in $\sT_{n}$, we canonically enumerate its leaves by left-to-right order, denoted by $\ell_1, \ell_2, \ell_3, \dots, \ell_{n+1}.$ 	For each tree $\TT\in \sT_{n}$, we also assign labels $1,2,\dots,n$ to its internal vertices: If an internal vertex $v$ is the closest common ancestor of $\ell_i$ and $\ell_{i+1}$ we label $v$ with $i$.

Suppose $\TT'$ is a subtree of $\TT.$ We denote by $\II_\TT(\TT')$ the collection of labels on the internal vertices of $\TT'$ (as a substree of  $\TT'$).
\begin{ex}
	In Figure \ref{fig:preorders} we depict the internal vertices of the trees of Figure \ref{fig:planetrees} together with the labelings with the set $[8]$.
	Let $\TT$ be the tree on the left of Figure \ref{fig:preorders}, and $\TT'$ the right subtree of $\TT.$ Then $\II_\TT(\TT') =\{7,8\}.$
	
	\begin{figure}[htp]
		\begin{tikzpicture}[scale=0.6]
			
			\begin{scope}[xshift=-4cm]

				\draw (0,0)--(3,3)--(4.5,1.5)--(5,1) (1,1)--(1.5,0.5) (0.5,0.5)--(1,0) (2,2)--(3,1)--(2,0)
				(3,1)--(3.5,0.5) (2.5,0.5)--(3,0) (4,2)--(3.5,1.5) (4.5,1.5)--(4,1);
				
				\draw[fill=black] (0.5,0.5) circle (.1);
				\draw[fill=black] (1,1) circle (.1);
				\draw[fill=black] (2,2) circle (.1);
				\draw[fill=black] (2.5,0.5) circle (.1);
				\draw[fill=black] (3,1) circle (.1);
				\draw[fill=black] (3,3) circle (.1);
				\draw[fill=black] (4,2) circle (.1);
				\draw[fill=black] (4.5,1.5) circle (.1);
				\draw[fill=white] (0,0) circle (.1);
				\draw[fill=white] (1,0) circle (.1);
				\draw[fill=white] (1.5,0.5) circle (.1);
				\draw[fill=white] (2,0) circle (.1);
				\draw[fill=white] (3,0) circle (.1);
				\draw[fill=white] (3.5,0.5) circle (.1);
				\draw[fill=white] (3.5,1.5) circle (.1);
				\draw[fill=white] (4,1) circle (.1);
				\draw[fill=white] (5,1) circle (.1);

				\node[below] at (0,0) {\scriptsize $\ell_1$};
				\node[below] at (1,0) {\scriptsize $\ell_2$};
				\node[above] at (0.5,0.5) {\footnotesize 1};
				\node[above] at (1,1) {\footnotesize 2};
				\node[below] at (1.5,0.5) {\scriptsize $\ell_3$};
				\node[above] at (2,2) {\footnotesize 3};
				\node[below] at (2,0) {\scriptsize $\ell_4$};
				\node[below] at (3,0) {\scriptsize $\ell_5$};
				\node[above] at (2.5,0.5) {\footnotesize 4};
				\node[above] at (3,1) {\footnotesize 5};
				\node[below] at (3.5,0.5) {\scriptsize $\ell_6$};
				\node[above] at (3,3) {\footnotesize 6};
				\node[above] at (4,2) {\footnotesize 7};
				\node[below] at (3.5,1.5) {\scriptsize $\ell_7$};
				\node[above] at (4.5,1.5) {\footnotesize 8};
				\node[below] at (4,1) {\scriptsize $\ell_8$};
				\node[below] at (5,1) {\scriptsize $\ell_9$};

			\end{scope}
			
			\begin{scope}[xshift=4cm]
				
				%			\draw (0.5,0.5)--(3,3)  (2,2)--(3,1) (3,3)--(4,2);
				\draw (0,0)--(3,3)--(4.5,1.5) (0.5,0)--(0.5,0.5)--(1,0) (4,2)--(3.5,1.5) (3,3)--(3,2) (1.5,0.5)--(2,1)--(2,2) (2.5,0.5)--(2,1) (2,2)--(2.5,1.5);
				\draw[fill=white] (0,0) circle (.1);
				\draw[fill=white] (0.5,0) circle (.1);
				\draw[fill=black] (0.5,0.5) circle (.1);
				%\draw[fill=black] (1,1) circle (.1);
				\draw[fill=white] (1,0) circle (.1);
				\draw[fill=black] (2,2) circle (.1);
				\draw[fill=white] (1.5,0.5) circle (.1);
				\draw[fill=white] (2.5,0.5) circle (.1);
				\draw[fill=black] (2,1) circle (.1);
				\draw[fill=white] (2.5,1.5) circle (.1);
				\draw[fill=black] (3,3) circle (.1);
				\draw[fill=black] (4,2) circle (.1);
				\draw[fill=white] (3.5,1.5) circle (.1);
				\draw[fill=white] (4.5,1.5) circle (.1);
				\draw[fill=white] (3,2) circle (.1);
				%\draw[fill=black] (5,1) circle (.1);
				
				\node[above] at (0.5,0.5) {\footnotesize 1,2};
				\node[above] at (2,2) {\footnotesize 3,5};
				\node[anchor=south east, inner sep=2] at (2,1) {\footnotesize 4};
				\node[above] at (3,3) {\footnotesize 6,7};
				\node[above] at (4,2) {\footnotesize 8};
				\node[below] at (0,0) {\scriptsize $\ell_1$};
				\node[below] at (0.5,0) {\scriptsize $\ell_2$};
				\node[below] at (1,0) {\scriptsize $\ell_3$};
				\node[below] at (1.5,0.5) {\scriptsize $\ell_4$};
				\node[below] at (2.5,0.5)  {\scriptsize $\ell_5$};			
				\node[below] at (2.5,1.5) {\scriptsize $\ell_6$};
				\node[below] at (3,2) {\scriptsize $\ell_7$};
				\node[below] at (3.5,1.5) {\scriptsize $\ell_8$};
				\node[below] at (4.5,1.5) {\scriptsize $\ell_9$};	
				
			\end{scope}

		\end{tikzpicture}
		\caption{Internal labelings associated to Figure \ref{fig:planetrees}.}\label{fig:preorders}
	\end{figure}
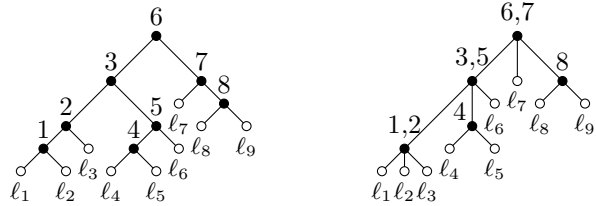
\end{ex}

The name associahedron historically comes from the interpretation of vertices as bracketings and flips as applications of associativity, see following remark.

\begin{rem}\label{rem:associativity}		
	The labeling on the leaves gives a canonical bijection between $\sT_{n}$ and the set of all possible ``bracketings'' on $\left(\ell_1 \ast \ell_2 \ast\dots \ast \ell_{n+1}\right),$ which induces a bijection between complete binary trees in $\sT_{n,n}$ and ``full bracketings'' on $\left(\ell_1 \ast \ell_2 \ast\dots \ast \ell_{n+1}\right).$
	Furthermore, flipping internal edges on complete binary trees correspond to applying ``associative law'' on full bracketings on $\left(\ell_1 \ast \ell_2 \ast\dots \ast \ell_{n+1}\right).$
	
\end{rem}

The following lemma states properties of the labels on the internal vertices of trees in $\sT_{n}.$ The proof of it is straightforward, and thus is omitted. 

\begin{lem}\label{lem:label}
	Let $\TT \in \sT_{n}.$ 
	\begin{enumerate}
		\item\label{itm:nonempty} Each internal vertex of $\TT$ is labeled by a nonempty set of numbers, and each number in $[n]$ appears exactly once. 
		Thus, if $\TT\in \sT_{n,n}$, each internal vertex has a unique label.
		
		\item \label{itm:label-substract}
		Let $\TT'$ be a subtree of $\TT$. 
		Then $\II_\TT(\TT')$ 
		is an integer interval, and it is empty if and only if $\TT'$ is a leave of $\TT$. 
		Furthermore, suppose $\TT'$ is rooted at an internal vertex of $\TT,$ thus $\II_\TT(\TT') = [a, b]$ for some integers $a\le b$. Then the labels of internal vertices of $\TT'$ when we treat it as a rooted tree by itself are obtained from subtracting $(a-1)$ from those when we treat it as a subtree of $\TT.$
		
		\item \label{itm:label-increase} Suppose $\TT_1$ and $\TT_2$ are two subtrees below an internal vertex of $\TT.$ If $\TT_1$ is to the left of $\TT_2,$ then any label appearing on an internal vertex of $\TT_1$ is smaller than any label appearing on an internal vertex of $\TT_2.$
		
		\item \label{itm:label-compatible} Suppose $\TT'$ is obtained from $\TT$ by contracting one internal edge $\{x,y\}$, and we call the new vertex $z$. Then the internal vertex labeling of $\TT'$ can be obtained from that of $\TT$ by labeling $z$ with the union of the labels for $x$ and $y$ and keeping labels for all the other internal vertices.

	\end{enumerate}
\end{lem}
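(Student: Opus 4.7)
My plan is to verify each of the four parts from the definition of the internal-vertex labeling: a vertex $v$ is labeled by $i$ precisely when $v$ is the closest common ancestor of the consecutive leaves $\ell_i$ and $\ell_{i+1}$. The key observation underlying all four parts is that, for any subtree of a plane rooted tree, the descendant leaves form a contiguous block in the left-to-right enumeration.

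For part (\ref{itm:nonempty}), I would first note that each $i\in[n]$ yields a unique closest common ancestor of $\ell_i,\ell_{i+1}$, so $i$ appears on exactly one internal vertex. To see that every internal vertex $v$ receives at least one label I would use the strictly-branching hypothesis: $v$ has at least two children, and if $\TT_L$ and $\TT_R$ denote two consecutive child subtrees, then the last leaf of $\TT_L$ and the first leaf of $\TT_R$ are consecutive in the overall enumeration with $v$ as their closest common ancestor.

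For part (\ref{itm:label-substract}), I would argue that if $\TT'$ is a subtree whose leaves are $\ell_a,\ell_{a+1},\dots,\ell_{b+1}$, then for $a\leq i\leq b$ the closest common ancestor of $\ell_i$ and $\ell_{i+1}$ lies inside $\TT'$ (since they share an ancestor within $\TT'$, namely the root of $\TT'$), while for $i<a$ or $i>b$ it cannot (since $\ell_i$ and $\ell_{i+1}$ are not both descendants of the root of $\TT'$). This gives $\II_\TT(\TT')=[a,b]$, which is empty exactly when $b<a$, i.e.\ when $\TT'$ is a single leaf. The re-indexing claim follows because re-enumerating the leaves of $\TT'$ shifts $\ell_i\mapsto \ell_{i-a+1}$, so the label attached to each internal vertex is subtracted by $a-1$. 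Part (\ref{itm:label-increase}) then follows immediately: if $\II_\TT(\TT_1)=[a_1,b_1]$ and $\II_\TT(\TT_2)=[a_2,b_2]$ with $\TT_1$ to the left of $\TT_2$, then the leaves of $\TT_1$ precede those of $\TT_2$, forcing $b_1+1\leq a_2$, hence $b_1<a_2$.

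For part (\ref{itm:label-compatible}), I would observe that contracting an internal edge $\{x,y\}$ does not change the underlying set of leaves or their left-to-right order, so closest common ancestors of consecutive leaf pairs are preserved except when the original ancestor was $x$ or $y$; in those cases the new ancestor is the merged vertex $z$. Hence every internal vertex of $\TT'$ other than $z$ keeps its label, while $z$ inherits the union of the labels of $x$ and $y$. I do not anticipate any genuine obstacle here — the only mild subtlety is being careful that in part (\ref{itm:label-substract}) ``subtree'' is interpreted so that its leaves form a contiguous block, which is automatic from the plane rooted tree structure.
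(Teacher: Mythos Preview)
Your proposal is correct and complete. The paper actually omits the proof of this lemma entirely, stating only that ``the proof of it is straightforward, and thus is omitted,'' so your argument supplies exactly the kind of verification the authors had in mind: using that the leaves of any subtree form a contiguous block in the left-to-right enumeration, and tracking closest common ancestors accordingly.
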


We want to note that given a tree in $\sT_{n}$ or $\sT_{n,n}$, the labels we put on its leaves (and internal vertices) are uniquely determined. Hence, those labels do not carry extra information.

\subsection{From trees to cones}\label{subsec:tree2cone}
We finish this section by defining a cone for each of our trees. 

	Let $\TT \in \sT_{n}$. We define $\GG(\TT)$ to be the induced subtree of $\TT$ on its internal vertices, together with the labeling for internal vertices we described above. One sees that $\GG(\TT)$ is the Hasse diagram of a preorder on $[n],$ which we denote by $\preceq_\TT$. Recall that we have defined linear extensions and preorder cones in \S \ref{subsec:preorder}. For convenience and brevity, we denote
	\begin{equation}\label{eq:sigmaT}
		\LL[\TT]:=\LL[[n],\preceq_\TT]  \text{\quad and \quad}   \sigma(\TT):=\sigma_{\preceq_\TT}.
	\end{equation}
	
	%	\end{defn}
%	
The following lemma follows immediately from definitions and Lemma \ref{lem:poset_facts}~\eqref{itm:faces}.
\begin{lem}\label{lem:treecompatible} 
	Let $\TT,\TT' \in \sT_n.$ Then the following statements are equivalent:
	\begin{enumerate}
		\item \label{itm:tree_contraction} $\TT'$ is a contraction of $\TT$ (i.e., $\TT \le_K \TT'$). 
		\item \label{itm:preord_contraction}The preorder $\preceq_{\TT'}$ is a contraction of $\preceq_{\TT}$.
		\item \label{itm:tree_face}The cone $\sigma(\TT')$ is a face of $\sigma(\TT)$.
	\end{enumerate}
	
	%	   if and only if $\preceq_{\TT'}$ is a contraction of $\preceq_{\TT}$ if and only if $\sigma_{\TT'}$ is a face of $\sigma_{\TT}$.
\end{lem}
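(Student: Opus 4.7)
The plan is to establish the equivalence (1)$\Leftrightarrow$(2) directly from the definitions and then invoke Lemma \ref{lem:poset_facts}~\eqref{itm:faces} for (2)$\Leftrightarrow$(3).

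First I would unpack the bridge between trees and preorders. By construction, $\GG(\TT)$ is the Hasse diagram of $\preceq_\TT$, and its edges are in bijection with the internal edges of $\TT$ (removing the leaves of $\TT$ leaves precisely the internal skeleton). Hence contracting a single internal edge $\{x,y\}$ of $\TT$ produces a tree $\TT'$ whose associated graph $\GG(\TT')$ is obtained from $\GG(\TT)$ by contracting the corresponding edge of the Hasse diagram. By Lemma \ref{lem:label}~\eqref{itm:label-compatible}, the label on the merged vertex is exactly the union of the labels of $x$ and $y$, which matches exactly the description (given in \S\ref{subsec:preorder}) of how a preorder is contracted: one contracts an edge of the Hasse diagram and merges the two corresponding equivalence classes. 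Iterating this across any sequence of internal edge contractions gives (1)$\Rightarrow$(2).

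For the converse (2)$\Rightarrow$(1), I would run the same correspondence backwards: each edge contraction performed in the Hasse diagram of $\preceq_\TT$ to reach $\preceq_{\TT'}$ corresponds to the contraction of a unique internal edge of $\TT$, and after performing all of these contractions the resulting tree has the same internal skeleton and the same labels as $\TT'$. Since a strictly branching plane rooted tree in $\sT_n$ is determined by its labeled internal skeleton (the leaves are then placed canonically so that the enumeration $\ell_1,\ldots,\ell_{n+1}$ induces the given labels, using Lemma \ref{lem:label}~\eqref{itm:label-increase}), the resulting tree must coincide with $\TT'$. Thus $\TT \le_K \TT'$.

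Finally, the equivalence (2)$\Leftrightarrow$(3) is exactly Lemma \ref{lem:poset_facts}~\eqref{itm:faces} applied to the preorders $\preceq_\TT$ and $\preceq_{\TT'}$, together with the definition $\sigma(\TT):=\sigma_{\preceq_\TT}$ from \eqref{eq:sigmaT}. The main (modest) obstacle is simply being careful that the bijection between internal edges of $\TT$ and Hasse-diagram edges of $\preceq_\TT$ is compatible with label-merging; once that bookkeeping is done, the three statements are literally the same assertion phrased in three different languages.
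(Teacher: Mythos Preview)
Your proposal is correct and follows essentially the same approach as the paper, which simply states that the lemma ``follows immediately from definitions and Lemma~\ref{lem:poset_facts}~\eqref{itm:faces}.'' You have filled in exactly the bookkeeping the paper omits---the bijection between internal edges of $\TT$ and Hasse-diagram edges of $\preceq_\TT$ via Lemma~\ref{lem:label}~\eqref{itm:label-compatible}---and invoked the same key lemma for the cone-face equivalence.
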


%\begin{proof} 
%	The equivalence between \eqref{itm:tree_contraction} and \eqref{itm:preord_contraction} follows directly from definitions.
%	The equivalence of \eqref{itm:preord_contraction} and \eqref{itm:tree_face} follows from Lemma \ref{lem:poset_facts}~\eqref{itm:faces}.
%\end{proof}

The next lemma gives a connection between permutations and complete binary trees. 

\begin{lem}\label{lem:unique}
	Let $n \in \ZZ_{\ge 0}.$ For every $\pi\in\fS_{n}$, there exists a \emph{unique} complete binary tree $\TT\in\sT_{n,n}$ such that 
	$\pi \in \LL[\TT].$
\end{lem}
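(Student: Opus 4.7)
The plan is to proceed by strong induction on $n$. For $n=0$ and $n=1$, both $\fS_n$ and $\sT_{n,n}$ are singletons, so the claim is immediate.

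For the inductive step with $n\ge 2$, the crucial observation is that in any $\TT\in\sT_{n,n}$, each internal vertex carries a unique label in $[n]$ (Lemma \ref{lem:label}), and the root of $\TT$ is the unique maximum of the partial order $\preceq_\TT$---in the Hasse diagram $\GG(\TT)$ the root sits above every other internal vertex, hence is an ancestor of them all. Any linear extension $\pi\in\LL[\TT]$ must therefore send the root's label to $n$, so the root of $\TT$ is forced to be labeled $k:=\pi^{-1}(n)$. This alone pins down one datum of any candidate $\TT$ from $\pi$.

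Given $\pi\in\fS_n$, set $k:=\pi^{-1}(n)$. By Lemma \ref{lem:label}, any $\TT\in\sT_{n,n}$ whose root is labeled $k$ has its left subtree $\TT_L$ carrying internal labels $\{1,\dots,k-1\}$ and its right subtree $\TT_R$ carrying $\{k+1,\dots,n\}$; after the uniform shift that makes each block start at $1$, the standalone subtrees lie in $\sT_{k-1,k-1}$ and $\sT_{n-k,n-k}$ respectively (with the edge cases $k=1$ or $k=n$ producing a single-leaf subtree, formally $\sT_{0,0}$). Crucially, $\preceq_\TT$ restricted to each half coincides with $\preceq_{\TT_L}$ (resp.\ $\preceq_{\TT_R}$), and there are no comparisons between the two halves beyond the fact that both sit below the root.

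Let $\tilde\pi_L\in\fS_{k-1}$ and $\tilde\pi_R\in\fS_{n-k}$ be the standardizations of $(\pi(1),\dots,\pi(k-1))$ and $(\pi(k+1),\dots,\pi(n))$, i.e., each value replaced by its rank within its own block. Since the linear extension property depends only on the relative order of images, $\pi\in\LL[\TT]$ holds if and only if $\tilde\pi_L\in\LL[\TT_L]$ and $\tilde\pi_R\in\LL[\TT_R]$ (the condition $\pi(k)=n$ being automatic from the choice of $k$). By the inductive hypothesis $\TT_L$ and $\TT_R$ are uniquely determined by $\tilde\pi_L$ and $\tilde\pi_R$, and reattaching them beneath a new root labeled $k$ produces a unique $\TT\in\sT_{n,n}$ with $\pi\in\LL[\TT]$. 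I expect the only point requiring care in a full write-up is verifying this last equivalence, which follows directly from the description of $\GG(\TT)$ as the tree with $k$ at its apex and two order-independent subtree posets dangling below it.
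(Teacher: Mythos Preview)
Your inductive argument is correct and is precisely the recursive ``binary search tree'' construction that the paper invokes: the paper does not give a self-contained proof but only points to Loday's map $\psi$ and cites Loday--Ronco, and your find-the-maximum-then-recurse procedure is exactly that map unpacked. In other words, you have written out in detail what the paper leaves to the references, with the same underlying idea.
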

The key idea of the proof is to construct a tree $\TT$ by inserting the permutation $\pi$ in a binary search tree. 
Such a construction was given as the map $\psi$ by Loday \cite[Page 2]{loday} and a proof can be found in Loday and Ronco's paper \cite[Section 2]{loday1998hopf}.

	\section{Realization of the associahedron}\label{sec:asso}
	
	In this section, we follow the method outlined in the introduction to construct Loday's associahedron as in \cite{loday}.
	In particular we explicitly describe its normal fan, a fan we called Loday fan.
	This fan can be seen as an example of a \emph{Cambrian} fan, studied by Reading and Speyer \cite{reading2009cambrian} and it is also an example of a \emph{permutreehedral} fan studied by Pilaud and Pons \cite{pilaud_pons}.
	We redo the construction here because particular details of it are relevant for our realization of the permuto-associahedron in Section \ref{sec:permasso}.
	
	\subsection{Vertices of Loday Associahedra} We begin by defining a set of points that are the candidates for vertices of the polytope.
	\begin{defn}\label{def:vector}
		Suppose $\balpha \in \R^{n}$ and $\TT \in \sT_{m,m}$ where $1 \le m \le n.$ Define
		\begin{equation}\label{eq:v_def}
			\setlength{\abovedisplayskip}{0pt}
			\setlength{\belowdisplayskip}{1pt}
			\val\left(\balpha, \TT\right) :=  \sum_{k=1}^{t} \alpha_k - \sum_{k=1}^{a} \alpha_k - \sum_{k=1}^{b} \alpha_k,
		\end{equation}
		where $t$, $a$, and $b$ are the number of total internal vertices in $\TT,$ the Left subtree of $\TT$, and the Right subtree of $\TT$, respectively. (Note that we have $t=a+b+1.$) 
		
		Given a (strictly increasing) sequence $\balpha \in \R^n$ and $\TT \in \sT_{n,n}$, we define 
		\[ 
		\setlength{\abovedisplayskip}{2pt}
		\setlength{\belowdisplayskip}{2pt}
		\v^\balpha_\TT := \sum_{i=1}^{n}\val\left(\balpha, \TT_{(i)}\right)\e_i = \left( \val\left(\balpha, \TT_{(1)}\right), \val\left(\balpha, \TT_{(2)}\right), \dots, \val\left(\balpha, \TT_{(n)}\right) \right),\]	
		where $\TT_{(i)}$ denotes the subtree of $\TT$ that is rooted at the internal vertex labeled by $i.$	
		%	(Here recall that $\TT_{(i)}$ denotes the subtree of $\TT$ that is rooted at the internal vertex labeled by $i.$)	
	\end{defn}
	%	Recall the affine space $U_d^\balpha$ is defined in \eqref{eq:defnU}. Using a telescoping argument, one can verify that $\v^\balpha_\TT$ is a point in $U_d^\balpha$.
	
	The definition given above is a generalization of Loday: Note that if we choose $\balpha=(1, 2, \dots, n),$ then the right hand side of \eqref{eq:v_def} becomes $(a+1)(b+1)$. This recovers the vertex coordinates of the associahedron constructed by Loday \cite{loday}. Another generalization of Loday's coordinates was given by Masuda, Thomas, Tonks, and Vallete \cite{masuda2021diagonal} using weights on the leaves (rather than on internal vertices as we do).
	The reason we consider our generalization is twofold: On the one hand we want a polytope related to $\Perm(\balpha)$, see Corollary \ref{cor:remove} below. On the other hand, we will need some flexibility on $\balpha$ later when we construct the permuto-associahedron.
	
	We remark that some of the definitions and results in this section will be stated using the variable $n$ in  which case the readers can assume $n=d+1$ (where $d$ is the dimension of the constructed associahedron).
	The reason to do this is that in later sections we used these ideas with $n=d$. 
	
	\begin{ex}
		Let $\balpha = (2,5,6,14,17,21,22,24)$ and $\TT$ be the complete binary tree on the left of Figure \ref{fig:preorders}. Then $\TT_{(3)}$ is the Left subtree of the root of $\TT$. It has $5$ internal vertices, and both of its Left subtree and Right subtree have $2$ internal vertices. Hence,
		\[ 
		\setlength{\abovedisplayskip}{2pt}
		\setlength{\belowdisplayskip}{2pt}
		\val\left( \balpha, \TT_{(3)} \right) =  \sum_{k=1}^{5} \alpha_k - \sum_{k=1}^{2} \alpha_k - \sum_{k=1}^{2} \alpha_k = 44-7-7 = 30,\]
		which gives the $3$rd entry of $\v^\balpha_\TT.$ We can compute the other entries similarly, and get $v_\TT^\balpha=(2,5,30,2,5,60,5,2)$. 
		
		We can check that the sum of coordinates of $v_\TT^\balpha$ is $111$ which is the same as that of $\balpha.$ Hence, $v_\TT^\balpha \in U_7^\balpha.$ This is true in general, as we state in the lemma below.
	\end{ex}

	\begin{lem}\label{lem:sumval}
		Suppose $\balpha \in \R^{n}$ and $\TT \in \sT_{n,n}$. 
		Let $\TT'$ be a subtree of $\TT$ with $t$ internal vertices, and suppose (by Lemma \ref{lem:label}~\eqref{itm:label-substract}) that $\II_\TT(\TT') = [c, c+t-1]$ for some integer $c.$ Then 
		\begin{equation}
			\setlength{\abovedisplayskip}{0pt}
			\setlength{\belowdisplayskip}{0pt}
			\sum_{i=c}^{c+t-1} \val(\balpha, \TT_{(i)}) = \sum_{k=1}^{t} \alpha_k. 
			\label{eqn:sumval}
		\end{equation}
		In particular, if $\TT' = \TT,$ we obtain that sum of the coordinates of $\v^\balpha_\TT$ is $\ds \sum_{k=1}^{n} \alpha_k$. Hence, $\v^\balpha_\TT$ is a point in $U_{n-1}^\balpha$. (Recall the affine space $U_d^\balpha$ is defined in \eqref{eq:defnU}.)
	\end{lem}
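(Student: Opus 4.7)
The plan is to prove the identity \eqref{eqn:sumval} by induction on $t$, the number of internal vertices of $\TT'$. The particular case $\TT'=\TT$ (where $c=1$, $t=n$) will then follow immediately, and membership in $U_{n-1}^\balpha$ reduces to the definition \eqref{eq:defnU}.

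For the base case $t=0$, the subtree $\TT'$ is a leaf, the integer interval $[c,c-1]$ is empty by convention, and both sides of \eqref{eqn:sumval} equal $0$. For the inductive step, assume $t\geq 1$ and that the identity holds for all subtrees of $\TT$ with strictly fewer internal vertices than $\TT'$. Let $r$ be the label of the root of $\TT'$, and let $L$ and $R$ denote its Left and Right subtrees. Let $a$ and $b$ be the numbers of internal vertices of $L$ and $R$ respectively, so that $a+b+1=t$. Here the main (and only nontrivial) step is identifying how the labels of internal vertices of $\TT$ distribute across the pieces of $\TT'$: by Lemma \ref{lem:label}~\eqref{itm:label-substract} and \eqref{itm:label-increase}, the labels $\II_\TT(L)$ and $\II_\TT(R)$ are integer intervals; since the labels in $L$ all precede $r$ and those in $R$ all exceed $r$ (by \eqref{itm:label-increase}), and together with $r$ they fill $[c,c+t-1]$, we must have
\[
\II_\TT(L)=[c,c+a-1], \qquad r=c+a, \qquad \II_\TT(R)=[c+a+1,c+t-1].
\]

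Using this decomposition, split the sum in \eqref{eqn:sumval} as
\[
\sum_{i=c}^{c+t-1}\val(\balpha,\TT_{(i)}) \;=\; \val(\balpha,\TT_{(c+a)}) \;+\; \sum_{i\in\II_\TT(L)}\val(\balpha,\TT_{(i)}) \;+\; \sum_{i\in\II_\TT(R)}\val(\balpha,\TT_{(i)}),
\]
where $\TT_{(c+a)}=\TT'$ itself. Applying the inductive hypothesis to $L$ (with parameters $c$ and $a$) and to $R$ (with parameters $c+a+1$ and $b$) gives that the two summed expressions equal $\sum_{k=1}^{a}\alpha_k$ and $\sum_{k=1}^{b}\alpha_k$ respectively. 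Meanwhile, $\val(\balpha,\TT')=\sum_{k=1}^{t}\alpha_k-\sum_{k=1}^{a}\alpha_k-\sum_{k=1}^{b}\alpha_k$ by Definition \ref{def:vector}, so adding the three contributions telescopes to $\sum_{k=1}^{t}\alpha_k$, completing the induction.

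Specializing to $\TT'=\TT$ yields $\sum_{i=1}^{n}\val(\balpha,\TT_{(i)})=\sum_{k=1}^{n}\alpha_k$; by Definition \ref{def:vector}, the left hand side is the coordinate sum $\langle\1,\v_\TT^\balpha\rangle$, hence $\v_\TT^\balpha\in U_{n-1}^\balpha$ by \eqref{eq:defnU}. The only conceptually delicate point is the label-interval bookkeeping in the inductive step, and that is handled cleanly by Lemma \ref{lem:label}.
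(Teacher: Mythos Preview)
Your proof is correct and follows essentially the same approach as the paper's: induction on $t$, with the inductive step splitting $\TT'$ into its root (labeled $c+a$), Left subtree, and Right subtree, then invoking the definition of $\val$ so that the partial sums telescope. The only cosmetic difference is that you are slightly more explicit in citing Lemma~\ref{lem:label}\eqref{itm:label-substract}--\eqref{itm:label-increase} to justify the label-interval decomposition, which the paper handles a bit more tersely.
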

	
	\begin{rem}\label{rem:sumval}
		If we let $I = \II_{\TT}(\TT'),$ then \eqref{eqn:sumval} can be rewritten as 
		%\begin{equation}
		$\langle \e_{I}, \v_\TT^\balpha \rangle  = \sum_{k=1}^{|I|} \alpha_k.$
		%	\label{eqn:sumval1}
		%\end{equation}
	\end{rem}
	
	\begin{proof}[Proof of Lemma \ref{lem:sumval}] We prove by induction on $t,$ the number of internal vertices in $\TT'.$ If $t=0,$ we have that $\II_\TT(\TT') = \emptyset$ or $[c, c-1]$. So \eqref{eqn:sumval} clearly holds.
		
		Suppose $t \ge 1$ and \eqref{eqn:sumval} holds for any $\TT'$ with less than $t$ internal vertices. Now we consider $\TT'$ has $t$ internal vertices. Let $\TT_L$ and $\TT_R$ be the Left and Right subtrees of $\TT'$ respectively, and suppose $a$ and $b$ are the number of internal vertices of $\TT_L$ and $\TT_R.$ (Note we have $t=a+b+1.$) Then by Lemma \ref{lem:label} (specifically numerals \eqref{itm:nonempty},\eqref{itm:label-substract}, and\eqref{itm:label-increase}), we must have that $\TT' = \TT_{(c+a)}$ is the subtree of $\TT$ rooted at the internal vertex with label $c+a,$ and  
		\[ \II_\TT(\TT_L) = [c, c+a-1] \quad \text{and} \quad \II_\TT(\TT_R) = [c+a+1, c+a+b]=[c+a+1,c+t-1].\]
		Since both $\TT_L$ and $\TT_R$ have less than $t$ internal vertices, by induction hypothesis,
		\[  
		\sum_{i=c}^{c+a-1} \val(\balpha, \TT_{(i)}) = \sum_{k=1}^{a} \alpha_k \quad \text{and} \quad  \sum_{i=c+a+1}^{c+t-1} \val(\balpha, \TT_{(i)}) = \sum_{k=1}^{b} \alpha_k. \]
		Summing these two equations and that 
		$\val(\balpha, \TT_{(c+a)}) = \val(\balpha, \TT')=\sum_{k=1}^{t} \alpha_k - \sum_{k=1}^{a} \alpha_k - \sum_{k=1}^{b} \alpha_k,$ we obtain \eqref{eqn:sumval}.  
	\end{proof}

	\begin{defn} Let $\balpha\in\R^{d+1}$ be a strictly increasing sequence. We define the following polytope 
		%		For $\balpha\in\NN^n$ with $n=d+1$ we define the following polytope
		\begin{equation}%\label{eq:associahedron}
			\setlength{\abovedisplayskip}{0pt}
			\loday(\balpha) := \conv\{\v^\balpha_\TT \ : \ \TT\in\sT_{d+1,d+1}\}\subseteq U_d^\balpha.
		\end{equation}
	\end{defn}		
	
	The polytope $\loday(\balpha)$ is called the \emph{Loday associahedron} in reference of Jean-Louis Loday who first study it in the case $\balpha = (1, 2, \dots, d+1)$.
	The following is the main theorem of this section.

	\begin{thm}\label{thm:loday}
		Let $\balpha\in\R^{d+1}$ be a strictly increasing sequence.
		Then the face poset of the Loday associahedron $\loday(\balpha)$ is $\cK_d$. (Recall that $\cK_d$ is defined in Definition \ref{def:poset}.) Moreover, $\loday(\balpha)$ is a $d$-associahedron, and is a generalized permutohedron as well.
	\end{thm}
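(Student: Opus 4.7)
The plan is to apply the three-step primal/dual strategy of the Introduction with $S = \sT_{d+1,d+1}$. The candidate vertex set is $\{v_\TT^\balpha : \TT \in \sT_{d+1,d+1}\} \subseteq U_d^\balpha$ (containment follows from Lemma~\ref{lem:sumval}), and the candidate collection of maximal normal cones is $\cM := \{\sigma(\TT) : \TT \in \sT_{d+1,d+1}\}$.

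First I would verify that $\cM$ is a simplicial conic dissection of $W_d$. Each $\sigma(\TT)$ is simplicial by Lemma~\ref{lem:poset_facts}\eqref{itm:simplicial}, since $\GG(\TT)$ is a tree for complete binary $\TT$. Because $\preceq_\TT$ is a partial order, Lemma~\ref{lem:poset_facts}\eqref{itm:union} gives $\sigma(\TT) = \bigcup_{\pi \in \LL[\TT]} \sigma(\pi)$, and Lemma~\ref{lem:unique} says that $\{\LL[\TT]\}_\TT$ partitions $\fS_{d+1}$; combined with Lemma~\ref{lem:braid_dissect} this yields $\bigcup_\TT \sigma(\TT) = W_d$. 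For disjointness of relative interiors, any $\w \in \sigma(\TT_1)^\circ \cap \sigma(\TT_2)^\circ$ may be perturbed within this open set to a $\w'$ with distinct coordinates, whose braid cone's permutation would then lie in $\LL[\TT_1] \cap \LL[\TT_2]$; Lemma~\ref{lem:unique} forces $\TT_1 = \TT_2$.

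The main obstacle is Step 3: proving $\langle \w, v_\TT^\balpha \rangle > \langle \w, v_{\TT'}^\balpha \rangle$ for every $\w \in \sigma(\TT)^\circ$ and every $\TT' \ne \TT$. The key computation is the effect of a single flip. If $\TT$ and $\TT'$ differ by flipping an internal edge whose upper and lower endpoints carry labels $M$ and $m$ in $\TT$, one checks directly from Definition~\ref{def:vector} that the $\val$-contributions of the three unaffected subtrees cancel between the two vertex formulas, leaving $v^\balpha_\TT - v^\balpha_{\TT'}$ a strictly positive multiple of $\e_M - \e_m$ (strict positivity uses that $\balpha$ is strictly increasing). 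Since $m \prec_\TT M$ is a covering relation, $\w \in \sigma(\TT)^\circ$ forces $w_M > w_m$ by Lemma~\ref{lem:poset_facts}\eqref{itm:interior}, so the inner product strictly decreases under the flip. To extend the comparison to an arbitrary $\TT' \ne \TT$, one argues that $\preceq_{\TT'}$ must have a covering relation violated by any $\pi \in \LL[\TT]$ (otherwise $\pi \in \LL[\TT']$ and Lemma~\ref{lem:unique} gives $\TT = \TT'$); flipping this violated edge strictly increases the inner product with $\w$, and iteration produces a monotone sequence of flips from $\TT'$ to $\TT$.

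Once Step 3 is in hand, Lemma~\ref{lem:main_tool} simultaneously identifies $\{v_\TT^\balpha\}$ as the vertex set of $\loday(\balpha)$ and exhibits $\Sigma(\loday(\balpha))$ as the fan induced by $\cM$. Since every maximal cone of this normal fan is a union of braid cones, $\loday(\balpha)$ is a generalized permutohedron. For the face poset, Lemma~\ref{lem:treecompatible} describes the faces of the simplicial cones $\sigma(\TT)$ as the cones $\sigma(\TT')$ for $\TT' \in \sT_{d+1}$ with $\TT \le_K \TT'$; combining this with Lemma~\ref{lem:anti} yields $\FL(\loday(\balpha)) \setminus \{\emptyset\} \cong (\sT_{d+1}, \le_K)$, and adjoining $\emptyset$ as $\hat 0$ gives $\cK_d$. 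Lemma~\ref{lem:FLdefineAsso} then concludes that $\loday(\balpha)$ is a $d$-associahedron.
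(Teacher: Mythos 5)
Your proposal follows essentially the same route as the paper: you reproduce the content of Lemma~\ref{lem:treefan} (conic dissection), Lemma~\ref{lem:comparedotprod} (the flip computation, which both you and the paper leave as a direct check), Corollary~\ref{cor:aux} (iterated flips to reach $\TT$), and then apply Lemma~\ref{lem:main_tool}, deriving the face poset via Lemma~\ref{lem:treecompatible} and Lemma~\ref{lem:anti} and the generalized-permutohedron claim from the union-of-braid-cones description, just as the paper does through Proposition~\ref{prop:loday} and Lemma~\ref{lem:lodayfan}. The only difference is presentational: you inline the intermediate lemmas rather than citing them.
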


	\subsection{Normal fan of Loday associahedra}\label{subsec:lodayfan}
	
	In this part we construct a conic dissection of $W_d$ and then apply Lemma \ref{lem:main_tool} to show that the conic dissection induces the normal fan of $\loday(\balpha).$
	
	Recall that in %Definition \ref{defn:treepreorder} 
	\S \ref{subsec:tree2cone}
	we define a cone $\sigma(\TT)\subset W_{n-1}$ for each tree $\TT\in\sT_n$.
	The following result gives us the conic dissection we need.
	
	\begin{lem}\label{lem:treefan}
		Each cone $\sigma(\TT)$ for $\TT \in \sT_{n,n}$ is a union of braid cones. Furthermore the collection of cones $\MM_{n-1}:=\{\sigma(\TT): \TT \in \sT_{n,n}\}$ is a simplicial conic dissection of $W_{n-1}$. 	
	\end{lem}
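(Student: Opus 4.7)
The plan is to split the lemma into four assertions: (a) each $\sigma(\TT)$ is simplicial, (b) each $\sigma(\TT)$ is a union of braid cones, (c) the collection $\MM_{n-1}$ covers $W_{n-1}$, and (d) the relative interiors of distinct cones in $\MM_{n-1}$ are disjoint. Parts (a)--(c) will follow almost immediately from the facts stated in Section~\ref{sec:prelim} and~\ref{sec:trees}; the real work is in (d).

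For (a) and (b): because $\TT\in\sT_{n,n}$ has $n$ internal vertices each carrying a distinct label by Lemma~\ref{lem:label}~\eqref{itm:nonempty}, the preorder $\preceq_\TT$ is antisymmetric, so it is a partial order whose Hasse diagram is the induced subgraph $\GG(\TT)$ on the internal vertices of $\TT$. Since $\GG(\TT)$ is a tree, Lemma~\ref{lem:poset_facts}~\eqref{itm:simplicial} yields that $\sigma(\TT)$ is simplicial, while Lemma~\ref{lem:poset_facts}~\eqref{itm:union} gives $\sigma(\TT)=\bigcup_{\pi\in\LL[\TT]}\sigma(\pi)$, proving (b). For (c), Lemma~\ref{lem:braid_dissect} says every $\w\in W_{n-1}$ lies in some braid cone $\sigma(\pi)$, and Lemma~\ref{lem:unique} supplies a (unique) $\TT\in\sT_{n,n}$ with $\pi\in\LL[\TT]$, so $\sigma(\pi)\subseteq\sigma(\TT)$ and thus $\w\in\bigcup_{\TT\in\sT_{n,n}}\sigma(\TT)$.

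The main obstacle is (d), which I will handle by a density/perturbation argument combined with the uniqueness in Lemma~\ref{lem:unique}. Suppose for contradiction that $\TT\ne\TT'$ and $\w\in\sigma(\TT)^\circ\cap\sigma(\TT')^\circ$. Both cones are full-dimensional in $W_{n-1}$ (they are simplicial with $n-1$ rays in an $(n-1)$-dimensional ambient space), so the intersection of their relative interiors is an open neighborhood of $\w$ in $W_{n-1}$. The set of points with pairwise distinct coordinates is dense in $W_{n-1}$, so I can pick such a point $\w'$ in this neighborhood. Then $\w'$ lies in the open braid cone $\sigma(\pi)^\circ$ for the unique permutation $\pi\in\fS_n$ recording its coordinate ordering. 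Since $\w'\in\sigma(\TT)$ and all its coordinates are distinct, every weak inequality $w'_i\le w'_j$ with $i\preceq_\TT j$ is actually strict, so $\pi$ is an order-preserving map $([n],\preceq_\TT)\to([n],\le)$, i.e.\ $\pi\in\LL[\TT]$. The same argument gives $\pi\in\LL[\TT']$, contradicting the uniqueness part of Lemma~\ref{lem:unique}. Together with (a)--(c), this establishes that $\MM_{n-1}$ is a simplicial conic dissection of $W_{n-1}$.
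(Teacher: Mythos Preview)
Your proof is correct and follows essentially the same route as the paper: the paper also deduces the union-of-braid-cones statement from Lemma~\ref{lem:poset_facts}~\eqref{itm:union} (after noting $\preceq_\TT$ is a partial order), and deduces the simplicial conic dissection statement from Lemma~\ref{lem:poset_facts}~\eqref{itm:simplicial} together with Lemmas~\ref{lem:unique} and~\ref{lem:braid_dissect}. The only difference is that the paper leaves the disjoint-interiors verification implicit, whereas you spell it out via the density/perturbation argument; your argument is exactly the kind of detail one would supply when unpacking the paper's one-line citation of Lemmas~\ref{lem:unique} and~\ref{lem:braid_dissect}.
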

	
	\begin{proof} 
		Note that for $\TT \in \sT_{n,n}$, the preorder $\preceq_{\TT}$ on $[n]$ is a partial order. Hence, the first statement follows from Lemma \ref{lem:poset_facts}~\eqref{itm:union}.  
		Then the second statement follows from part \eqref{itm:simplicial} of Lemma \ref{lem:poset_facts}, and Lemmas \ref{lem:unique} and \ref{lem:braid_dissect}. 
	\end{proof}

	We need one preliminary lemma before establishing the condition we need on $\MM_{n-1}$ and $\{ v_\TT^\balpha\}$ in order to apply Lemma \ref{lem:main_tool}.

	\begin{lem}\label{lem:comparedotprod}
		Let $\TT \in \sT_{n,n}$ and $\w \in \sigma^\circ_{\TT}$. For every $\TT' \in \sT_{n,n}$ that is different from $\TT,$ there exists $\TT''$ obtained from $\TT'$ by a flip of an internal edge such that
		$\langle \w,\v^\balpha_{\TT'} \rangle < \langle\w,\v^\balpha_{\TT''} \rangle.$
	\end{lem}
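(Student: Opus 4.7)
The plan is to exhibit an internal edge of $\TT'$ whose flip strictly raises the pairing with $\w$. First, I will compute the effect of a flip on $\v^\balpha$. Consider any internal edge of $\TT'$ joining a child $u$ (with label $i$) to its parent $v$ (with label $j$), and let $\TT''$ be the result of flipping this edge. Writing the local configuration of $\TT'$ as $(A\ast B)\ast C$ with $a$, $b$, $c$ internal vertices in $A$, $B$, $C$ respectively, the flip rearranges it as $A\ast(B\ast C)$. By Lemma~\ref{lem:label} the labels of the internal vertices within $A$, $B$, $C$ and of every vertex above $v$ are preserved; the only labels whose rooted subtrees change are $i$ and $j$, and they interchange roles, so that in $\TT''$ the parent endpoint of the flipped edge carries label $i$ and the child endpoint carries label $j$. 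Consequently $\val(\balpha, \TT'_{(k)}) = \val(\balpha, \TT''_{(k)})$ for every $k\notin\{i,j\}$, and a direct computation from Definition~\ref{def:vector} yields
\[ \val(\balpha, \TT''_{(i)}) - \val(\balpha, \TT'_{(i)}) \;=\; \sum_{m=1}^{c+1}\bigl(\alpha_{a+b+1+m}-\alpha_{b+m}\bigr) \;=\; -\bigl(\val(\balpha, \TT''_{(j)}) - \val(\balpha, \TT'_{(j)})\bigr). \]
Denoting this common value by $\Delta$, I obtain $\v^\balpha_{\TT''} - \v^\balpha_{\TT'} = \Delta\,(\e_i-\e_j)$, with $\Delta>0$ because $\balpha$ is strictly increasing.

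Next, I will show that, assuming $\TT\ne\TT'$, the condition $\w\in\sigma(\TT)^\circ$ forces $\w\notin\sigma(\TT')$. Suppose otherwise. By Lemma~\ref{lem:poset_facts}~\eqref{itm:union} there exists $\pi\in\LL[\TT']$ with $\w\in\sigma(\pi)$, so $\pi$ weakly sorts the coordinates of $\w$. Now for every strict relation $i\prec_\TT j$, Lemma~\ref{lem:poset_facts}~\eqref{itm:interior} together with transitivity gives $w_i<w_j$, which forces $\pi(i)<\pi(j)$; thus $\pi\in\LL[\TT]$ as well. But Lemma~\ref{lem:unique} associates to $\pi$ a unique complete binary tree in $\sT_{n,n}$, so $\TT=\TT'$, contradicting the assumption.

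Finally, since $\w\notin\sigma(\TT')$, the facet-defining description in Lemma~\ref{lem:poset_facts}~\eqref{itm:interior} produces a covering relation $i\lessdot j$ of $\preceq_{\TT'}$ with $w_i>w_j$; this is precisely an internal edge of $\TT'$ whose child carries label $i$ and whose parent carries label $j$. Letting $\TT''$ be the flip of this edge and pairing the identity derived above with $\w$ yields
\[ \langle \w, \v^\balpha_{\TT''}\rangle - \langle \w, \v^\balpha_{\TT'}\rangle \;=\; \Delta\,(w_i-w_j) \;>\; 0, \]
which is the desired inequality. The main obstacle lies in the flip computation of the first paragraph: one must carefully trace how the internal-vertex labeling interacts with the flip in order to identify the two coordinates of $\v^\balpha$ that move and to confirm they move by equal and opposite amounts. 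Once that is in hand, the remaining two steps follow essentially formally from the tools collected in Sections~\ref{sec:prelim} and~\ref{sec:trees}.
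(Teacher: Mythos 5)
Your proof is correct and follows the paper's own strategy: locate a violated covering inequality $w_i > w_j$ of $\sigma(\TT')$, flip the corresponding internal edge of $\TT'$, and observe that $\v^\balpha_{\TT''}-\v^\balpha_{\TT'}=\Delta(\e_i-\e_j)$ with $\Delta>0$, so the pairing with $\w$ strictly increases. The paper delegates this last flip computation to an arXiv reference, while you carry it out; the one thing left implicit is that your local picture $(A\ast B)\ast C \to A\ast(B\ast C)$ corresponds to flipping a Left internal edge of $\TT'$, and the Right-edge case $A\ast(B\ast C)\to(A\ast B)\ast C$ should be noted (it yields the same $\Delta=\sum_{m=1}^{c+1}(\alpha_{a+b+1+m}-\alpha_{b+m})>0$ with the child's coordinate again increasing, so the conclusion is unaffected). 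Your middle step, deducing $\w\notin\sigma(\TT')$ by chasing a linear extension through Lemma~\ref{lem:unique}, simply unfolds the conic-dissection argument the paper invokes via Lemma~\ref{lem:treefan}.
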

	
	\begin{proof}
		Let $\w\in \sigma^\circ(\TT)$. Lemma \ref{lem:treefan} states that $\MM_{n-1}$ is a conic dissection of $W_{n-1}.$ Hence, given that $\TT' \neq \TT,$ we must have that $\w \not\in \sigma(\TT').$ Using the description of a preorder cone given in Lemma \ref{lem:poset_facts}~\eqref{itm:interior}, we conclude that there exists a covering relation $j \cover_{\TT'} i$ in $\preceq_{\TT'}$ such that $x_i < x_j.$ 
		Let $e$ be the internal edge of $\TT'$ corresponding to this covering relation, and let $\TT''$ be the tree obtained from $\TT'$ by flipping $e.$ 
		It can be checked (see \cite[Lemma 4.8]{revisited-arxiv-v1}) that $\v^\alpha_{\TT'}-\v^{\alpha}_{\TT''}=\lambda (\e_i-\e_j)$ for some $\lambda>0$. Thus,
		\begin{equation*}
			\langle \w,\v^\balpha_{\TT'}- \v^\balpha_{\TT''} \rangle = \langle\w,\lambda (\e_i-\e_j) \rangle = \lambda (x_i - x_j) < 0. \qedhere
			\label{eq:computedotprod}
		\end{equation*}
	\end{proof}
	
	\begin{cor}\label{cor:aux}
		Let $\TT, \TT' \in \sT_{n,n}$ be two complete binary trees. Then for every $\w\in \sigma^\circ(\TT)$, we have $\langle \w,\v^\balpha_\TT\rangle \ge \langle\w,\v^\balpha_{\TT'} \rangle$, where the equality holds if and only if $\TT = \TT'$.
	\end{cor}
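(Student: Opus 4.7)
The plan is to derive Corollary \ref{cor:aux} from Lemma \ref{lem:comparedotprod} by an ascent argument on the finite set $\sT_{n,n}$. If $\TT' = \TT$ the equality is trivial, so I would focus on the case $\TT' \neq \TT$ and aim to prove the strict inequality $\langle \w, \v_\TT^\balpha\rangle > \langle \w, \v_{\TT'}^\balpha\rangle$.

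Fix $\w \in \sigma^\circ(\TT)$. Starting from $\TT_0 := \TT'$, I would use Lemma \ref{lem:comparedotprod} to inductively construct a sequence $\TT_0, \TT_1, \TT_2, \ldots$ of trees in $\sT_{n,n}$ with the property that each $\TT_{k+1}$ is obtained from $\TT_k$ by a single flip and
\[
\langle \w, \v_{\TT_k}^\balpha\rangle < \langle \w, \v_{\TT_{k+1}}^\balpha\rangle,
\]
continuing as long as $\TT_k \neq \TT$. Because the inner products strictly increase along the sequence, the trees $\TT_0, \TT_1, \TT_2, \ldots$ are pairwise distinct, and since $\sT_{n,n}$ is finite, the construction must terminate after finitely many steps. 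The only tree at which Lemma \ref{lem:comparedotprod} fails to produce a further ascent is $\TT$ itself, so the terminal tree must be $\TT$; that is, $\TT_k = \TT$ for some $k \ge 1$.

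Combining the strict inequalities along the chain yields
\[
\langle \w, \v_{\TT'}^\balpha\rangle = \langle \w, \v_{\TT_0}^\balpha\rangle < \langle \w, \v_{\TT_1}^\balpha\rangle < \cdots < \langle \w, \v_{\TT_k}^\balpha\rangle = \langle \w, \v_{\TT}^\balpha\rangle,
\]
which gives the desired strict inequality when $\TT' \neq \TT$, and hence the non-strict inequality in general with equality exactly when $\TT' = \TT$.

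I do not anticipate a serious obstacle: the only subtle point is to argue that the ascent process cannot get stuck at some tree $\TT^* \neq \TT$, which is immediate from Lemma \ref{lem:comparedotprod} (it guarantees an ascending flip from \emph{every} tree different from $\TT$), and cannot cycle, which is immediate from strict monotonicity of the inner products. The finiteness of $\sT_{n,n}$ then closes the argument.
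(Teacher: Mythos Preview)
Your proposal is correct and follows essentially the same approach as the paper: build an ascending sequence $\TT_0 = \TT', \TT_1, \TT_2, \dots$ using Lemma \ref{lem:comparedotprod}, note that finiteness of $\sT_{n,n}$ forces termination, and that termination can only occur at $\TT$. Your explicit observation that strict monotonicity makes the trees pairwise distinct is a nice clarification the paper leaves implicit.
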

	
	\begin{proof}
		It follows from Lemma \ref{lem:comparedotprod} that we can construct a sequence of complete binary trees in $\sT_{n,n}:$
		$\TT_0 = \TT', \TT_1, \TT_2, \dots,$
		such that for each $i,$ 
		$\langle \w,\v^\balpha_{\TT_i} \rangle < \langle\w,\v^\balpha_{\TT_{i+1}} \rangle.$
		The construction can continue as long as $\TT_i \neq \TT.$ Since $\sT_{n,n}$ is finite, this procedure must ends with a tree $\TT_k = \TT.$ Then the conclusion follows. 
	\end{proof}
	
	The following proposition is the key result of this section, characterizing the vertex set and the normal fan of the Loday associahedron. It also provides the main ingredients we need for proving Theorem \ref{thm:loday}. 
	
	\begin{prop}\label{prop:loday}
		Let $\balpha\in\R^{d+1}$ be a strictly increasing sequence. 
		\begin{enumerate}
			\item \label{itm:lodayfulldim} The Loday associahedron $\loday(\balpha)$ is full-dimensional in $U_d^\balpha$ and its vertex set is $\{v^\balpha_\TT : \TT\in\sT_{d+1,d+1}\}$.
			\item For each $\TT \in \sT_{d+1,d+1}$, we have $\sigma(\TT)=\ncone(v^\alpha_\TT,\loday(\balpha)).$
			
			\item\label{itm:lodaynormal} The normal fan of $\loday(\balpha)$ is $\lodayfan_d:=\{\sigma(\TT):\TT\in\sT_{d+1}\}.$
			Hence, $\lodayfan_d$ is a complete projective fan in $W_d.$
		\end{enumerate}
	\end{prop}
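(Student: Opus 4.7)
The plan is to apply Lemma \ref{lem:main_tool} directly, taking the conic dissection to be $\MM_d = \{\sigma(\TT) : \TT \in \sT_{d+1,d+1}\}$ and the candidate vertex set to be $\{v^\balpha_\TT : \TT \in \sT_{d+1,d+1}\}$. The two hypotheses of that lemma are essentially already packaged for us: Lemma \ref{lem:treefan} tells us that $\MM_d$ is a simplicial conic dissection of $W_d$, and Corollary \ref{cor:aux} delivers exactly the strict-maximum condition \eqref{eq:max}. So invoking Lemma \ref{lem:main_tool} immediately yields parts (1) and (2) of the proposition, together with the fact that $\Sigma(\loday(\balpha))$ is induced by $\MM_d$.

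For the full-dimensionality claim in (1), I will use the second sentence of Lemma \ref{lem:main_tool}: because every cone $\sigma(\TT)$ with $\TT \in \sT_{d+1,d+1}$ is simplicial (by Lemma \ref{lem:treefan}), $\MM_d$ is pointed, hence $0 \in \Sigma(\loday(\balpha))$ and $\loday(\balpha)$ is full-dimensional in $U_d^\balpha$.

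For part (3), I still need to identify the whole fan $\lodayfan_d$, not just its maximal cones. Since $\Sigma(\loday(\balpha))$ is induced by $\MM_d$, it consists of all faces of the maximal cones $\sigma(\TT)$ with $\TT \in \sT_{d+1,d+1}$. By Lemma \ref{lem:treecompatible}, the faces of such a $\sigma(\TT)$ are exactly the cones $\sigma(\TT')$ where $\TT'$ is a contraction of $\TT$, and hence $\TT' \in \sT_{d+1}$. Conversely, every $\TT' \in \sT_{d+1}$ is a contraction of some complete binary tree $\TT \in \sT_{d+1,d+1}$ (obtained by arbitrarily refining each internal vertex of higher out-degree into a binary sub-pattern), so $\sigma(\TT')$ appears as a face of a maximal cone. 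Together this shows $\Sigma(\loday(\balpha)) = \lodayfan_d$.

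Finally, none of the steps above looks like a genuine obstacle, since the technical heavy lifting has been done upstream in Lemma \ref{lem:treefan} and Corollary \ref{cor:aux}; the one place where I must be slightly careful is the back-and-forth between $\cK_d$ and the fan $\lodayfan_d$ at the face-poset level, i.e., making sure that the contraction described in Definition \ref{def:poset} matches the preorder contraction used in Lemma \ref{lem:treecompatible}. This is handled by the fact that contracting an internal edge of $\TT$ corresponds, under $\TT \mapsto \preceq_\TT$, to merging the two adjacent equivalence classes in the Hasse diagram of $\preceq_\TT$, which is precisely the notion of contraction of a preorder used in Lemma \ref{lem:poset_facts}\eqref{itm:faces}.
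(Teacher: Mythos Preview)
Your proposal is correct and follows essentially the same approach as the paper's own proof: both invoke Lemma~\ref{lem:treefan} and Corollary~\ref{cor:aux} to verify the hypotheses of Lemma~\ref{lem:main_tool}, thereby obtaining (1) and (2), and then identify the full fan $\lodayfan_d$ via Lemma~\ref{lem:treecompatible} and the observation that $\sT_{d+1}$ consists exactly of the contractions of trees in $\sT_{d+1,d+1}$. Your write-up is slightly more explicit in spelling out the pointedness argument for full-dimensionality and the two directions of the contraction correspondence, but the logic is identical.
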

	
	\begin{proof}
		It follows from Lemma \ref{lem:treefan} and Corollary \ref{cor:aux} that the set of cones $\MM_d=\{\sigma(\TT) : \TT\in\sT_{d+1,d+1}\}$ in $W_d$ and the set of points $\{ v^\balpha_\TT : \TT\in\sT_{d+1,d+1}\}$ in $U_d^\balpha$ satisfy the hypothesis of Lemma \ref{lem:main_tool}. Hence, we conclude that (1) and (2) are true, and that the $\MM_d$ induces the normal fan of $\loday(\balpha).$   
		Therefore, it is left to show that $\lodayfan_d$ is induced by $\MM_d$. However, this easily follows from Lemma \ref{lem:poset_facts}~\eqref{itm:faces}, Lemma \ref{lem:treecompatible}, and the fact that $\sT_{d+1}$ consists of all contractions of $\sT_{d+1,d+1}$.
	\end{proof}
	
	Because of the results given above, we call \begin{equation}\label{eq:lodayfan}
		\lodayfan_d:=\{\sigma(\TT):\TT\in\sT_{d+1}\}.
	\end{equation}  the \emph{Loday fan}. 
	The following lemma gives a characterization of the Loday fan.
	\begin{lem}\label{lem:lodayfan} The Loday fan $\lodayfan_d$ has the following properties:
		\begin{enumerate}
			\item\label{itm:lodayfandim} 
			The set $\{ \sigma(\TT): \TT \in \sT_{d+1,k}\}$ consists of all the $(k-1)$-dimensional cones in $\lodayfan_d.$
			\item\label{itm:lodayface} For any $\TT, \TT' \in \sT_{d+1},$ we have that $\sigma_{\TT'}$ is a face of $\sigma_{\TT}$ if and only if $\TT'$ is obtained from $\TT$ by contracting internal edges of $\TT.$ 
			\item\label{itm:lodayfaceposet} The face poset of $\lodayfan_d$ is isomorphic to the poset dual to  $\cK_d\setminus\hat{0}$. 
			\item\label{itm:lodaysimpli} The Loday fan $\lodayfan_d$ is a simplicial fan. 
			\item\label{itm:lodaycoarsen} The Loday fan $\lodayfan_d$ is a coarsening of the braid fan $\Br_d.$
			
		\end{enumerate}
	\end{lem}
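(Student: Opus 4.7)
All five parts follow from the dictionary $\TT \leftrightarrow \sigma(\TT)$ between $\sT_{d+1}$ and $\lodayfan_d$ furnished by Proposition \ref{prop:loday}\eqref{itm:lodaynormal}, combined with the preorder-cone facts of Lemma \ref{lem:poset_facts} and the tree-cone translation of Lemma \ref{lem:treecompatible}. I will handle the five parts in sequence; only \eqref{itm:lodaycoarsen} requires real work.

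For \eqref{itm:lodayfandim} I first compute $\dim \sigma(\TT)$. By Lemma \ref{lem:label}\eqref{itm:nonempty}, the internal-vertex labels of a tree $\TT \in \sT_{d+1,k}$ partition $[d+1]$ into exactly $k$ nonempty equivalence classes of the preorder $\preceq_\TT$. Hence $\sigma(\TT) = \sigma_{\preceq_\TT}$ lies in, and is full-dimensional in, the $(k-1)$-dimensional subspace of $W_d$ consisting of vectors constant on these classes, so $\dim \sigma(\TT) = k-1$. Injectivity of $\TT \mapsto \sigma(\TT)$ on $\sT_{d+1}$ then follows because $\sigma(\TT)$ determines the covering relations of $\preceq_\TT$ via the facet-defining inequalities of Lemma \ref{lem:poset_facts}\eqref{itm:interior}, and therefore determines the labeled Hasse diagram $\GG(\TT)$, from which $\TT$ itself can be uniquely reconstructed. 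Part \eqref{itm:lodayface} is then exactly the equivalence between \eqref{itm:tree_contraction} and \eqref{itm:tree_face} in Lemma \ref{lem:treecompatible}, and \eqref{itm:lodayfaceposet} drops out immediately: the face poset of $\lodayfan_d$ ordered by cone inclusion is, by \eqref{itm:lodayface}, the opposite of $(\sT_{d+1},\le_K) = \cK_d \setminus \{\hat 0\}$. For \eqref{itm:lodaysimpli} I observe that $\GG(\TT)$ is the subgraph of the tree $\TT$ induced on its internal vertices and is therefore itself a tree, so $\sigma(\TT)$ is simplicial by Lemma \ref{lem:poset_facts}\eqref{itm:simplicial}.

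The main obstacle is \eqref{itm:lodaycoarsen}, and my strategy is a pointwise argument. Fix $\TT \in \sT_{d+1}$ and $\w \in \sigma(\TT)$, and let $\tau \in \Br_d$ be the unique cone of the braid fan whose relative interior contains $\w$; by Remark \ref{rem:order2cone}, $\tau = \sigma(\cS)$ for the ordered set partition $\cS$ encoding the weak total order that $\w$ places on $[d+1]$. The relative-interior description of $\sigma(\cS)$ gives $w_i \le w_j \Leftrightarrow i \le_\cS j$ for the coordinates of $\w$; combined with $\w \in \sigma(\TT)$, every relation $i \preceq_\TT j$ forces $i \le_\cS j$. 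Hence every point of $\sigma(\cS) = \tau$ satisfies the defining inequalities of $\sigma_{\preceq_\TT} = \sigma(\TT)$, so $\w \in \tau \subseteq \sigma(\TT)$. Since $\w$ was arbitrary, $\sigma(\TT)$ is a union of cones of $\Br_d$, proving that $\lodayfan_d$ coarsens $\Br_d$. The main subtlety, in my view, lies in keeping straight that a finer preorder corresponds to a smaller cone; once the relative-interior characterization is in hand, no further computation is required.
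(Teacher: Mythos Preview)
Your argument is correct and, for parts \eqref{itm:lodayfandim}--\eqref{itm:lodaysimpli}, essentially identical to the paper's: both reduce everything to Lemma~\ref{lem:poset_facts}, Lemma~\ref{lem:treecompatible}, and the definition of $\cK_d$. Your added injectivity argument for $\TT\mapsto\sigma(\TT)$ is fine (and can be shortened: if $\sigma(\TT)=\sigma(\TT')$ then each is a face of the other, so Lemma~\ref{lem:treecompatible} gives $\TT\le_K\TT'\le_K\TT$).

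For \eqref{itm:lodaycoarsen} you take a slightly different route. The paper simply invokes Lemma~\ref{lem:treefan}, which already established that each \emph{maximal} cone $\sigma(\TT)$ with $\TT\in\sT_{d+1,d+1}$ is a union of braid cones (via Lemma~\ref{lem:poset_facts}\eqref{itm:union}, which applies because $\preceq_\TT$ is then a genuine partial order); since $\lodayfan_d$ is induced by these maximal cones, the coarsening follows. You instead give a direct pointwise argument valid for \emph{every} $\TT\in\sT_{d+1}$, showing that the minimal braid cone through any $\w\in\sigma(\TT)$ already lies in $\sigma(\TT)$. Your approach is self-contained and sidesteps the restriction in Lemma~\ref{lem:poset_facts}\eqref{itm:union} to partial orders; the paper's approach is shorter because the work was already packaged into Lemma~\ref{lem:treefan}.
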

	
	\begin{proof}
		One sees that \eqref{itm:lodayfandim}, \eqref{itm:lodayface}, and \eqref{itm:lodaysimpli} follow from Lemma \ref{lem:poset_facts} and the definition of $\sigma(\TT)$. 
		Item \eqref{itm:lodayfaceposet} follows from \eqref{itm:lodayface} and the definition of $\cK_d$ in Definition \ref{def:poset}.
		By Lemma \ref{lem:treefan}, each cone in $\MM_d$ is a union of braid cones. Since $\lodayfan_d$ is induced by $\MM_d,$ it must coarsen $\Br_d.$
	\end{proof}

	We are now ready to prove the main theorem of this section.	
	\begin{proof}[Proof of Theorem \ref{thm:loday}]
		
		By Proposition \ref{prop:loday}~\eqref{itm:lodaynormal} and Lemmas \ref{lem:lodayfan}~\eqref{itm:lodayfaceposet} and \ref{lem:anti}, we have that the face poset of $\loday(\balpha)$ is $\cK_d$.
		Moreover, by Lemma \ref{lem:FLdefineAsso}, we conclude that $\loday(\balpha)$ is a $d$-associahedron.
		Finally, it follows from Proposition \ref{prop:loday}~\eqref{itm:lodaynormal} and Lemma \ref{lem:lodayfan}~\eqref{itm:lodaycoarsen}  that $\loday(\balpha)$ is a generalized permutohedron.
	\end{proof}
	
	\subsection{Inequality description for Loday associahedra}
	
	It follows from Proposition \ref{prop:loday} that we can apply Lemma \ref{lem:det-ineq} to find an inequality description for the Loday associahedron $\loday(\balpha)$.
	Note that by Lemma \ref{lem:lodayfan}~\eqref{itm:lodayfandim}, the set $\{\sigma(\TT) : \TT \in \sT_{d+1,2}\}$ consists of all one dimensional cones of $\lodayfan_d$. 
	Thus, our first step is to choose a generator for each of the cones in this set.

	\begin{defn}	
		Let $\TT \in \sT_{d+1,2}.$ Since $\TT$ has two internal vertices, it has a unique non-root internal vertex, say $v.$ Then by Lemma \ref{lem:label}~\eqref{itm:label-substract}, the labels of $v$ is a nonempty integer interval, denoted by $\Itv(\TT)$, and called the \emph{defining interval} of $\TT.$
	\end{defn}
	
	The following proposition is straightforward.
	\begin{prop}\label{prop:rays_asso} 	
		The map $\TT \mapsto \Itv(\TT)$ gives a one-to-one correspondence between the set $\sT_{d+1,2}$ and the set 
		\begin{equation}
			\setlength{\abovedisplayskip}{1pt}
			\sI_d :=\{ I \ : \ I \text{ is an integer interval and } \emptyset \neq I \subsetneq [d+1]\}
			\label{eq:defnsI}
		\end{equation}
		
		Moreover, for each $\TT\in\sT_{d+1,2}$, if let $I = \Itv(\TT)$, then the one-dimensional cone $\sigma_{\TT} \in\lodayfan_d$ is generated by $\e_{\bar{I}}$, where $\bar{I} := [d+1] \setminus I$ is the complement of $I$.
		
	\end{prop}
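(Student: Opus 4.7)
My plan is to establish the two assertions in order, exploiting the fact that the one-dimensionality of $\sigma(\TT)$ for $\TT \in \sT_{d+1,2}$ is already ensured by Lemma \ref{lem:lodayfan}~\eqref{itm:lodayfandim}, so for the second claim I only need to identify the generating ray.

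For the bijection, given $\TT \in \sT_{d+1,2}$, the labels on the unique non-root internal vertex $v$ form the nonempty integer interval $\Itv(\TT)$ by Lemma \ref{lem:label}~\eqref{itm:label-substract}; it is a proper subset of $[d+1]$ because the root of $\TT$ must also carry at least one label by Lemma \ref{lem:label}~\eqref{itm:nonempty}. Hence $\Itv(\TT) \in \sI_d$. For the inverse, given $I = [a,b] \in \sI_d$, I would construct the plane rooted tree whose root has, in left-to-right order, children $\ell_1, \dots, \ell_{a-1}, v, \ell_{b+2}, \dots, \ell_{d+2}$ and whose vertex $v$ has children $\ell_a, \dots, \ell_{b+1}$. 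Because $a \leq b$ and $|I| \leq d$, both internal vertices have at least two children, so this tree lies in $\sT_{d+1,2}$; a direct check of the canonical labeling rule from \S\ref{subsec:labeling} gives $\Itv = I$. Uniqueness is automatic, since a tree in $\sT_{d+1,2}$ is determined as a plane rooted tree by the contiguous block of leaves sitting under its non-root internal vertex, so the correspondence is a bijection.

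For the ray, let $I := \Itv(\TT)$ and $\bar I := [d+1]\setminus I$. The induced subtree $\GG(\TT)$ has exactly two nodes, labeled $I$ and $\bar I$, joined by a single edge. Since $v$ sits below the root in $\TT$, the two equivalence classes of $\preceq_\TT$ are $I$ and $\bar I$ with $I \prec_\TT \bar I$, and the definition of preorder cones gives
\[
\sigma(\TT) = \left\{\w \in W_d \,:\, w_i = w_{i'}\ \forall i,i' \in I;\ w_j = w_{j'}\ \forall j,j' \in \bar I;\ w_i \leq w_j\ \forall i \in I, j \in \bar I\right\}.
\]
Any such $\w$ admits a representative $c_1 \e_I + c_2 \e_{\bar I}$ with $c_1 \leq c_2$. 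Since $\e_I + \e_{\bar I} = \1$ vanishes in $W_d$, subtracting $c_1 \1$ yields $\w \equiv (c_2 - c_1)\e_{\bar I}$ with $c_2 - c_1 \geq 0$, so $\sigma(\TT) = \R_{\geq 0}\, \e_{\bar I} = \cone(\e_{\bar I})$, completing the proof.

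The subtlest point is fixing the correct direction of the inequality, namely that $I \prec_\TT \bar I$ rather than $\bar I \prec_\TT I$, because this choice alone determines whether the ray is generated by $\e_{\bar I}$ or by $\e_I$. I would settle this by invoking the convention encoded in Remark \ref{rem:order2cone}: elements indexed by deeper internal vertices of $\TT$ correspond to earlier blocks of the associated ordered partition, and hence are strictly smaller in $\preceq_\TT$. Applied to $\TT \in \sT_{d+1,2}$, this forces $I \prec_\TT \bar I$.
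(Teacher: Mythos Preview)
Your proof is correct, and the paper itself gives no proof (it declares the proposition ``straightforward''), so there is nothing to compare against. One minor comment: your final paragraph appeals to Remark~\ref{rem:order2cone} to fix the direction $I \prec_\TT \bar I$, but this is unnecessarily indirect. The point follows immediately from the definition in \S\ref{subsec:tree2cone}: $\GG(\TT)$, drawn with the root of $\TT$ at the top, \emph{is} the Hasse diagram of $\preceq_\TT$, so the lower vertex $v$ (labeled $I$) sits below the root (labeled $\bar I$) in the partial order, giving $I \prec_\TT \bar I$ directly. This is confirmed elsewhere in the paper, e.g.\ in the proof of Lemma~\ref{lem:dimension_fancycones}, where minimal elements of the preorder are identified with minimal internal vertices of the tree.
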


	\begin{thm}\label{thm:ineqs} Let $\balpha\in\R^{d+1}$ be a strictly increasing sequence.
		Then we have the following minimal inequality description for $\loday(\balpha):$ 
		\begin{equation}\label{eq:inequalities}
			\setlength{\abovedisplayskip}{1pt}
			\setlength{\belowdisplayskip}{0pt}
			\loday(\balpha) = \left\{\x \in U^\balpha_d \ : \ \langle \e_{\bar{I}}, \x \rangle \le \sum_{i=|I|+1}^{d+1} \alpha_i,\quad \forall I \in \sI_d \right\}.
		\end{equation}
	\end{thm}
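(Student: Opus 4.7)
The plan is to apply Lemma \ref{lem:det-ineq} directly, using the data provided by Proposition \ref{prop:loday} (vertices and normal cones of $\loday(\balpha)$) and Proposition \ref{prop:rays_asso} (generators of the rays of $\lodayfan_d$). Proposition \ref{prop:loday}~\eqref{itm:lodayfulldim} gives that $\loday(\balpha)$ is full-dimensional in $U_d^\balpha$ with vertex set indexed by $\sT_{d+1,d+1}$, and part \eqref{itm:lodaynormal} together with Lemma \ref{lem:lodayfan}~\eqref{itm:lodayfandim} identifies the rays of the normal fan with $\sT_{d+1,2}$, which Proposition \ref{prop:rays_asso} then parametrizes by the intervals $I \in \sI_d$ with generator $\e_{\bar I}$. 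So Lemma \ref{lem:det-ineq} yields a minimal facet-defining description of the form
\[
\loday(\balpha) = \left\{\x \in U^\balpha_d \ : \ \langle \e_{\bar{I}}, \x \rangle \le c_I,\quad \forall I \in \sI_d \right\},
\]
where $c_I = \max_{\TT \in \sT_{d+1,d+1}} \langle \e_{\bar I}, \v_\TT^\balpha \rangle$. The remaining task is simply to evaluate $c_I = \sum_{k=|I|+1}^{d+1}\alpha_k$.

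To compute $c_I$, the second part of Lemma \ref{lem:det-ineq} says that we may attain the maximum at any vertex $\v^\balpha_{\TT_I}$ for which the ray $\R_{\ge 0}\e_{\bar I}$ lies in $\sigma(\TT_I)$. By Lemma \ref{lem:lodayfan}~\eqref{itm:lodayface} combined with Proposition \ref{prop:rays_asso}, this is equivalent to requiring that the unique tree $\TT_0 \in \sT_{d+1,2}$ with $\Itv(\TT_0) = I$ be a contraction of $\TT_I$. Writing $I = [a,b]$, such a $\TT_I \in \sT_{d+1,d+1}$ is easy to construct: take any complete binary tree on the $|I|+1$ consecutive leaves $\ell_a, \ldots, \ell_{b+1}$ as a subtree $\TT'$ (whose $|I|$ internal vertex labels, by Lemma \ref{lem:label}, are exactly $I$), and attach the remaining leaves on either side via any complete binary structure.

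With such a $\TT_I$ in hand, Remark \ref{rem:sumval} applied to the subtree $\TT'$ gives $\langle \e_I, \v_{\TT_I}^\balpha \rangle = \sum_{k=1}^{|I|} \alpha_k$. Since $\v_{\TT_I}^\balpha \in U_d^\balpha$ means $\langle \1, \v_{\TT_I}^\balpha \rangle = \sum_{k=1}^{d+1}\alpha_k$, subtracting yields $\langle \e_{\bar I}, \v_{\TT_I}^\balpha \rangle = \sum_{k=|I|+1}^{d+1}\alpha_k$, which is the desired value of $c_I$. There is no real obstacle here: the technical core is already packaged in Proposition \ref{prop:loday} and in the interval-sum identity of Lemma \ref{lem:sumval}; the only thing to verify carefully is the existence of at least one complete binary tree $\TT_I$ that refines $\TT_0$, which is immediate from the explicit construction above.
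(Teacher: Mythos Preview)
Your proposal is correct and follows essentially the same approach as the paper: apply Lemma \ref{lem:det-ineq} using the vertex and normal-cone data from Proposition \ref{prop:loday} and the ray generators from Proposition \ref{prop:rays_asso}, reduce the right-hand side computation to a single vertex $\v_{\TT_I}^\balpha$ with $\TT_I$ refining the two-internal-vertex tree corresponding to $I$, and evaluate $\langle \e_{\bar I}, \v_{\TT_I}^\balpha\rangle$ via Remark \ref{rem:sumval} and the affine constraint $\langle \1, \v_{\TT_I}^\balpha\rangle = \sum_k \alpha_k$. The only cosmetic difference is that you spell out an explicit construction of $\TT_I$, whereas the paper simply asserts the existence of such a tree.
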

	We remark that the inequality $\langle \e_{\bar{I}}, \x \rangle \le \sum_{i=|I|+1}^{d+1} \alpha_i$ is equivalent to $\langle \e_{I}, \x \rangle \ge \sum_{i=|1}^{|I|} \alpha_i$ since the sum of all coordinates is fixed.
	We present this way so that it is consistent with using outer normal vectors for the normal fan.
	
	\begin{proof}
		Applying the first part of Lemma \ref{lem:det-ineq} together with Proposition \ref{prop:rays_asso}, one sees that it is left to show for any $\TT' \in \sT_{d+1,2}$, if we let $I = \Itv(\TT'),$ then 
		$\ds \max_{\TT\in\sT_{d+1,d+1}}\langle \e_{\bar{I}},\v^\balpha_\TT\rangle = \sum_{i=|I|+1}^{d+1} \alpha_i.$
		We choose $\TT \in \sT_{d+1,d+1}$ such that $\TT \le_K \TT'$ in $\cK_d,$ equivalently, $\TT'$ can be obtained from $\TT$ by contracting internal edges. Thus, by Lemma \ref{lem:lodayfan}~\eqref{itm:lodayface}, we have $\sigma_{\TT'}$ is a face of $\sigma_{\TT},$ where the latter is the normal cone of $\loday(\balpha)$ at $\v_\TT^\balpha.$ Therefore, by the second part of Lemma \ref{lem:det-ineq}, we just need to show 
		$\langle \e_{\bar{I}},\v^\balpha_\TT\rangle = \sum_{i=|I|+1}^{d+1} \alpha_i.$

	By the definition of $I$ and %Lemma \ref{lem:label}~\eqref{itm:label-contract}, 
	how the internal vertices of trees in $\sT_{n}$ are labeled, 
	one sees that
	there exists a subtree $\TT_0$ of $\TT$ such that $I = \II_\TT(\TT_0).$
	Therefore, by Lemma \ref{lem:sumval} and Remark \ref{rem:sumval}, we have
	%\[   \langle \e_{I}, \v_\TT^\balpha \rangle  = \sum_{k=1}^{|I|} \alpha_k,  \quad \text{and} \quad    \langle \1, \v_\TT^\balpha \rangle  = \sum_{k=1}^{d+1} \alpha_k.\]  
	$\langle \e_{I}, \v_\TT^\balpha \rangle  = \sum_{k=1}^{|I|} \alpha_k,$ and $\langle \1, \v_\TT^\balpha \rangle  = \sum_{k=1}^{d+1} \alpha_k.$
	%Thus, \eqref{eqn:computemax} follows.
	Thus, the conclusion follows.
\end{proof}

Following terminology from V.Pilaud \cite{pilaud},
if a polytope is defined by a subset of a system of linear inequalities that defines a permutohedron $\Perm(\balpha)$, then we call it an $\balpha$-\emph{removohedron}.

\begin{cor}\label{cor:remove}
	Let $\balpha \in\R^{d+1}$ be a strictly increasing sequence.
	Then the associahedron $\loday(\balpha)$ is an $\balpha$-removohedron. 
\end{cor}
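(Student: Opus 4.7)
The plan is to directly compare the inequality description of $\loday(\balpha)$ given by Theorem \ref{thm:ineqs} with the inequality description of $\Perm(\balpha)$ given in \eqref{eq:perm_ineqs}, and verify that every facet-defining inequality of $\loday(\balpha)$ appears verbatim among the defining inequalities of $\Perm(\balpha)$.

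First I would set, for each $I \in \sI_d$, the subset $S := \bar{I} = [d+1] \setminus I$. Since $I$ is a nonempty proper integer interval of $[d+1]$, so is $S$ a nonempty proper subset of $[d+1]$, and therefore $S$ indexes a valid inequality in the description \eqref{eq:perm_ineqs} of $\Perm(\balpha)$. Next I would observe that $|S| = (d+1) - |I|$, which rewrites the bound appearing in the permutohedron inequality as
\[ \sum_{i=d+2-|S|}^{d+1} \alpha_i \;=\; \sum_{i=|I|+1}^{d+1} \alpha_i, \]
matching exactly the right-hand side of the corresponding inequality in \eqref{eq:inequalities}. Since the left-hand sides are both $\langle \e_{\bar I},\x\rangle$, the two inequalities coincide.

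Therefore the system \eqref{eq:inequalities} defining $\loday(\balpha)$ is precisely the subsystem of \eqref{eq:perm_ineqs} obtained by restricting the index $S$ to range over complements of integer intervals in $\sI_d$. By the definition of an $\balpha$-removohedron, this shows $\loday(\balpha)$ is an $\balpha$-removohedron.

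I do not anticipate any real obstacle here; the entire argument is a substitution of indices and a verification that $|I| + 1 = d+2 - |\bar I|$. The only thing worth double-checking is that we are allowed to use $S = \bar I$ as a valid index in \eqref{eq:perm_ineqs}, which holds because complementation is a bijection from nonempty proper subsets of $[d+1]$ to themselves and sends $\sI_d$ into the allowed index set.
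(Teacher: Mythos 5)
Your proposal is correct and matches the paper's own argument: it is precisely the comparison of the inequality description in Theorem \ref{thm:ineqs} with that of $\Perm(\balpha)$ in \eqref{eq:perm_ineqs}. You merely spell out the index substitution $S=\bar I$ and the identity $|I|+1=d+2-|\bar I|$, which the paper leaves implicit.
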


\begin{proof}
	Comparing the inequality description obtained in Theorem \ref{thm:ineqs} with the inequality description for $\Perm(\balpha)$ given in Equation \eqref{eq:perm_ineqs}, we see the conclusion follows.
\end{proof}

\begin{cor}\label{cor:minkowski}
	Let $\balpha=(\alpha_1,\dots,\alpha_{d+1}) \in\R^{d+1}$
	be a strictly increasing sequence and $\bdelta=(\delta_1,\dots,\delta_d)$, where $\delta_k=\alpha_{k}-\alpha_{k-1}$ for $k=1,2,\ldots,d+1$, the vector of first differences (and setting $\alpha_0=0$). We have the following description using Mikowski sums
	\begin{equation}\label{eq:minkowski}
		\loday(\balpha) = \delta_{d+1} \Delta_{[d+1]} + \sum_{I\in \sI_d}  \delta_{|I|}\Delta_I,
		\setlength{\belowdisplayskip}{0pt}
	\end{equation}
	where $\Delta_I:=\conv\{\e_i: i\in I\}\subset \R^{d+1}$. 
\end{cor}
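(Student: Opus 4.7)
My plan is to prove the identity $Q = \loday(\balpha)$, where $Q$ denotes the right-hand side of \eqref{eq:minkowski}, by a double inclusion based on the facet description from Theorem \ref{thm:ineqs}.

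For the inclusion $Q \subseteq \loday(\balpha)$, I will compare support functions at the facet normals $\e_{\bar{I}}$ ($I \in \sI_d$) of $\loday(\balpha)$ identified in Proposition \ref{prop:rays_asso}. Since support functions add over Minkowski sums and $h_{\Delta_J}(\e_{\bar{I}})$ equals $1$ when $J \not\subseteq I$ and $0$ otherwise, we obtain
\[
h_Q(\e_{\bar{I}}) \;=\; \sum_{K}\delta_{|K|} \;-\; \sum_{K \subseteq I}\delta_{|K|},
\]
where $K$ ranges over all integer intervals of $[d+1]$. Using the telescoping identity $\sum_{K \text{ interval of }[m]}\delta_{|K|} = \sum_{k=1}^m(m-k+1)\delta_k = \sum_{i=1}^m \alpha_i$, the two sums reduce to $\sum_{i=1}^{d+1}\alpha_i$ and $\sum_{i=1}^{|I|}\alpha_i$ respectively, so $h_Q(\e_{\bar{I}}) = \sum_{i=|I|+1}^{d+1}\alpha_i$, matching $h_{\loday(\balpha)}(\e_{\bar{I}})$ from Theorem \ref{thm:ineqs}. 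The same identity with $m=d+1$ shows every point of $Q$ has coordinate sum $\sum_i \alpha_i$, so $Q \subseteq U_d^\balpha$; combined, these yield $Q \subseteq \loday(\balpha)$.

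For the reverse inclusion, since $Q$ is convex it suffices to show $v_\TT^\balpha \in Q$ for every $\TT \in \sT_{d+1,d+1}$. For each integer interval $I \subseteq [d+1]$, I will let $f_I \in I$ denote the label of the root of the smallest subtree of $\TT$ whose internal-label set contains $I$; this is well defined because the family of subtree label sets is laminar. I will then establish the claim
\[
v_\TT^\balpha \;=\; \delta_{d+1}\,\e_{f_{[d+1]}} \;+\; \sum_{I \in \sI_d}\delta_{|I|}\,\e_{f_I},
\]
which manifestly exhibits $v_\TT^\balpha$ as a point of $Q$. Matching the coefficients of $\e_i$ on both sides reduces the claim to showing $\val(\balpha, \TT_{(i)}) = \sum_K \delta_{|K|}$, where $K$ ranges over integer intervals with $i \in K \subseteq \II_\TT(\TT_{(i)})$. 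Writing $t_i = a_i + b_i + 1$ as in Definition \ref{def:vector} and placing $i$ at offset $a_i$ inside $\II_\TT(\TT_{(i)})$, a direct enumeration of such $K$ by length $k$ yields a piecewise linear profile in $k$ (linearly increasing up to $k = \min(a_i,b_i)$, constant on a middle stretch, then linearly decreasing to $k = t_i$); expanding the three $\alpha$-sums in $\val(\balpha, \TT_{(i)}) = \sum_{k=1}^{t_i}\alpha_k - \sum_{k=1}^{a_i}\alpha_k - \sum_{k=1}^{b_i}\alpha_k$ in the basis of the $\delta_j$'s produces the very same profile.

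The main technical step is this last piecewise coefficient comparison: one must verify that the interval-count profile and the $\delta$-expansion profile of $\val$ agree in each of the three regimes and, in particular, at the transition $k = \max(a_i, b_i) + 1$, where the two parametrizations switch between their middle and outer cases.
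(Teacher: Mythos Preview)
Your argument is correct. The paper's own proof is a one-line appeal to Postnikov's \cite[Proposition~6.3]{bible}, which directly furnishes the inequality description of the Minkowski sum and then compares it with Theorem~\ref{thm:ineqs}. Your route is genuinely different and entirely self-contained: the first half (support-function computation at the normals $\e_{\bar I}$) reproduces by hand the content of that citation, while the second half goes further by exhibiting each $\v_\TT^\balpha$ explicitly as a sum of simplex vertices via the map $I\mapsto f_I$. This vertex decomposition is not in the paper's proof at all; it is closer in spirit to the vertex formulas of \cite[Theorem~7.9]{bible} mentioned in the remark following the corollary, and it has the bonus of showing concretely how the Loday coordinates arise from the Minkowski summands. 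The trade-off is length: the paper's proof is shorter because it outsources the work, whereas yours carries out the combinatorics directly. One small remark: in your profile description the increasing regime actually runs up to $k=\min(a_i,b_i)+1$ rather than $\min(a_i,b_i)$, but this does not affect the verification, which goes through exactly as you outline.
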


The corollary can be proved by applying \cite[Proposition 6.3]{bible} to give an inequality description for the Minkowski sum of simplices in the right hand side of Equation \eqref{eq:minkowski} and verifying that it is the same as \eqref{eq:inequalities}.

\begin{rem}
	If one takes Equation \eqref{eq:minkowski} as the definition for $\loday(\balpha)$, most of the constructions and results can be derived using results of \cite[Section 7]{bible}.
	In particular, \cite[Theorem 7.9]{bible} provides explicit coordinates for vertices of nestohedra and \cite[Theorem 7.10]{bible} describes the normal cone at each vertex.
	We write this section in the order as presented because this is how we come up with our construction and we want to use it as an example to demonstrate our methods of using Lemmas \ref{lem:main_tool} and \ref{lem:det-ineq}.
\end{rem}

In \cite{ceballos_santos_ziegler} the authors called any polytope of the form $\sum_{I} a_I\Delta_I$, where $a_I>0$ summed over all integer intervals, a \emph{Postnikov associahedron}. 
This family of polytopes contains Loday associahedra defined in this paper, but is strictly larger, since in Equation \eqref{eq:minkowski} the scalar factors are the same for integer intervals of the same size.
Furthermore, Corollary \ref{cor:remove} is not valid for Postnikov associahedra.

\section{Kapranov poset, partition labeled trees and their associated cones}
\label{sec:orderedtrees}

In this section we define the Kapranov poset in terms of trees labeled by partitions.
Then we associate cones to these trees by using our ideas from \cite{def-cone}.
Finally, we prove some basic properties of these cones which are fundamental for our construction of the permuto-associahedron in the next section.

\subsection{The Kapranov poset}
In this part, we will introduce the poset defined by Kapranov as a hybrid of the face poset of a permutohedron and the face poset of an associahedron, and define the permuto-associahedron abstractly. 

Recall the face poset of a usual $d$-permutohedron is the poset $\OO_{d+1}$ on ordered set partitions on $[d+1]$ (defined in Definition \ref{defn:osp}) and the face poset of a $d$-associahedron is the poset $\cK_d$ on strictly branching trees in $\sT_{d+1}$ (defined in Definition \ref{def:poset}). 
Below we construct Kapranov's poset as a poset on pairs of ordered set partitions and these trees.

\begin{defn}\label{defn:kap}
	Let $\sT_{\leq d}:=\bigcup_{i=0}^{d}\sT_i$ be the set of strictly branching trees with at most $d+1$ leaves.
	A \emph{($[d+1]$-)partition labeled tree} consists of a pair $(\cS,\TT)\in \OO_{d+1}\times\sT_{\leq d}$, where the tree $\TT$ has $k\leq d+1$ leaves and the partition $\cS$ has $k$ blocks $S_1,\ldots,S_k$ that we use to label the leaves of $\TT$ from left to right. The set of all $[d+1]$-partition labeled trees is denoted $\sP_d$. 
	
	For any $(\cS_1,\TT_1), (\cS_2,\TT_2)$ in $\sP_d$, we define $(\cS_1,\TT_1)\precdot_{KP}(\cS_2,\TT_2)$ if one of the following two conditions hold:
	\begin{enumerate}[label=\roman*.]
		\item \namedlabel{condition1} The tree $\TT_2$ is obtained from $\TT_1$ by contracting a single internal edge of $\TT_1$, and $\cS_1=\cS_2$.
		\item \namedlabel{condition2} There exists a minimal\footnote{An internal vertex is a \emph{minimal} internal vertex if all of its children are leaves.} internal vertex $p$ of $\TT_1$ such that $\TT_2$ is obtained from $\TT_1$ by contracting all edges between $p$ and its children in $\TT_1$ while labeling this new leaf with the union of the labels of the children of $p$ in $\TT_1$.
	\end{enumerate}
	
	This relation $\precdot_{KP}$ extends to a partial order $\le_{KP}$ on $\sP_d$ where $\precdot_{KP}$ is the covering relation. We also add a minimum element $\{ \hat{0} \}$ by declaring $\hat{0} \le_{KP} (\cS, \TT)$ for any $(\cS, \TT) \in \sP_d.$ Finally, we denote the poset $(\sP_d \cup \{\hat{0}\}, \le_{KP})$ by $\cK\Pi_d$ and call it the \emph{Kapranov poset}.
\end{defn}

\begin{ex}\label{ex:compatible}
	
	In Figure \ref{fig:permutree1} we present two elements of $\sP_8$ with partition labelings\\ $(3,7,4,8,9,6,1,5,2)$ and $(374,89,6,1,5,2)$ respectively.
	The one on the left is smaller than the one on the right in the Kapranov poset.

	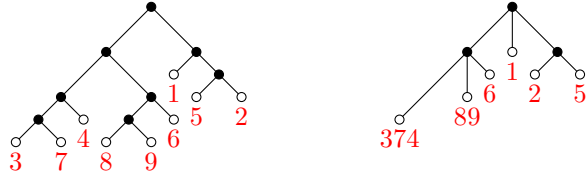
\begin{figure}[ht]
		\begin{tikzpicture}[scale=0.6]
			\draw (0,0)--(3,3)--(4.5,1.5)--(5,1) (1,1)--(1.5,0.5) (0.5,0.5)--(1,0) (2,2)--(3,1)--(2,0)
			(3,1)--(3.5,0.5) (2.5,0.5)--(3,0) (4,2)--(3.5,1.5) (4.5,1.5)--(4,1);
			
			\draw[fill=black] (0.5,0.5) circle (.1);
			\draw[fill=black] (1,1) circle (.1);
			\draw[fill=black] (2,2) circle (.1);
			\draw[fill=black] (2.5,0.5) circle (.1);
			\draw[fill=black] (3,1) circle (.1);
			\draw[fill=black] (3,3) circle (.1);
			\draw[fill=black] (4,2) circle (.1);
			\draw[fill=black] (4.5,1.5) circle (.1);
			\draw[fill=white] (0,0) circle (.1);
			\draw[fill=white] (1,0) circle (.1);
			\draw[fill=white] (1.5,0.5) circle (.1);
			\draw[fill=white] (2,0) circle (.1);
			\draw[fill=white] (3,0) circle (.1);
			\draw[fill=white] (3.5,0.5) circle (.1);
			\draw[fill=white] (3.5,1.5) circle (.1);
			\draw[fill=white] (4,1) circle (.1);
			\draw[fill=white] (5,1) circle (.1);

			\node[below] at (0,0) {\footnotesize \textcolor{red}{3}};
			\node[below] at (1,0) {\footnotesize \textcolor{red}{7}};
			%		\node[above] at (0.5,0.5) {1};
			%		\node[above] at (1,1) {2};
			\node[below] at (1.5,0.5) {\footnotesize \textcolor{red}{4}};
			%		\node[above] at (2,2) {3};
			\node[below] at (2,0) {\footnotesize \textcolor{red}{8}};
			\node[below] at (3,0) {\footnotesize \textcolor{red}{9}};
			%		\node[above] at (2.5,0.5) {4};
			%		\node[above=3 pt] at (3,1) {5};
			\node[below] at (3.5,0.5) {\footnotesize \textcolor{red}{6}};
			%		\node[above] at (3,3) {6};
			%		\node[above] at (4,2) {7};
			\node[below] at (3.5,1.5) {\footnotesize \textcolor{red}{1}};
			%		\node[above] at (4.5,1.5) {8};
			\node[below] at (4,1) {\footnotesize \textcolor{red}{5}};
			\node[below] at (5,1) {\footnotesize \textcolor{red}{2}};

			\begin{scope}[xshift=8cm]
				%			\draw (0.5,0.5)--(3,3)  (2,2)--(3,1) (3,3)--(4,2);
				\draw (0.5,0.5)--(3,3)--(4.5,1.5) (4,2)--(3.5,1.5) (3,3)--(3,2) (2,1)--(2,2) (2,2)--(2.5,1.5);
				
				\draw[fill=white] (0.5,0.5) circle (.1);
				%\draw[fill=black] (1,1) circle (.1);
				
				\draw[fill=black] (2,2) circle (.1);
				%		\draw[fill=white] (1.5,0.5) circle (.1);
				%		\draw[fill=white] (2.5,0.5) circle (.1);
				\draw[fill=white] (2,1) circle (.1);
				\draw[fill=white] (2.5,1.5) circle (.1);
				\draw[fill=black] (3,3) circle (.1);
				\draw[fill=black] (4,2) circle (.1);
				\draw[fill=white] (3.5,1.5) circle (.1);
				\draw[fill=white] (4.5,1.5) circle (.1);
				\draw[fill=white] (3,2) circle (.1);
				%\draw[fill=black] (5,1) circle (.1);
				
				\node[below] at (0.5,0.5) {\footnotesize $\textcolor{red}{374}$};
				%		\node[above] at (2,2) {3,5};
				%		\node[above left] at (2,1) {4};
				%		\node[above] at (3,3) {6,7};
				\node[below] at (2,1) {\footnotesize $\textcolor{red}{89}$};

				\node[below] at (2.5,1.5) {\footnotesize $\textcolor{red}{6}$};
				\node[below] at (3,2) {\footnotesize $\textcolor{red}{1}$};
				\node[below] at (3.5,1.5) {\footnotesize $\textcolor{red}{2}$};
				\node[below] at (4.5,1.5) {\footnotesize $\textcolor{red}{5}$};

			\end{scope}
		\end{tikzpicture}
		\caption{Two $[9]$-partition labeled trees}
		\label{fig:permutree1}
	\end{figure}
\end{ex}

The next lemma collects useful facts about the Kapranov poset.
Some are proven in \cite{kapranov} in a different language and some are straightforward, so we omit proofs.

\begin{lem}\label{lem:kp_poset} The Kapranov poset $\cK\Pi_d$ is graded of rank $d+1.$ Furtheremore, the following facts hold for its elements:
	\begin{enumerate}
		\item The rank of a $[d+1]$-partition labeled tree $(\cS,\TT)$ is $d-i(\TT)+1$, where $i(\TT)$ is the number of internal vertices of the tree. 
		\item It has a unique maximum element $(\cS_0,\TT_0)$ where $\cS_0$ is the trivial partition $([d+1])$, and
		$\TT_0$ is the only rooted tree on a single vertex which is the unique element in $\sT_0$. 
		\item\label{itm:rank1} The elements of rank $1$ are in bijection with $\fS_{d+1} \times \sT_{d,d}$. More precisely, each $(\pi,\TT) \in \fS_{d+1} \times \sT_{d,d}$ defines a rank-$1$ element $(\cS(\pi),\TT)$ of $\cK\Pi_d$, and every rank-$1$ element arises this way. (Recall that $\cS(\pi)$ is defined in \eqref{equ:cSpi}.) 
		\item\label{itm:rankd}The elements of rank $d$ are in bijection with $\overline{\sO_{d+1}}: = \bigcup_{k=2}^{d+1} \sO_{d+1,k}$, the set of all non-trivial ordered partitions of $[d+1]$. More precisely, each $\cS \in {\sO_{d+1,k}}$ (for some $2 \le k \le d+1$) defines a rank-$d$ element $(\cS, \TT_k)$ of $\cK\Pi_d,$ where $\TT_k$ is the unique tree with one internal vertex and $k$ leaves, and every rank-$d$ element of $\cK\Pi_d$ arises this way (for some $k$).
	\end{enumerate}
\end{lem}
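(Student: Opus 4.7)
The plan is to define a candidate rank function $\rho : \cK\Pi_d \to \Z_{\ge 0}$ by $\rho(\hat{0}) := 0$ and $\rho(\cS,\TT) := d - i(\TT) + 1$ for $(\cS,\TT) \in \sP_d$, where $i(\TT)$ denotes the number of internal vertices of $\TT$, and then verify that $\rho$ is a genuine rank function on $\cK\Pi_d$. Once this is in hand, all four items follow by directly unpacking which pairs $(\cS,\TT)$ achieve the relevant values of $\rho$.

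First I would check that each covering relation $\preccdot_{KP}$ on $\sP_d$ decreases $i(\TT)$ by exactly one: in case \ref{condition1} this is immediate since contracting a single internal edge merges two internal vertices into one; in case \ref{condition2}, the minimal internal vertex $p$ becomes a leaf while no other internal vertex is altered. Hence $\rho$ jumps by exactly one along every $\preccdot_{KP}$ relation. Since $\le_{KP}$ is generated by $\preccdot_{KP}$ and $\rho$ is monotone, no element can sit strictly between two $\preccdot_{KP}$-related elements, so $\preccdot_{KP}$ really is the covering relation of $\le_{KP}$. This is the one place where some care is needed, and it is the main obstacle to address; once $\rho$ is established as a rank function, everything else reduces to bookkeeping.

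Next I would use the standard bound $i(\TT) \le \ell(\TT) - 1$ for a strictly branching tree with $\ell(\TT)$ leaves (with equality iff $\TT$ is a complete binary tree), together with $\ell(\TT) \le d+1$ from $\TT \in \sT_{\le d}$, to conclude $0 \le i(\TT) \le d$. The upper bound $i(\TT) = d$ forces $\TT \in \sT_{d,d}$, while the lower bound $i(\TT) = 0$ forces $\TT = \TT_0$ and therefore $\cS = ([d+1]) = \cS_0$. Consequently the minimal elements of $(\sP_d,\le_{KP})$ are precisely those of $\rho$-value $1$, so $\hat{0}$ (once adjoined) covers exactly them; combined with the previous paragraph this establishes (1), (2), and the maximum statement (3).

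Finally, unpacking the extremal ranks yields (4) and (5). A rank-$1$ element $(\cS,\TT)$ has $\TT \in \sT_{d,d}$ with $d+1$ leaves, forcing $\cS \in \sO_{d+1,d+1}$ to consist of $d+1$ singletons; the bijection $\pi \mapsto \cS(\pi)$ from \eqref{equ:cSpi} then identifies the collection of such $\cS$ with $\fS_{d+1}$, giving the desired bijection with $\fS_{d+1} \times \sT_{d,d}$. A rank-$d$ element has $i(\TT) = 1$, so $\TT$ consists of a single internal (root) vertex with $k \ge 2$ leaf children for a unique $k \in \{2,\ldots,d+1\}$, whence $\TT = \TT_k$ and $\cS \in \sO_{d+1,k}$; ranging over $k$ delivers $\overline{\sO_{d+1}}$.
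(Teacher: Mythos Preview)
The paper itself omits this proof entirely, deferring to \cite{kapranov} and calling the remaining parts straightforward; so there is no paper argument to compare against, and your outline is exactly the natural way to fill in the details.

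There is one genuine gap. You assert that ``the minimal elements of $(\sP_d,\le_{KP})$ are precisely those of $\rho$-value $1$,'' but the sentence preceding it only characterizes the shapes achieving $i(\TT)=d$ and $i(\TT)=0$; it does not show that an element with $i(\TT)<d$ must lie above something in $\sP_d$. You need this to know that $\hat 0$ is covered only by elements with $\rho=1$, which is required for $\rho$ to be a rank function in the paper's sense. The missing step is short: if $(\cS,\TT)\in\sP_d$ has $i(\TT)<d$, then either some internal vertex of $\TT$ has at least three children (so one can un-contract an internal edge, reversing case~\ref{condition1}), or $\TT$ is a complete binary tree with $k+1\le d$ leaves, in which case some block $S_i$ of $\cS$ has $|S_i|\ge 2$ and one can split the corresponding leaf (reversing case~\ref{condition2}). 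Either way $(\cS,\TT)$ is not minimal in $\sP_d$.

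Two cosmetic points: your references to ``(1), (2), and the maximum statement (3)'' and later ``(4) and (5)'' do not match the lemma's numbering (there is no item (5); the maximum is item (2), rank-$1$ is item (3), rank-$d$ is item (4)). Also, the claim that $\TT\in\sT_{d,d}$ follows from $i(\TT)=d$ uses both the strictly-branching bound $i(\TT)\le \ell(\TT)-1$ and the constraint $\ell(\TT)\le d+1$, forcing equality in both; you state the ingredients but might make the two-step conclusion explicit.
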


\begin{defn}\label{def:permasso}
	A \emph{$d$-permuto-associahedron} is a $d$-dimensional polytope whose face poset is isomorphic to $\cK\Pi_d$.
\end{defn}

\subsection{Nested combinatorics}\label{sec:nested} In this part, we will review results on nested permutohedra and nested braid fans obtained in \cite{def-cone}. 
Recall that $\{\e_1,\dots,\e_{d+1}\}$ is the standard basis for $\mathbb{R}^{d+1}$. For any permutation $\pi \in \fS_{d+1}$ and integer $i\in[d]$, we define the vectors \[\ff^\pi_i := \e_{\pi^{-1}(i+1)} - \e_{\pi^{-1}(i)},\]  and the linear transformations $\DD^\pi: \R^{d+1} \to \R^{d}$ as
\[ \DD^\pi \w := ( \DD^\pi_1 \w, \DD^\pi_2 \w, \dots, \DD^\pi_d \w),\]
where the $i$th coordinate is
\[ 
\setlength{\abovedisplayskip}{0pt}
\DD^\pi_i \w := \langle \w, \ff^\pi_i \rangle = w_{\pi^{-1}(i+1)} - w_{\pi^{-1}(i)}.\]
If $\pi$ is the identity permutation, we may omit $\pi$ and write $\DD \w$ instead.

It is easy to verify that $\DD^\pi(\w) = \DD^\pi(\w + k \1)$ for any $k \in \R.$ Hence, we may consider the domain of the map $\DD^\pi$ is $W_d$, and $\DD^\pi$ is a map from $W_d$ to $\R^{d}$. 

We note that with the above definition, the braid cone can be expressed as
\[ \sigma(\pi) = \{ \w \in W_d \ : \ \DD^\pi \w \ge 0 \}.\]

\begin{ex}\label{ex:Dpi} Let $\pi = 791386245$. Then $\pi^{-1} = 374896152.$ Thus $\w \in \sigma(\pi)$ means that 
	\[
	\begin{array}{cccc}
		\DD^\pi_1\w=w_7-w_3 \ge 0, &\DD^\pi_3\w=w_8-w_4 \ge 0, &\DD^\pi_5\w=w_6-w_9 \ge0, &\DD^\pi_7\w=w_2-w_1 \ge0,\\
		\DD^\pi_2\w=w_4-x_7\ge0, &\DD^\pi_4\w=w_9-w_8\ge0, &\DD^\pi_6\w=w_1-w_6 \ge0,  &\DD^\pi_8\w=w_5-w_2 \ge0,
	\end{array}
	\]
	or equivalently,
	\[ 
	\setlength{\abovedisplayskip}{0pt}
	w_3 \le w_7 \le w_4 \le w_8 \le w_9 \le w_6 \le w_1 \le w_5 \le w_2.\] 
\end{ex}

For convenience, for $\w \in \sigma(\pi),$ we often let $u_i = w_{\pi^{-1}(i)}$ for each $i$, which allows us to express $\w$ as
\begin{equation}\label{equ:wexp0}
	\setlength{\abovedisplayskip}{0pt}
	\w = \sum_{i=1}^{d+1} u_{i}\e_{\pi^{-1}(i)}.
\end{equation}
Then one sees that $\DD^\pi \w = \DD \u$ (where $\u=(u_1,\dots, u_{d+1})$).

Given strictly increasing sequences $\balpha = (\alpha_1, \alpha_2, \dots, \alpha_{d+1}) \in \R^{d+1}$ and $\bbeta = (\beta_1, \beta_2, \dots, \beta_{d}) \in \R^{d}$,
for any $(\pi, \tau) \in \fS_{d+1} \times \fS_d$, we define
\begin{equation}\label{equ:defnv}
	\v_{\pi,\tau}^{(\balpha,\bbeta)} := \sum_{i=1}^{d+1} \alpha_i \e_{\pi^{-1}(i)} + \sum_{i=1}^d \beta_i \ff_{\tau^{-1}(i)}^\pi\in U^\balpha_d.
\end{equation}

We call $(\balpha,\bbeta)$ an \emph{appropriate pair} (of strictly increasing sequences) if for any $(\pi,\tau),$ when we write $\v_{\pi,\tau}^{(\balpha,\bbeta)}=\sum \gamma_i \e_{\pi^{-1}(i)}$ we have $\gamma_1<\gamma_2<\dots<\gamma_{d+1}$. It is not hard to see, by scaling the vector $\balpha$ if necessary, that appropriate pairs exist.
Then for any appropriate pair $(\balpha,\bbeta)$, we define the \emph{usual nested permutohedron} (associated with the pair $(\balpha, \bbeta)$) to be
\begin{equation}
	\Perm(\balpha, \bbeta) := \conv\left(v_{\pi,\tau}^{(\balpha,\bbeta)}~:~(\pi,\tau) \in \fS_{d+1}\times \fS_d\right).
	\label{equ:defnusual2}
\end{equation}

A \emph{generalized nested permtuhohedron} is a deformation of a usual nested permutohedron. 
We need the following definition before defining \emph{nested braid cones}.	
\begin{defn}\label{def:check}
	Let $\tau \in \fS_d.$ We define $\check{\sigma}(\tau):=\{\x\in \R^{d} \ :\ x_{\tau^{-1}(1)}\le x_{\tau^{-1}(2)} \le \dots \le x_{\tau^{-1}(d)}\}$.
	%\[\check{\sigma}(\tau):=\{\x\in \R^{d} \ :\ x_{\tau^{-1}(1)}\le x_{\tau^{-1}(2)} \le \dots \le x_{\tau^{-1}(d)}\}.\]
\end{defn}

Clearly $\check{\sigma}(\tau)$ maps to $\sigma(\tau)$ under the quotient map $\R^{d}\rightarrow W_{d-1}$.	
Notice that $\check{\sigma}(\tau)$ is not pointed as it contains the line spanned by $\1$.
Similar to Lemma \ref{lem:braid_dissect}, the collection of cones $\{\check{\sigma}(\tau): \pi\in \fS_{d}\}$ forms a conic dissection of $\R^{d}.$

\begin{defn}\label{defn:NBF}
	For each $(\pi,\tau)\in \fS_{d+1}\times \fS_{d}$, we define the \emph{nested braid cone} $\sigma(\pi,\tau)$ to be
	\begin{equation} \label{eq:nested_cone}
		\sigma(\pi, \tau) :=\left\{ \w \in W_d \ : \ 
		\begin{array}{c}
			\w \in \sigma(\pi),  \\
			\DD^\pi \w \in \check{\sigma}(\tau). 
		\end{array}
		\right\}
	\end{equation}
	Note here $\sigma(\pi)$ is in $W_d$ and $\check{\sigma}(\tau)$ is in $\R^d$. %(Recall that $\check{\sigma}(\tau)$ is defined in Remark \ref{rem:check}.)	
	
\end{defn}
One can check that $\sigma(\pi,\tau)$ is a well-defined $d$-dimensional cone in $W_d$, and has a minimal inequality description:

\[ \sigma(\pi, \tau)  = \left\{ \w \in W_d \ : \ 
0 \le \DD^\pi_{\tau^{-1}(1)}\w \leq \DD^\pi_{\tau^{-1}(2)}\w \leq \dots \leq  \DD^\pi_{\tau^{-1}(d)}\w
\right\}.
\]
Therefore, the relative interior of $\sigma(\pi,\tau)$ is given by
\[ \sigma^\circ(\pi, \tau)  = \left\{ \w \in W_d \ : \ 
0 < \DD^\pi_{\tau^{-1}(1)}\w < \DD^\pi_{\tau^{-1}(2)}\w < \dots <  \DD^\pi_{\tau^{-1}(d)}\w \right\}.
\]

Let $\MB^2_d := \{\sigma(\pi,\tau): (\pi, \tau) \in \fS_{d+1}\times \fS_{d}\}$ be the collection of nested braid cones in $W_d.$  In \cite{def-cone} we proved that $\MB^2_d$ induces a projective fan $\Br^2_d$ which we call the \emph{nested braid fan}. More precisely, in \cite[Proposition 4.6]{def-cone} we prove that for any appropriate pair $(\balpha, \bbeta)$, the normal fan of $\Perm(\balpha, \bbeta)$ is equal to $\Br_d^2$. Thus a polytope $P$ is a generalized nested permutohedron if and only if its normal fan coarsens the nested braid fan $\Br^2_d$ for some $d$. 

Analogous to Lemma \ref{lem:braid_dissect} we have the following.

\begin{lem}\label{lem:nested_braid_dissect}
	The collection of nested braid cones $\MB^2_d$ forms a conic dissection of $W_d.$
\end{lem}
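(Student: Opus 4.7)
The plan is to verify the three conditions in Definition \ref{def:dissection} for $\MB^2_d$: that each $\sigma(\pi,\tau)$ is full-dimensional in $W_d$, that these cones cover $W_d$, and that their relative interiors are pairwise disjoint. The strategy throughout is to ``lift'' arguments from the two known conic dissections in play, namely the braid dissection $\{\sigma(\pi):\pi\in\fS_{d+1}\}$ of $W_d$ (Lemma \ref{lem:braid_dissect}) and the analogous dissection $\{\check{\sigma}(\tau):\tau\in\fS_d\}$ of $\R^d$, using the explicit description of $\sigma(\pi,\tau)$ by the chain $0\le \DD^\pi_{\tau^{-1}(1)}\w\le\cdots\le \DD^\pi_{\tau^{-1}(d)}\w$ stated just before the lemma.

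First I would record the explicit description of $\sigma^\circ(\pi,\tau)$ as the set of $\w\in W_d$ satisfying $0<\DD^\pi_{\tau^{-1}(1)}\w<\cdots<\DD^\pi_{\tau^{-1}(d)}\w$. This set is clearly non-empty (pick any $\u=(u_1,\dots,u_{d+1})$ with $u_1<u_2<\cdots<u_{d+1}$ whose consecutive differences are strictly increasing, and build $\w$ via \eqref{equ:wexp0}), which gives full-dimensionality at once. From the same description, the key equivalence I will use repeatedly is
\[
\w\in\sigma^\circ(\pi,\tau)\quad\Longleftrightarrow\quad \w\in\sigma^\circ(\pi)\ \text{and}\ \DD^\pi\w\in\check{\sigma}^\circ(\tau),
\]
the ``$\Rightarrow$'' direction being immediate since the chain of strict inequalities forces every $\DD^\pi_i\w$ to be positive (hence $\w\in\sigma^\circ(\pi)$ by \eqref{eq:braid_open_defi}), and the ``$\Leftarrow$'' direction following because positivity of all $\DD^\pi_i\w$ together with the strict $\check{\sigma}(\tau)$-chain gives exactly the relative-interior chain for $\sigma(\pi,\tau)$.

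For the covering condition, I would start with an arbitrary $\w\in W_d$, invoke Lemma \ref{lem:braid_dissect} to find $\pi\in\fS_{d+1}$ with $\w\in\sigma(\pi)$, so that $\DD^\pi\w\in\R^d$ has non-negative coordinates; then apply the fact (observed just after Definition \ref{def:check}) that $\{\check{\sigma}(\tau):\tau\in\fS_d\}$ is a conic dissection of $\R^d$ to find $\tau\in\fS_d$ with $\DD^\pi\w\in\check{\sigma}(\tau)$. By the definition \eqref{eq:nested_cone}, this puts $\w\in\sigma(\pi,\tau)$.

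Finally, for the disjointness of relative interiors, suppose $\w\in\sigma^\circ(\pi_1,\tau_1)\cap\sigma^\circ(\pi_2,\tau_2)$. By the equivalence above, $\w\in\sigma^\circ(\pi_1)\cap\sigma^\circ(\pi_2)$, and since braid cones form a conic dissection of $W_d$ we get $\pi_1=\pi_2=:\pi$. Then, again by the equivalence, $\DD^\pi\w\in\check{\sigma}^\circ(\tau_1)\cap\check{\sigma}^\circ(\tau_2)$, and the conic-dissection property of $\{\check{\sigma}(\tau)\}$ forces $\tau_1=\tau_2$. The only point that requires any care is the equivalence above --- everything else is a direct transfer from the two known dissections --- so that is where I would be most careful in the write-up.
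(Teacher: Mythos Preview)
Your proof is correct. The paper itself does not supply a proof of this lemma; it simply announces it with the phrase ``Analogous to Lemma \ref{lem:braid_dissect} we have the following,'' treating the result as an immediate consequence of the two dissections $\{\sigma(\pi)\}$ of $W_d$ and $\{\check{\sigma}(\tau)\}$ of $\R^d$ together with the defining formula \eqref{eq:nested_cone}. Your write-up is exactly the natural unpacking of that analogy: the covering step uses the two ambient dissections in turn, and the disjointness step rests on the equivalence $\w\in\sigma^\circ(\pi,\tau)\Leftrightarrow \w\in\sigma^\circ(\pi)$ and $\DD^\pi\w\in\check{\sigma}^\circ(\tau)$, which you correctly derive from the explicit chain description of $\sigma^\circ(\pi,\tau)$ given just before the lemma. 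There is nothing to add.
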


Recall we define $\e_S = \sum_{i \in S} \e_i$ for each $S\subseteq[d+1]$. For each element $\cS = (S_1,\dots,S_k) \in \OO_{d+1}$, we define
\begin{equation}
	\e_{\cS} := \sum_{i} i \e_{S_i}.
	\label{equ:defneT}
\end{equation}
We also define the \emph{type} of $\cS$, denoted by $\Type(\cS)$, 
to be the sequence $(t_1, t_2, \dots, t_{k-1}),$ where
\[ t_i = \sum_{j=1}^i |S_j|, \quad \text{for $0 \le i \le k$}.\]
The following theorem gives inequality descriptions for usual nested permutohedra, recalling that $\overline{\sO_{d+1}}= \bigcup_{i=2}^{d+1} \sO_{d+1,i}$ is the set of all non-trivial ordered partitions of $[d+1]:$

\begin{thm}[Theorem 4.20 in \cite{def-cone}]\label{thm:facetdes}
	Suppose $(\balpha, \bbeta) \in \R^{d+1} \times \R^d$ is a pair of strictly increasing sequences $(\balpha, \bbeta) \in \R^{d+1} \times \R^d$ that is appropriate.  Suppose $\mathbf{b} \in \R^{\overline{\sO_{d+1}}}$ is defined as follows: for each $\cS \in \overline{\sO_{d+1}},$ 
	if $\Type(\cS)=(t_1,t_2,\dots, t_{k-1}),$ 
	let
	\begin{equation}
		b_{\cS} =  \left( \sum_{i=1}^{k} i \sum_{j=t_{i-1}+1}^{t_i} \alpha_j\right) + \sum_{j=d+2-k}^d \beta_j, 
		\label{equ:usualb}
	\end{equation} 
	where by convention we set $t_0=0$ and $t_k = d+1.$ 
	Then we have the following minimal inequality description for $\Perm(\balpha, \bbeta)$: 
	\begin{equation}\label{eq:nested_perm_ineqs}
		\Perm(\balpha, \bbeta) = \left\{\x\in U_d^\balpha \ : \ \langle \e_{\cS}, \x \rangle \leq b_{\cS},\quad \forall  \cS\in\overline{\sO_{d+1}}\right\}.
	\end{equation}
\end{thm}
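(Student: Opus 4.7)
The plan is to apply Lemma \ref{lem:det-ineq}, using as input the characterization of the normal fan of $\Perm(\balpha,\bbeta)$ given in \cite[Proposition 4.6]{def-cone}: the normal fan is the nested braid fan $\Br_d^2$, and $\ncone(v_{\pi,\tau}^{(\balpha,\bbeta)},\Perm(\balpha,\bbeta)) = \sigma(\pi,\tau)$ for each $(\pi,\tau)\in \fS_{d+1}\times\fS_d$. To invoke the lemma we must (i) identify the rays of $\Br_d^2$ together with a generator for each, and (ii) for each ray generator $\n$, compute $\max_{(\pi,\tau)}\langle \n, v^{(\balpha,\bbeta)}_{\pi,\tau}\rangle$.

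For step (i), I would use that the linear isomorphism $\DD^\pi : W_d \to \R^d$ identifies each $\sigma(\pi,\tau)$ with the simplicial cone $\{0\le x_{\tau^{-1}(1)}\le\cdots\le x_{\tau^{-1}(d)}\}$ in $\R^d$. This simplicial cone has exactly $d$ extreme rays, obtained by saturating all but one of its defining inequalities; pulling each back through $(\DD^\pi)^{-1}$ yields a ray in $W_d$ whose generator is constant along the $\pi$-order on consecutive blocks, with increments of size $1$ across blocks, the block structure being determined by the position where the inequality was left out and by $\tau$. Running over all $(\pi,\tau)$ and all of the $d$ possible choices, these rays are exactly $\{\R_{\ge 0}\e_\cS : \cS\in\overline{\sO_{d+1}}\}$; in particular the number of rays is $|\overline{\sO_{d+1}}|$, which will make the forthcoming inequality description minimal.

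For step (ii), fix $\cS=(S_1,\ldots,S_k)\in \overline{\sO_{d+1}}$ with $\Type(\cS)=(t_1,\ldots,t_{k-1})$ and choose a convenient pair $(\pi,\tau)$ whose cone $\sigma(\pi,\tau)$ contains $\R_{\ge 0}\,\e_\cS$: take any $\pi$ with $\pi^{-1}([t_{r-1}+1,t_r])=S_r$ for each $r$ (so that $\e_\cS$ is nondecreasing along the $\pi$-order) and any $\tau$ sending the ``jump indices'' $\{t_1,\ldots,t_{k-1}\}$ bijectively onto the last $k-1$ positions $\{d+2-k,\ldots,d\}$ (so that the nonzero differences of $\DD^\pi\e_\cS$ come last in $\tau$-order). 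The second part of Lemma \ref{lem:det-ineq} then identifies $b_\cS$ with $\langle\e_\cS, v^{(\balpha,\bbeta)}_{\pi,\tau}\rangle$. Expanding via \eqref{equ:defnv} splits this into two contributions: $\langle\e_\cS,\e_{\pi^{-1}(i)}\rangle$ equals the block-label $r$ of $\pi^{-1}(i)$, producing the $\alpha$-sum $\sum_{r=1}^k r\sum_{j=t_{r-1}+1}^{t_r}\alpha_j$; while $\langle\e_\cS,\ff^\pi_j\rangle$ is $1$ exactly when $j\in\{t_1,\ldots,t_{k-1}\}$, and the choice of $\tau$ collapses the $\beta$-contribution to $\beta_{d+2-k}+\cdots+\beta_d$. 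Summing reproduces the right-hand side of \eqref{equ:usualb}.

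The main obstacle is step (i): a careful inspection of how the rays of the simplicial cone $\DD^\pi(\sigma(\pi,\tau))$ pull back, together with the verification that two triples $(\pi,\tau,j)$ produce the same ray in $W_d$ precisely when they encode the same ordered partition $\cS$. Once that bijection and the identification of $\e_\cS$ as the ray generator are in hand, the rest of the argument is the direct bookkeeping computation outlined above.
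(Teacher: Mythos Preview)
The paper does not actually prove this theorem: it is quoted verbatim as Theorem~4.20 of the authors' earlier paper \cite{def-cone} and used as a black box in Section~\ref{sec:nested}. So there is no proof in the present paper to compare against.

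That said, your proposal is a correct proof, and it is exactly in the spirit of how the authors operate elsewhere in this paper (cf.\ the proofs of Theorems~\ref{thm:ineqs} and~\ref{thm:ineqs_permasso}, which apply Lemma~\ref{lem:det-ineq} in the same way). A few remarks on your outline:
\begin{itemize}
\item Your step~(i) is right but deserves the care you flag. The clean way to see it is to fix $(\pi,\tau)$ and note that $\sigma(\pi,\tau)$ is simplicial with the $d$ defining inequalities $0\le \DD^\pi_{\tau^{-1}(1)}\w$ and $\DD^\pi_{\tau^{-1}(j-1)}\w\le \DD^\pi_{\tau^{-1}(j)}\w$ for $2\le j\le d$. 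Turning all but the $j$-th into equalities forces $\DD^\pi_m\w$ to equal $0$ on the set $\tau^{-1}(\{1,\dots,j-1\})$ and a common positive constant on $\tau^{-1}(\{j,\dots,d\})$; integrating along the $\pi$-order gives precisely $\e_\cS$ for the ordered partition $\cS$ with $\Type(\cS)=\tau^{-1}(\{j,\dots,d\})$ and blocks $S_r=\pi^{-1}([t_{r-1}+1,t_r])$. Since every nonempty proper subset of $[d]$ is the type of some ordered partition (once $\pi$ is chosen), and since the ray depends only on $\cS$, this establishes the bijection between rays of $\Br_d^2$ and $\overline{\sO_{d+1}}$, hence minimality.
\item Your step~(ii) computation is correct and matches the bookkeeping the authors carry out in the proof of Theorem~\ref{thm:ineqs_permasso} (first term of \eqref{eq:twosums} there). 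The condition you impose on $\tau$, namely $\tau(\{t_1,\dots,t_{k-1}\})=\{d+2-k,\dots,d\}$, is exactly what guarantees $\e_\cS\in\sigma(\pi,\tau)$, so the second part of Lemma~\ref{lem:det-ineq} applies.
\end{itemize}
In short: your argument is sound and is presumably close to what appears in \cite{def-cone}; the only missing piece is making step~(i) fully rigorous, which the computation above does.
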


\subsection{Cones associated to partition labeled trees} \label{subsec:newlabeling}

Let $(\cS, \TT) \in \sP_{d}$. Suppose $\cS = (S_1, S_2, \dots, S_k)$ and $\Type(\cS) = (t_1, \dots, t_{k-1}).$ Hence, $\TT$ has $k$ leaves. 
Recall in Section \ref{subsec:labeling}, we describe a way to label internal vertices of $\TT$ with the set $[k-1].$ We apply the same procedure on $\TT$ first, and then replace each label $i$ with $t_i.$ Hence, we obtain a labeling on internal vertices of $\TT$ with the set $\{t_1, t_2, \dots, t_{k-1}\}$.
Let $\GG(\cS,\TT)$ be the induced subtree of $\TT$ on its internal vertices together with the new labeling. One sees that $\GG(\cS, \TT)$ is the Hasse diagram of a preorder on $\{t_1,t_2,\dots,t_{k-1}\},$ which we denote by $\preceq_{(\cS, \TT)}$. For convenience, we also treat $\Type(\cS)$ as a set, and thus we can say that $\preceq_{(\cS,\TT)}$ is a preorder on $\Type(\cS).$

\begin{ex}\label{ex:partitionorder}
	Suppose $(\cS, \TT)$ is the $[9]$-partition labeled tree on the right of Figure \ref{fig:permutree1}. Then $\cS= (374,89,6,1,2,5)$ and $\Type(\cS) = (3,5,6,7,8).$ In Figure \ref{fig:partitionorder}, on the left we show $\TT$ together with its old internal vertex labeling considered in Section \ref{subsec:labeling}, and in the middle we show $\TT$ with its internal vertices labeled by $\Type(\cS)$, and finally the right side is $\GG(\cS,\TT)$, which defines the preorder $\preceq_{(\cS,\TT)}.$

	\begin{figure}[ht]
		\begin{tikzpicture}[scale=0.6]
			\begin{scope}
				%			\draw (0.5,0.5)--(3,3)  (2,2)--(3,1) (3,3)--(4,2);
				\draw (0.5,0.5)--(3,3)--(4.5,1.5) (4,2)--(3.5,1.5) (3,3)--(3,2) (2,1)--(2,2) (2,2)--(2.5,1.5);
				
				\draw[fill=white] (0.5,0.5) circle (.1);
				%\draw[fill=black] (1,1) circle (.1);
				
				\draw[fill=black] (2,2) circle (.1);
				%		\draw[fill=white] (1.5,0.5) circle (.1);
				%		\draw[fill=white] (2.5,0.5) circle (.1);
				\draw[fill=white] (2,1) circle (.1);
				\draw[fill=white] (2.5,1.5) circle (.1);
				\draw[fill=black] (3,3) circle (.1);
				\draw[fill=black] (4,2) circle (.1);
				\draw[fill=white] (3.5,1.5) circle (.1);
				\draw[fill=white] (4.5,1.5) circle (.1);
				\draw[fill=white] (3,2) circle (.1);
				%\draw[fill=black] (5,1) circle (.1);
				\node[above] at (2,2) {\footnotesize 1,2};
				\node[above] at (3,3) {\footnotesize 3,4};
				\node[above] at (4,2) {\footnotesize 5};	
			\end{scope}
			
			\begin{scope}[xshift=6cm]
				%			\draw (0.5,0.5)--(3,3)  (2,2)--(3,1) (3,3)--(4,2);
				\draw (0.5,0.5)--(3,3)--(4.5,1.5) (4,2)--(3.5,1.5) (3,3)--(3,2) (2,1)--(2,2) (2,2)--(2.5,1.5);
				
				\draw[fill=white] (0.5,0.5) circle (.1);
				%\draw[fill=black] (1,1) circle (.1);
				
				\draw[fill=black] (2,2) circle (.1);
				%		\draw[fill=white] (1.5,0.5) circle (.1);
				%		\draw[fill=white] (2.5,0.5) circle (.1);
				\draw[fill=white] (2,1) circle (.1);
				\draw[fill=white] (2.5,1.5) circle (.1);
				\draw[fill=black] (3,3) circle (.1);
				\draw[fill=black] (4,2) circle (.1);
				\draw[fill=white] (3.5,1.5) circle (.1);
				\draw[fill=white] (4.5,1.5) circle (.1);
				\draw[fill=white] (3,2) circle (.1);
				%\draw[fill=black] (5,1) circle (.1);
				\node[above] at (2,2) {\footnotesize 3,5};
				\node[above] at (3,3) {\footnotesize 6,7};
				\node[above] at (4,2) {\footnotesize 8};	
			\end{scope}
			
			\begin{scope}[xshift=12cm, yshift=-0.8cm]
				\draw (2,2)--(3,3)--(4,2);
				%	(3,3)--(4.5,1.5)  (4,2)--(3.5,1.5) (3,3)--(3,2) (2,1)--(2,2) (2.5,0.5)--(2,1) (2,2)--(2.5,1.5);
				
				\draw[fill=black] (2,2) circle (.1);
				\draw[fill=black] (3,3) circle (.1);
				\draw[fill=black] (4,2) circle (.1);
				
				\node[above] at (2,2) {\footnotesize 3,5};
				\node[above] at (3,3) {\footnotesize 6,7};
				\node[above] at (4,2) {\footnotesize 8};
				
			\end{scope}
			
		\end{tikzpicture}
		\caption{An example of the construction of $\GG(\cS,\TT)$.} 
		\label{fig:partitionorder}
	\end{figure}
	
\end{ex}

\begin{rem}\label{rem:suits}
	Given the way we construct the labeling of $\TT$ using the set $\Type(\cS),$ one sees that a lot of properties we discussed in Section \ref{subsec:labeling} on internal vertex labelings of $\TT$, e.g., Lemma \ref{lem:label}, can be modified to a version that works for the current version of labeling. 
\end{rem}

For each $(\cS, \TT) \in \sP_{d}$, we will define a preorder cone associated to it using the preorder $\preceq_{(\cS,\TT)}.$ We need a preliminary lemma before giving such a definition.

Recall that for any $\cS \in \sO_{d+1}$, we associate a preorder $\preceq_\cS$ on $[d+1]$ to it (see \eqref{equ:ord_preorder}).
Let 
\begin{equation}\label{eq:Ws}
	W_\cS := \{ \w \in W_d \ : \ w_i = w_j, \text{if $i \equiv_\cS j$}\}.
\end{equation}
Recall $\cS(\pi)$ is defined in \eqref{equ:cSpi}. 

\begin{lem}\label{lem:indeppi}
	Suppose $\cS \in \sO_{d+1}$ and $\pi,\pi' \in \fS_{d+1}.$ If $\cS(\pi)$ and $\cS(\pi')$ both refine $\cS,$ then for any $\w \in W_\cS$, we have
	\[ \DD_i^\pi \w = \DD_i^{\pi'} \w, \text{ for all $i$}.\]
\end{lem}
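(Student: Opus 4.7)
The plan is to unpack the definitions carefully and then do a short two-case analysis on the index $i$. Write $\cS = (S_1, \dots, S_k)$ with $\Type(\cS) = (t_1, \dots, t_{k-1})$, and adopt the convention $t_0 = 0$ and $t_k = d+1$. The hypothesis that $\cS(\pi)$ refines $\cS$ translates (via \eqref{equ:cSpi}) into the statement that for every $j \in [k]$ we have
\[
\{\pi^{-1}(t_{j-1}+1), \pi^{-1}(t_{j-1}+2), \dots, \pi^{-1}(t_j)\} = S_j,
\]
and likewise for $\pi'$. Meanwhile the hypothesis $\w \in W_\cS$ says that $w$ is constant on each block of $\cS$; let $c_j$ denote the common value of $w_r$ for $r \in S_j$.

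Next, fix an index $i \in [d]$ and split into two cases according to whether $i$ crosses a block boundary of $\cS$ or lies strictly inside a block. In the first case, $i = t_j$ for some $j \in [k-1]$, so $\pi^{-1}(i) \in S_j$ and $\pi^{-1}(i+1) \in S_{j+1}$, giving
\[
\DD_i^\pi \w = w_{\pi^{-1}(i+1)} - w_{\pi^{-1}(i)} = c_{j+1} - c_j,
\]
and the same computation for $\pi'$ yields the identical value $c_{j+1} - c_j$. In the second case, there exists some $j$ with $t_{j-1} < i < i+1 \le t_j$, so both $\pi^{-1}(i)$ and $\pi^{-1}(i+1)$ lie in $S_j$, forcing $\DD_i^\pi \w = c_j - c_j = 0$; again the same holds for $\pi'$. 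In either case $\DD_i^\pi \w = \DD_i^{\pi'}\w$, which is what we wanted to prove.

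There is no real obstacle in this argument; the only subtlety is the bookkeeping with $\Type(\cS)$ and the convention $t_0 = 0$, $t_k = d+1$, which is standard for this paper. The lemma essentially says that the first-difference map $\DD^\pi$, when restricted to the subspace $W_\cS$ where $\w$ is constant on blocks, depends only on $\cS$ and not on the particular refining permutation $\pi$—a useful sanity check that will let later definitions of cones attached to partition-labeled trees be well-defined independently of the auxiliary choice of $\pi$.
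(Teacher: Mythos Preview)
Your proof is correct and matches the reasoning the paper intends: the authors explicitly omit a proof as ``straightforward by checking the definition'' and instead give an illustrative example, and your two-case analysis on whether $i$ lies in $\Type(\cS)$ or strictly inside a block is exactly the argument that example is meant to suggest.
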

We won't give a proof for the above lemma, which is straightforward by checking the definition. Instead, we will give an example to demonstrate why it is true.

\begin{ex}
	Let $\cS= (374,89,6,1,2,5)$. Then $\w \in W_{\cS}$ if and only if
	\begin{equation}\label{equ:worderex}		
		w_3= w_7= w_4 \quad \text{ and } \quad w_8= w_9.
	\end{equation}
	Let $\pi=791386245$ and $\pi'=793286145.$ Then $\cS(\pi) = (3,7,4,8,9,6,1,5,2)$ and $\cS(\pi') = (7,4,3,8,9,6,1,5,2),$ both of which refine $\cS.$ 
	Clearly, for each $4 \le i \le 8$, we have $\DD_i^\pi \w = \DD_i^{\pi'} \w$ for any $\w \in W_d.$ Now if $\w \in W_{\cS},$ we have $\DD_i^\pi \w =0=  \DD_i^{\pi'} \w$ for $i=1,2$, and 
	\[ \DD_3^\pi \w = w_8-w_4 =w_8-w_3 =  \DD_3^{\pi'} \w.\]
\end{ex}

Lemma \ref{lem:indeppi} allows us to give the following definition.

\begin{defn}\label{def:cone_general}
	Let $(\cS,\TT) \in \sP_d$. Choose $\pi \in \fS_{d+1}$ such that $\cS(\pi)$ refines $\cS.$ Then we define 
	\begin{equation}
		\sigma(\cS,\TT) 
		:= \left\{ \w \in W_\cS \ :  
		\begin{array}{c}
			
			\DD^\pi_i \w \le \DD^\pi_j \w, \text{ if $i \preceq_{(\cS,\TT)} j$} \\ 
			\DD^\pi_\ell \w \ge 0, \text{ if $\ell$ is a minimal element of $\preceq_{(\cS,\TT)}$} 
		\end{array}
		\right\}.
	\end{equation}
\end{defn}
Note that by Lemma \ref{lem:indeppi}, the definition of $\sigma(\cS,\TT)$ does not depend on the choice of $\pi$ as long as $\cS(\pi)$ refines $\cS.$

\begin{ex}\label{ex:running}
	Let $(\cS, \TT)$ be the $[9]$-partition labeled tree on the right of Figure \ref{fig:permutree1}. Then $\cS= (374,89,6,1,2,5)$ and its preorder $\preceq_{(\cS,\TT)}$ has been discussed in Example \ref{ex:partitionorder}. 
	We choose $\pi=791386245$ which we have shown that $\cS(\pi)$ refines $\cS.$ Thus, we have that $\w \in \sigma(\cS, \TT)$ if and only if 
	both condition in \eqref{equ:worderex} and the condition below hold: 
	\begin{equation}\label{equ:picond} 
		0 \le \DD^\pi_3\w = \DD^\pi_5\w \le  \DD^\pi_6\w = \DD^\pi_7\w \text{ and } 0 \le \DD^\pi_8\w \le \DD^\pi_6\w.  
	\end{equation}
\end{ex}

\subsection{Face structure of \texorpdfstring{$\sigma(\cS,\TT)$}{our cones}} \label{subsec:facestructure}

In Section \ref{sec:permasso}, we will show (in Proposition \ref{prop:nestedpermasso}) that the collection of cones 
\begin{equation}\label{eq:fan_candidate}
	\Xi_d:=\{ \sigma(\cS,\TT) : (\cS,\TT) \in \sP_d\}
\end{equation}
is the normal fan of the permuto-associahedron that we construct. Therefore, we call $\Xi_d$ the \emph{nested Loday fan}. In the remaining of this section, we will explore properties of $\sigma(\cS,\TT)$ and  $\Xi_d.$ 
The main goal of this subsection is to prove the following proposition, which establishes the connection between $\Xi_d$ and the Kapranov poset.

\begin{prop}\label{prop:kapface}
	%The sets $\sP_d$ and $\Xi_d$ are in bijection.
	The map $(\cS,\TT) \mapsto \sigma(\cS, \TT)$ gives a bijection from $\sP_d$ to $\Xi_d.$
	Furthermore, for any $(\cS_1,\TT_1), (\cS_2,\TT_2) \in \sP_d$, we have that $(\cS_1,\TT_1) \le_{KP} (\cS_2,\TT_2)$ in the Kapranov poset $\cK\Pi_d$ if and only if the cone $\sigma(\cS_2,\TT_2)$ is a face of $\sigma(\cS_1,\TT_1)$.
\end{prop}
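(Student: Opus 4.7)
The plan is to realize $\sigma(\cS,\TT)$ as the preorder cone of an augmented preorder, so that Lemma \ref{lem:poset_facts}\eqref{itm:faces} can be applied directly. Introduce coordinates $y_i := \DD^\pi_i w$ for $i \in \Type(\cS)$; restricted to $W_\cS$ these give a linear isomorphism $W_\cS \cong \R^{|\Type(\cS)|}$ under which $\sigma(\cS,\TT)$ is described by $y_i \le y_j$ for $i \preceq_{(\cS,\TT)} j$ together with $y_\ell \ge 0$ for $\ell$ minimal in $\preceq_{(\cS,\TT)}$. Adjoining a formal minimum $\hat{0}$ below all minimal elements of $\preceq_{(\cS,\TT)}$ yields an augmented preorder $\hat{\preceq}_{(\cS,\TT)}$ whose Hasse diagram is $\GG(\cS,\TT)$ together with an extra vertex $\hat{0}$ connected to each minimal vertex. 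With the convention $y_{\hat{0}} = 0$, the cone $\sigma(\cS,\TT)$ becomes precisely the preorder cone of $\hat{\preceq}_{(\cS,\TT)}$ sliced by the hyperplane $\{y_{\hat{0}} = 0\}$.

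By Lemma \ref{lem:poset_facts}\eqref{itm:faces}, faces of this preorder cone correspond to contractions of $\hat{\preceq}_{(\cS,\TT)}$, and each contraction decomposes into a sequence of elementary edge contractions. The edges of the Hasse diagram of $\hat{\preceq}_{(\cS,\TT)}$ split into two types: internal edges of $\GG(\cS,\TT)$ and the edges from $\hat{0}$ to minimal vertices. I would check that contracting an edge of the first type matches case \ref{condition1} of the Kapranov covering (contracting an internal edge of $\TT$), while contracting an edge of the second type matches case \ref{condition2} (collapsing a minimal internal vertex $p$ and merging the labels of its leaf children).

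For each elementary contraction I then need to verify that the resulting face coincides with $\sigma(\cS',\TT')$ for the covering $(\cS',\TT')$ it represents. Case \ref{condition1} is direct since $\cS$ is unchanged. For case \ref{condition2}, setting $y_\ell = 0$ on the minimal class corresponding to $p$ (with labels $\{t_r,\dots,t_{s-1}\}$) combined with $w \in W_{\cS_1}$ forces the block values on $S_r,\dots,S_s$ to coincide, placing $w$ in $W_{\cS_2}$; the remaining inequalities translate to those defining $\sigma(\cS_2,\TT_2)$ because $\GG(\cS_2,\TT_2)$ is obtained from $\GG(\cS_1,\TT_1)$ by deleting the vertex $p$, with labels on the other internal vertices unchanged under the Type-relabeling (an analog of Lemma \ref{lem:label}, as hinted in Remark \ref{rem:suits}).

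Combining these, the faces of $\sigma(\cS,\TT)$ are indexed bijectively by $(\cS',\TT') \ge_{KP} (\cS,\TT)$ via $(\cS',\TT') \mapsto \sigma(\cS',\TT')$, with the order reversed. Injectivity of $(\cS,\TT) \mapsto \sigma(\cS,\TT)$ then follows because $\cS$ can be recovered from the block-equalities $w_i = w_j$ holding on the relative interior of $\sigma(\cS,\TT)$ (in the interior all $y_i > 0$, so distinct blocks of $\cS$ carry strictly different $w$-values), and $\preceq_{(\cS,\TT)}$ can be recovered from the facet-defining inequalities. The main obstacle is the bookkeeping for case \ref{condition2}: carefully tracking how block merging in $\cS$ interacts with the Type-relabeling of internal vertices of $\TT$, and ensuring the resulting face matches $\sigma(\cS_2,\TT_2)$ precisely, rather than merely being contained in it.
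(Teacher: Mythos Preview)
Your approach is correct and essentially mirrors the paper's: the paper also passes through $\DD^\pi$ to coordinates on $\R^{\Type(\cS)}$, verifies that each of the two types of defining inequalities is facet-defining, and matches each facet to one of the two Kapranov covering relations (this is the content of Lemma~\ref{lem:dimension_fancycones}, after which Proposition~\ref{prop:kapface} follows by transitivity together with the injectivity Lemma~\ref{lem:face_bijection}). Your device of adjoining a formal minimum $\hat{0}$ so that $\sigma(\cS,\TT)$ becomes a genuine preorder cone is a clean conceptual repackaging---the paper introduces the equivalent gadget $\tilde\sigma_\preceq \subset \R^n_{\ge 0}$ only for the injectivity step, and handles the face structure by direct case analysis---but either way the bookkeeping you flag in case~\ref{condition2} (tracking how merging blocks in $\cS$ interacts with the $\Type$-relabeling) is unavoidable and is exactly what the paper carries out.
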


First we prove the following lemma with various technical but basic facts about the cones $\sigma(\cS, \TT)$.

\begin{lem}\label{lem:dimension_fancycones}
	Let $(\cS, \TT) \in \sP_d$ and choose $\pi \in \fS_{d+1}$ such that $\cS(\pi)$ refines $\cS.$ Recall $W_\cS$ is defined in \eqref{eq:Ws}. 
	We define the affine space 
	\begin{equation}\label{eq:lineality} W_{(\cS,\TT)} := \left\{ \w \in W_{\cS}  :   \DD_i^\pi \w = \DD_j^\pi \w, \text{ if } i \equiv_{(\cS,\TT)} j \right\}
		= \left\{ \w \in W_{d} \ :  
		\begin{array}{c}
			w_i = w_j, \text{ if } i \equiv_\cS j \\
			\DD_i^\pi \w = \DD_j^\pi \w, \text{ if } i \equiv_{(\cS,\TT)} j 
		\end{array}
		\right\}.
	\end{equation}
	Then the cone $\sigma(\cS,\TT)$ has the following inequality description:
	\begin{equation}\label{equ:markedint} 
		\sigma(\cS, \TT) 
		= \left\{ \w \in W_{(\cS, \TT)} \ :  
		\begin{array}{c}
			\DD^\pi_i \w \le \DD^\pi_j \w, \text{ if $i \cover_{(\cS,\TT)} j$} \\
			\DD^\pi_\ell \w \ge 0, \text{ if $\ell$ is a minimal element of $\preceq_{(\cS,\TT)}$} 
		\end{array}
		\right\}.
	\end{equation} 
	Moreover, the following statements hold:
	\begin{enumerate}
		\item\label{itm:dimension} The cone $\sigma(\cS,\TT)$ is full-dimensional in $W_{(\cS,\TT)}$, whose dimension is equal to the number of internal vertices of $\TT.$ 
		
		\item \label{itm:facetdefining}
		The inequality description \eqref{equ:markedint} for $\sigma(\cS,\TT)$ is facet-defining.
		
		\item \label{itm:markedface} 
		For every facet $F$ of $\sigma(\cS,\TT)$, there exists $(\cS',\TT')$ covering $(\cS,\TT)$ in $\cK\Pi_d$ such that $F = \sigma(\cS',\TT').$ Conversely, for every $(\cS',\TT')$ in $\cK\Pi_d$ that covers $(\cS,\TT)$, its associated cone $\sigma(\cS',\TT')$ is a facet of $\sigma(\cS,\TT).$  
	\end{enumerate}
\end{lem}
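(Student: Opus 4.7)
The approach is to simplify $\sigma(\cS,\TT)$ via the change of coordinates $u_i := \DD^\pi_i \w$, which reduces it to a preorder cone so that Lemma \ref{lem:poset_facts} applies. For $\w \in W_\cS$, the values $u_i$ automatically vanish for $i \notin \Type(\cS)$, while $(u_t)_{t \in \Type(\cS)}$ parametrizes $W_\cS$ freely. Introducing a formal element $\hat{0}$ with $u_{\hat{0}} := 0$ and extending $\preceq_{(\cS,\TT)}$ to a preorder $\preceq'$ on $\Type(\cS) \cup \{\hat{0}\}$ by declaring $\hat{0} \preceq' i$ for every $i \in \Type(\cS)$, the defining inequalities of $\sigma(\cS,\TT)$ become exactly those of the preorder cone $\sigma_{\preceq'}$. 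The Hasse diagram of $\preceq'$ on equivalence classes is obtained from $\GG(\cS,\TT)$ by adjoining a new bottom vertex $\hat{0}$ connected to each minimal internal vertex of $\TT$.

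The description \eqref{equ:markedint} then follows by verifying both containments. For the forward inclusion, antisymmetry applied to the pair of opposing inequalities for each $i \equiv_{(\cS,\TT)} j$ forces $\DD^\pi_i \w = \DD^\pi_j \w$, placing $\w$ in $W_{(\cS,\TT)}$; for the reverse, transitivity propagates the covering inequalities to all pairs in $\preceq_{(\cS,\TT)}$. For statement \eqref{itm:dimension}, equivalence classes of $\preceq_{(\cS,\TT)}$ are in bijection with internal vertices of $\TT$ by construction (\S\ref{subsec:newlabeling}), so $\dim W_{(\cS,\TT)}$ equals the number of internal vertices of $\TT$. Full-dimensionality of $\sigma(\cS,\TT) = \sigma_{\preceq'}$ inside $W_{(\cS,\TT)}$ follows by picking any linear extension of $\preceq'$ and setting $u$-values strictly increasing along it, producing a point at which every inequality is strict.

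Statement \eqref{itm:facetdefining} follows from Lemma \ref{lem:poset_facts}\eqref{itm:interior}, since the inequalities in \eqref{equ:markedint} are precisely the covering-relation inequalities of $\preceq'$ translated back to the $\DD^\pi$ coordinates. For statement \eqref{itm:markedface}, Lemma \ref{lem:poset_facts}\eqref{itm:faces} identifies each facet of $\sigma_{\preceq'}$ with the preorder cone of a single-edge contraction of the Hasse diagram of $\preceq'$. Contracting an internal-internal edge of $\GG(\cS,\TT)$ yields $\sigma(\cS,\TT')$, where $\TT'$ is $\TT$ with the corresponding edge contracted, matching a cover of the first type in Definition \ref{defn:kap}. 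Contracting the edge from $\hat{0}$ to a minimal element $\ell$ of $\preceq_{(\cS,\TT)}$ forces $\DD^\pi_\ell \w = 0$; propagating the equalities along the equivalence class of $\ell$ collapses the consecutive blocks of $\cS$ indexed by the leaf children of the minimal internal vertex $p$ of $\TT$ containing $\ell$, yielding $\sigma(\cS',\TT')$ for a cover of the second type. The main obstacle is the bookkeeping in this second case: one must verify that the label of $p$ in $\GG(\cS,\TT)$, viewed as a subset of $\Type(\cS)$, accounts exactly for the positions at which vanishing of $\DD^\pi \w$ collapses the intended consecutive blocks of $\cS$, which follows from Lemma \ref{lem:label} as adapted in Remark \ref{rem:suits}.
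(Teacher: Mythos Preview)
Your argument is correct and follows the same route as the paper: pass to the coordinates $u_i = \DD^\pi_i \w$ (the paper calls the resulting cone $K(\cS,\TT)$), then match the two kinds of defining inequalities with the two kinds of Kapranov covers. The one organizational difference is your introduction of the formal bottom element $\hat{0}$, which turns $K(\cS,\TT)$ into a genuine preorder cone $\sigma_{\preceq'}$ so that Lemma~\ref{lem:poset_facts} can be quoted directly; the paper instead verifies each of (1)--(3) by hand, constructing an explicit interior point for (1) and, for (2)--(3), producing the covering pair $(\cS',\TT')$ for each inequality type and checking that the corresponding equality hyperplane cuts out $K(\cS',\TT')$. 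Either way the type-ii bookkeeping you flag at the end is unavoidable, and your sketch there is essentially the paper's case-ii computation.
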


\begin{proof}
	First, we observe that given the definition of $W_{(\cS,\TT)},$ the inequaliy description \eqref{equ:markedint} clearly defines $\sigma(\cS,\TT)$. Hence, it is left to verify statements (1)--(3).
	
	In order to make notation easy, we let $V(\cS,\TT)$ and $K(\cS,\TT)$ be the images of $W_{(\cS,\TT)}$ and $\sigma(\cS,\TT)$ under the map $\DD^\pi: W_d\rightarrow \R^d$. 
	Then 
	\[ V(\cS,\TT) =\left\{\u \in \R^d: 	
	\begin{array}{c}
		u_\ell = 0, \text{ if } \ell \not\in \Type(\cS) \\
		u_i = u_j, \text{ if } i \equiv_{(\cS,\TT)} j 
	\end{array}\right\},\]
	and
	\begin{equation} \label{eq:aux0}
		K(\cS,\TT) = \left\{\u \in  V(\cS,\TT) : 	
		\begin{array}{c} u_i\leq u_j,  \text{ if $i \cover_{(\cS,\TT)} j$}\\ u_\ell \ge 0, \text{ if $\ell$ is a minimal element of $\preceq_{(\cS,\TT)}$}\end{array}\right\}.
	\end{equation} 
	One important observation is that the linear map $\DD^\pi$ is an isomorphism from $W_d$ to $\R^d$. Hence, it suffices to show the following three corresponding statements: 
	\begin{enumerate}[label=(C\arabic*)]
		\item The cone $K(\cS,\TT)$ is full-dimensional in $V(\cS,\TT)$, whose dimension is equal to the number of internal vertices of $\TT.$

		\item 
		The inequality description \eqref{eq:aux0} for $K(\cS,\TT)$ is facet-defining.
		
		\item 
		For every facet $K$ of $K(\cS,\TT)$, there exists $(\cS',\TT')$ covering $(\cS,\TT)$ in $\cK\Pi_d$ such that $K = K(\cS',\TT').$ Conversely, for every $(\cS',\TT')$ in $\cK\Pi_d$ that covers $(\cS,\TT)$, the cone $K(\cS',\TT')$ is a facet of $K(\cS,\TT).$ 
	\end{enumerate}
	
	We prove (C1) first.
	Recall that $\preceq_{(\cS,\TT)}$ is a preorder on $\Type(\cS).$
	One sees that $V(\cS,\TT) \cong \{ \u \in \R^{\Type(\cS)} \ : \ u_i = u_j, \text{ if } i \equiv_{(\cS,\TT)} j \}.$ Clearly, the dimension of the latter is the number of equivalence classes in $\Type(\cS)/\equiv_{(\cS,\TT)},$ which is exactly the number of internal vertices of $\TT.$ 
	
	We now show that $K(\cS,\TT)$ is full-dimensional in $V(\cS,\TT).$
	Given that $K(\cS,\TT)$ is described by \eqref{eq:aux0}, it is enough to show there exists $\u \in V(\cS,\TT)$ such that 
	\[ \text{(a) $u_i< u_j,$ if $i \cover_{(\cS,\TT)} j$ \quad and \quad (b) $u_k > 0$ if $k$ is a minimal element of $\preceq_{(\cS,\TT)}$.}\] 
	We can construct such a $\u$ easily: First set $u_k = 0$ for each 
	$k \in [d] \setminus \Type(\cS).$
	Next, noticing that the Haase diagram $\GG(\cS,\TT)$ for the preorder $\preceq_{(\cS,\TT)}$ without labels is just $\TT$ which is a rooted tree, we set $u_i = d -k$ for each
	$i \in \Type(\cS)$
	that is a label for an internal vertex that is $k$-distance away from the root of $\TT$. 
	It is easy to see a vector $\u$ constructed this way satisfies the desired conditions. Thus, $K(\cS,\TT)$ is full-dimensional in $V(\cS,\TT)$ (whose dimension is the number of internal vertices of $\TT$.) Hence, statement (C1) holds.

	Next we prove (C2). There are two kinds of inequalities in \eqref{eq:aux0}. We treat them separately.
	\begin{enumerate}[label=\roman*., leftmargin=*]
		\item Suppose $i \cover_{(\cS,\TT)} j$, and assume that $i$ and $j$ are labels of the internal vertices $x$ and $y$ of $\TT.$ Let $\TT'$ be the tree obtained from $\TT$ by contracting the internal edge $\{x,y\}$ and let $\cS':= \cS.$ Clearly, we have $\Type(\cS') = \Type(\cS)$, and thus $\preceq_{(\cS,\TT)}$ and $\preceq_{(\cS',\TT')}$ are two preorders on the same set. By Lemma \ref{lem:label}~\eqref{itm:label-compatible} and Remark \ref{rem:suits}, we conclude that $\preceq_{(\cS',\TT')}$ is obtained from $\preceq_{(\cS,\TT)}$ by merging the equivalence classes $\bar{i}$ and $\bar{j}$. Hence, one sees that 
		\[ K(\cS,\TT) \cap \{ \u : u_i = u_j\} = K(\cS',\TT').\]
		Since $\TT$ has one less internal vertex than $\TT$, it follows from property (C1) that $K(\cS',\TT')$ is a facet of $K(\cS,\TT).$ Therefore, the inequality $u_i \le u_j$ if facet-defining.
		
		\item Suppose $\ell$ is a minimal element of $\preceq_{(\cS,\TT)}$, and assume $\ell$ is a label of the internal vertex $p$ of $(\cS,\TT).$ Clearly, $p$ is a minimal internal vertex of $\TT.$ 
		
		Let $(\cS', \TT')$ be the partition labeled tree obtained from $(\cS,\TT)$ by contracting all edges between $p$ and its children in $\TT$ while labeling this new leaf with the union of the labels of the children of $p$ in $\TT$. More precisely, suppose $\cS = (S_1, S_2,\dots, S_k)$, and assume the children of $p$ are labeled by $S_{a}, S_{a+1}, \dots, S_{b},$ then 
		\[\cS' = (S_1, S_2, \dots, S_{a-1}, \bigcup_{i=a}^b S_i, S_{b+1}, S_{b+2}, \dots, S_{k}).\]
		Hence, if $\Type(\cS) = (t_1, t_2, \dots, t_{k-1})$ then $\Type(\cS') = (t_1, t_2, \dots, t_{a-1}, t_b, t_{b+1}, \dots, t_{k-1}).$ One checks that $\GG(\cS', \TT')$ is obtained from $\GG(\cS,\TT)$ by removing $p$ together with its adjacent edge and its labeling (which is $\{t_a, t_{a+1},\dots, t_{b-1}\})$. Finally, using all these, we can verify that 
		\[ K(\cS,\TT) \cap \{ \u : u_\ell = 0\} = K(\cS',\TT').\]
		Similarly, since $\TT'$ has one less internal vertex than $\TT$, we have that $K(\cS',\TT')$ is a facet of $K(\cS,\TT),$ and thus the inequality $u_\ell \ge 0$ if facet-defining.
	\end{enumerate}
	
	Finally, we prove (C3). Note that in the proof of (C2), the partition labeled tree $(\cS',\TT')$ we constructed in each case covers $(\cS,\TT)$ in $\cK\Pi_d,$ and every $(\cS',\TT')$ covers $(\cS,\TT)$ arises from one of the inequality discussion. Therefore, (C3) follows.
\end{proof}

\begin{ex}\label{ex:nonsimplicial}
	Let $(\cS,\TT)$ the $[9]$-partition labeled tree on the left of Figure \ref{fig:permutree1}.
	Since $\TT$ has $8$ internal vertices, by Lemma \ref{lem:dimension_fancycones}~\eqref{itm:dimension}, we have that $\dim\sigma(\cS,\TT)=8$.
	Recall there are two types, type i and type ii, of covering relations in $\cK\Pi_d$ defined in Definition \ref{defn:kap}. In the poset $\cK\Pi_8$, our partition labeled tree $(\cS,\TT)$ are covered by $10$ elements, $7$ of which are obtained from $(\cS,\TT)$ by contracting a single internal edge of $\TT$ and the remaining $3$ are obtained by contracting the pairs of leaves $\{3,7\},\{8,9\},\{5,2\}$ in $\TT$, respectively.
	By Lemma \ref{lem:dimension_fancycones}~\eqref{itm:markedface}, the cone $\sigma(\cS,\TT)$ has $10$ facets, hence it is not simplicial.
\end{ex}

\begin{lem}\label{lem:face_bijection}
	If $(\cS,\TT), (\cS',\TT') \in \sP_d$ are distinct, then $\sigma(\cS,\TT)\neq \sigma(\cS',\TT')$ as cones in $W_d$.
\end{lem}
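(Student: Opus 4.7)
The plan is to show injectivity of the map $(\cS,\TT)\mapsto\sigma(\cS,\TT)$ by reconstructing the pair $(\cS,\TT)$ directly from the cone. Suppose $K:=\sigma(\cS_1,\TT_1)=\sigma(\cS_2,\TT_2)$; for each $i\in\{1,2\}$ fix a permutation $\pi_i$ such that $\cS(\pi_i)$ refines $\cS_i$. I will read off $\cS$ first, and then $\TT$.

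First I would pick any point $\w$ in the relative interior of $K$. Since $\w\in\sigma(\cS_i,\TT_i)^\circ$, all the facet-defining inequalities identified in Lemma \ref{lem:dimension_fancycones}~\eqref{itm:facetdefining} are strict at $\w$; in particular $\DD^{\pi_i}_\ell\w>0$ for every minimal element $\ell$ of $\preceq_{(\cS_i,\TT_i)}$, and transitivity through the preorder then gives $\DD^{\pi_i}_j\w>0$ for every $j\in\Type(\cS_i)$. Combined with $\w\in W_{\cS_i}$, this says that $\w$ is constant on each block of $\cS_i$ and that the common values strictly increase from one block to the next in the order prescribed by $\cS_i$. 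Hence $\cS_i$ is determined by $\w$ alone: its blocks are the level sets of $\w$, listed in strictly increasing order of the common value. Since $\w$ does not depend on $i$, this forces $\cS_1=\cS_2=:\cS$.

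Next, with $\cS$ in hand, I would use the linear isomorphism $\DD^\pi: W_d\to\R^d$ (for any $\pi$ with $\cS(\pi)$ refining $\cS$) to transport $K$ to the cone $K':=\DD^\pi(K)=K(\cS,\TT_i)$ appearing in the proof of Lemma \ref{lem:dimension_fancycones}. Its linear span is $V(\cS,\TT_i)$, so the equivalence relation $\equiv_{(\cS,\TT_i)}$ on $\Type(\cS)$ is recovered as the set of pairs $(a,b)$ such that $u_a=u_b$ holds on this span. The preorder $\preceq_{(\cS,\TT_i)}$ itself is then recovered as the set of pairs $(a,b)$ for which $u_a\le u_b$ holds on all of $K'$: one direction is immediate from the facet inequalities together with transitivity, while the other uses the fact that $K'$ is full-dimensional in $V(\cS,\TT_i)$ so that, for any $\preceq_{(\cS,\TT_i)}$-incomparable pair of labels, one can find a point of $K'$ realizing either sign of $u_a-u_b$. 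Thus $\preceq_{(\cS,\TT_1)}=\preceq_{(\cS,\TT_2)}$.

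Finally, the preorder together with its labeling is by definition the Hasse diagram $\GG(\cS,\TT_i)$, namely the internal subtree of $\TT_i$ labeled by $\Type(\cS)$. From this labeled subtree one rebuilds the plane rooted tree $\TT_i$ uniquely by proceeding top-down: by Lemma \ref{lem:label}, the label at each internal vertex $v$ is an interval whose elements split $v$'s leaf range into consecutive sub-ranges, each sub-range of size one being a leaf child while each longer sub-range corresponds to an internal child that is already pinned down by $\GG(\cS,\TT_i)$. Hence $\TT_1=\TT_2$. The step I expect to require the most care is the reverse direction of the preorder-recovery argument in the third paragraph, which amounts to a preorder-cone fact in the spirit of Lemma \ref{lem:poset_facts}; the $\cS$-recovery step is the cleanest, and the final tree reconstruction is a straightforward combinatorial induction on depth.
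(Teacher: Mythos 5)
Your proof is correct and follows essentially the same two-stage strategy as the paper: recover $\cS$ from a relative-interior point, then transport via $\DD^\pi$ and recover the preorder (hence the tree). The one place you deviate is that the paper isolates your third paragraph as a separate injectivity fact, Lemma~\ref{lem:bijection}, asserting that $\preceq \mapsto \tilde{\sigma}_\preceq$ is one-to-one on preorders, and then identifies $K(\cS,\TT)$ with $\tilde{\sigma}_{\preceq_{(\cS,\TT)}}$; you instead argue the preorder-recovery directly, which requires the Farkas-type observation that a homogeneous inequality $u_a\le u_b$ valid on the full-dimensional cone $K'$ must be a nonnegative combination of its facet inequalities, forcing a chain from $a$ to $b$ in the Hasse tree — ``full-dimensionality'' alone is not quite enough, and you should lean explicitly on that duality (or simply cite Lemma~\ref{lem:bijection} as the paper does). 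The final tree-reconstruction from the labeled Hasse diagram is stated more tersely in the paper but is the same combinatorial observation.
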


In order to prove the the above lemma, we need a %another 
modified version of preorder cones that are also in bijection with preorders.

For every precorder $\preceq$ on $[n]$,
we define \begin{equation}\label{eq:tildecone}
	\tilde{\sigma}_\preceq := \{\x\in \R_{\ge 0}^n \ : \ x_i\leq x_j \textrm{ if }i\preceq j\}.
\end{equation}

The following result is straightforward to check. 

\begin{lem}\label{lem:bijection}

	The map $\preceq \mapsto \tilde{\sigma}_\preceq$ is an injection from the set of all preorders on $[n]$ to the set of cones in $\R^n_{\geq 0}$.
\end{lem}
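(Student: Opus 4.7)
The plan is to establish injectivity by showing that the preorder $\preceq$ can be recovered directly from the cone $\tilde{\sigma}_\preceq$. Concretely, I will prove the identity
\[ i \preceq j \iff x_i \le x_j \text{ for all } \x \in \tilde{\sigma}_\preceq. \]
Once this equivalence is established, two distinct preorders must yield distinct cones, since the right-hand side is an intrinsic property of the cone.

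The forward implication is immediate from the definition of $\tilde{\sigma}_\preceq$ in \eqref{eq:tildecone}. For the reverse implication, I would prove the contrapositive: assuming $i \not\preceq j$, I will construct an explicit point $\x \in \tilde{\sigma}_\preceq$ with $x_i > x_j$. The natural candidate is the indicator vector of the principal up-set $U_i := \{k \in [n] : i \preceq k\}$; that is, set $x_k = 1$ if $k \in U_i$ and $x_k = 0$ otherwise. Clearly $\x \in \R^n_{\ge 0}$, $x_i = 1$ since reflexivity gives $i \in U_i$, and $x_j = 0$ since $i \not\preceq j$ means $j \notin U_i$.

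The main (and essentially only) technical point is to verify that $\x$ actually lies in $\tilde{\sigma}_\preceq$, i.e., that $x_k \le x_l$ whenever $k \preceq l$. This reduces to showing that $U_i$ is closed upward under $\preceq$: if $k \in U_i$ and $k \preceq l$, then transitivity of the preorder yields $i \preceq k \preceq l$, hence $l \in U_i$. Therefore the only way the inequality $x_k \le x_l$ could fail would be $x_k = 1$ and $x_l = 0$, which is ruled out by this upward closure. This completes the construction of the separating point and hence the proof of injectivity.

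I do not expect any genuine obstacle; the argument is a standard separation-type argument for preorders via indicator functions of up-sets, and it relies only on reflexivity and transitivity of $\preceq$, which are exactly the defining properties of a preorder. The only mild care needed is to note that we are working in $\R^n_{\ge 0}$ rather than in $W_{n-1}$, so the nonnegativity constraints in \eqref{eq:tildecone} are available — which is essential, because the indicator vector above uses the values $0$ and $1$ and thus exploits the boundary $x_j = 0$ of the nonnegative orthant.
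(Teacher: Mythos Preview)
Your argument is correct: recovering $\preceq$ from $\tilde{\sigma}_\preceq$ via the equivalence $i \preceq j \iff (x_i \le x_j \text{ for all } \x \in \tilde{\sigma}_\preceq)$, and witnessing the contrapositive with the indicator of the principal up-set $U_i$, is a clean and standard separation argument. The paper does not actually supply a proof of this lemma; it simply declares the result ``straightforward to check,'' so there is no alternative approach to compare against --- your write-up fills in exactly the kind of verification the authors had in mind.
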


\begin{proof}[Proof of Lemma \ref{lem:face_bijection}]
	Suppose $\sigma(\cS,\TT)=\sigma(\cS',\TT')$. We need to show $(\cS,\TT)=(\cS',\TT').$
	
	We first prove that $\cS = \cS'.$
	Suppose $\cS=(S_1, \dots, S_k)$. It follows from Lemma \ref{lem:dimension_fancycones} that for any $\w$ in the interior of $\sigma(\cS,\TT),$ we have $w_i = w_j$ if $i, j \in S_a$ for some $a$, and $w_i < w_j$ if $i \in S_a$ and $j \in S_b$ for some $a < b.$ Since $\sigma(\cS, \TT) = \sigma(\cS', \TT')$, they have exactly the same interior. Thus, we must have that $\cS = \cS'.$

	Next, we show $\TT=\TT'$. We choose $\pi$ such that $\cS(\pi)$ refines $\cS=\cS'$, and let $K(\cS, \TT) = \DD^\pi(\sigma(\cS,\TT))$ and $K(\cS', \TT') = \DD^\pi(\sigma(\cS',\TT'))$ as defined in the proof of Lemma \ref{lem:dimension_fancycones}. Since $\sigma(\cS,\TT)=\sigma(\cS',\TT')$, we must have that $K(\cS, \TT) = K(\cS',\TT')$.
	Recall that $\preceq_1 := \preceq_{(\cS,\TT)}$ and $\preceq_2 := \preceq_{(\cS',\TT')}$ are preorders on $\Type(\cS)=\Type(\cS').$ We consider the modified version of preorder cones
	\[ \tilde{\sigma}_{\preceq_\ell} =  \{ \u \in \R_{\ge 0}^{\Type(\cS)} \ : \ u_i \le u_j, \text{ if } i \preceq_\ell j \}, \quad \ell=1,2,\]
	defined in \eqref{eq:tildecone}. %Definition \ref{def:bijection}.
	By description of $K(\cS, \TT)$ given in the proof of Lemma \ref{lem:dimension_fancycones}, one sees that $K(\cS, \TT) \cong  \tilde{\sigma}_{\preceq_1}$ and $K(\cS', \TT') \cong  \tilde{\sigma}_{\preceq_2}$. Thus, $\tilde{\sigma}_{\preceq_1} =  \tilde{\sigma}_{\preceq_2}$. Then it follows from Lemma \ref{lem:bijection} that $\preceq_1 = \preceq_{(\cS,\TT)}$ and $\preceq_2 = \preceq_{(\cS',\TT')}$ are the same preorder on $\Type(\cS).$ Since we can recover the tree from the Hasse diagram of the preorder, we must have that $\TT = \TT'$, completing the proof. 
\end{proof}

\begin{proof}[Proof of Proposition \ref{prop:kapface}]
	By the definition of $\Xi_d$, the map $(\cS,\TT) \mapsto \sigma(\cS, \TT)$ clearly is a surjection from $\sP_d$ to $\Xi_d.$ Moreover, by Lemma \ref{lem:face_bijection}, the map is injective. Therefore, the first conclusion of the proposition follows. 
	
	The second conclusion follows from Lemma \ref{lem:dimension_fancycones} \eqref{itm:markedface} and the transitivity of the poset. 
\end{proof}

\subsection{Nested Loday fan}\label{subsec:nestedloday} 
Recall the nested Loday fan $\Xi_d$ is defined by Equation \eqref{eq:fan_candidate}. 
(As we mentioned before, the proof for that $\Xi_d$ is a fan will be completed in the next section. However, for convenience we still refer to $\Xi_d$ as the nested Loday fan.) 
In this subsection, we explore further properties of $\Xi_d$,  
and summarize useful results that will be used in Section \ref{sec:permasso}.

Let $\KK_d \subseteq \Xi_d$ be the set of maximal cones (in terms of dimension). It then follows from Proposition \ref{prop:kapface} that the set $\Xi_d$ is induced by $\KK_d$. By Lemma \ref{lem:dimension_fancycones}~\eqref{itm:dimension} and Lemma \ref{lem:kp_poset}~\eqref{itm:rank1}, we see that 
\[ \KK_d=\{\sigma(\cS, \TT):(\cS, \TT) \text{ is a rank-$1$ element of $\cK\Pi_d$} \} = \{\sigma(\cS(\pi), \TT):(\pi, \TT) \in \fS_{d+1} \times \sT_{d,d}\}, \]
and each cone has dimension $d$ (and thus is full-dimensional in $W_d$).
For ease of notation we define for any pair $(\pi, \TT) \in \fS_{d+1} \times \sT_{d,d}$,
\begin{equation}
	\sigma(\pi,\TT):=\sigma(\cS(\pi), \TT).
\end{equation} 
Then for these maximal dimensional cones in $\KK_d,$ their descriptions in Definition \ref{def:cone_general} and expressions for their interiors can be simplified.

Recall $\check{\sigma}(\tau)$ is defined in Defintion \ref{def:check}. Similar to how braid cones $\sigma(\pi)$ are generalized to precorder cones $\sigma_\preceq,$ for any preorder $\preceq$ on $[n],$ we define

	\begin{equation}\label{eq:posetcone_extended}
		\check{\sigma}_\preceq:= \{\x\in \R^n \ : \ x_i\leq x_j \textrm{ if }i\preceq j\},
	\end{equation}
	and for any $\TT \in \sT_{n}$, we set $\check{\sigma}(\TT):=\check{\sigma}_{\preceq_\TT}$.

With these notations, we have	

\begin{equation}\label{eq:top_dimensional_cones}
	\sigma(\pi, \TT) 
	=\left\{ \w \in W_d \ :  
	\begin{array}{c}
		\w \in \sigma(\pi) \\
		\DD^\pi \w \in \check{\sigma}(\TT)
	\end{array}
	\right\}, \text{ and } \sigma^\circ(\pi, \TT) 
	=\left\{ \w \in W_d \ :  
	\begin{array}{c}
		\w \in \sigma^\circ(\pi) \\
		\DD^\pi \w \in \check{\sigma}^\circ(\TT)
	\end{array}
	\right\},
\end{equation}

where 
\begin{equation}
	\check{\sigma}^\circ(\TT) =  \{\x\in \R^{d} \ : \ x_i < x_j \textrm{ if }i\cover_\TT j\}. 
	\label{eq:sigmaTinterior}
\end{equation} 
\begin{lem}\label{lem:treefancy}
	Each cone $\sigma \in \KK_d$ is a union of nested braid cones. Furthermore, the collection of cones $\KK_{d}$ is a pointed conic dissection of $W_{d}$. 
\end{lem}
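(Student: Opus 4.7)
The plan is to unpack the description of the cones in $\KK_d$ given by \eqref{eq:top_dimensional_cones} and reduce both claims to previously established facts, in particular Lemma \ref{lem:nested_braid_dissect} (that the set of nested braid cones $\MB^2_d$ is a conic dissection of $W_d$) and Lemma \ref{lem:unique} (that every $\tau \in \fS_d$ is a linear extension of a unique tree in $\sT_{d,d}$). The whole argument is a clean chain of substitutions.

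\textbf{Step 1: union of nested braid cones.} Fix $(\pi,\TT) \in \fS_{d+1} \times \sT_{d,d}$. Because $\TT$ is a complete binary tree, every internal vertex carries a unique label, so $\preceq_\TT$ is a partial order on $[d]$. The analog of Lemma \ref{lem:poset_facts}~\eqref{itm:union} for the $\check{\sigma}$-cones in $\R^d$ (the same proof as in \cite{faces} goes through verbatim when $W_{n-1}$ is replaced by $\R^n$) gives
$$\check{\sigma}(\TT) \;=\; \bigcup_{\tau \in \LL[\TT]} \check{\sigma}(\tau).$$
Substituting this into \eqref{eq:top_dimensional_cones} and comparing with Definition \ref{defn:NBF} produces
$$\sigma(\pi,\TT) \;=\; \bigcup_{\tau \in \LL[\TT]} \sigma(\pi,\tau),$$
which is the first assertion.

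\textbf{Step 2: $\KK_d$ is a conic dissection of $W_d$.} Taking the union over all $(\pi,\TT) \in \fS_{d+1} \times \sT_{d,d}$, together with Lemma \ref{lem:unique}, yields
$$\bigcup_{(\pi,\TT)} \sigma(\pi,\TT) \;=\; \bigcup_{(\pi,\tau) \in \fS_{d+1} \times \fS_d} \sigma(\pi,\tau) \;=\; W_d,$$
where the last equality is Lemma \ref{lem:nested_braid_dissect}. For the disjoint relative-interior condition, suppose $\w$ lies in $\sigma^\circ(\pi_1,\TT_1) \cap \sigma^\circ(\pi_2,\TT_2)$. By the description of the interior in \eqref{eq:top_dimensional_cones}, we have $\w \in \sigma^\circ(\pi_1) \cap \sigma^\circ(\pi_2)$, so Lemma \ref{lem:braid_dissect} forces $\pi_1 = \pi_2 =: \pi$. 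Then $\DD^\pi \w$ lies in $\check{\sigma}^\circ(\TT_1) \cap \check{\sigma}^\circ(\TT_2)$, and the analog of Lemma \ref{lem:treefan} in $\R^d$ (obtained by combining the decomposition from Step 1 with Lemma \ref{lem:unique} and the fact that $\{\check{\sigma}(\tau):\tau \in \fS_d\}$ is a conic dissection of $\R^d$) shows that $\{\check{\sigma}(\TT):\TT \in \sT_{d,d}\}$ is a conic dissection of $\R^d$, forcing $\TT_1 = \TT_2$.

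\textbf{Step 3: pointedness and main obstacle.} Pointedness is immediate: each $\sigma(\pi,\TT)$ is contained in the braid cone $\sigma(\pi)$, which is simplicial and hence pointed. There is no real obstacle; the entire proof is a short composition of prior results. The only bookkeeping point is to transport the $W_{n-1}$-statements of Lemma \ref{lem:poset_facts}~\eqref{itm:union} and Lemma \ref{lem:treefan} to their $\R^n$-analogs for the $\check{\sigma}$-cones, which is routine since the defining inequalities $x_i \le x_j$ do not involve the quotient by $\1$.
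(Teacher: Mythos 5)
Your proof is correct and follows essentially the same route as the paper: decompose each $\sigma(\pi,\TT)$ as a union of nested braid cones via the $\check{\sigma}$-analog of Lemma \ref{lem:poset_facts}~\eqref{itm:union}, invoke Lemma \ref{lem:unique} and Lemma \ref{lem:nested_braid_dissect} for the conic-dissection claim, and use $\sigma(\pi,\TT)\subseteq\sigma(\pi)$ for pointedness. The only (welcome) difference is that you spell out the disjoint-relative-interior verification explicitly via the interior description in \eqref{eq:top_dimensional_cones}, a step the paper leaves implicit.
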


\begin{proof}
	Let $\sigma \in \KK_d.$ Then $\sigma =\sigma(\pi,\TT)$ for some $(\pi, \TT) \in \fS_{d+1} \times \sT_{d,d}$. 
	By an analogous proof of Lemma \ref{lem:poset_facts}~\eqref{itm:faces}, we have that $\check{\sigma}(\TT)=\bigcup_{\tau\in\LL[\TT]}\check{\sigma}(\tau)$,
	so combining Equation \eqref{eq:top_dimensional_cones} together with the definition of nested braid cone as in Equation \eqref{eq:nested_cone} we obtain
	\begin{equation}\label{eq:dissection_2}
		\sigma(\pi,\TT)=\bigcup_{\tau\in\LL[\TT]}\sigma(\pi,\tau).
	\end{equation}
	Furthermore, for each fixed $\pi \in \fS_{d+1}$, it follows from Lemma \ref{lem:unique} that each permutation $\tau \in \fS_d$ appears in the above expression for exactly one $\TT \in \sT_{d,d}$. By Lemma \ref{lem:nested_braid_dissect} the collection $\{\sigma(\pi,\tau): (\pi, \tau) \in \fS_{d+1}\times \fS_{d}\}$ is a conic dissection of $W_d,$ so we conclude that so is $\KK_d$. Finally, by \eqref{eq:top_dimensional_cones}, we see that for any $(\pi, \TT) \in \fS_{d+1} \times \sT_{d,d}$, the cone $\sigma(\pi, \TT) \subseteq \sigma(\pi).$ Since the latter is pointed, so is the former. 
\end{proof}

Lemma \ref{lem:nestedlodayfan} below summarizes properties of $\Xi_d$, recalling $\overline{\sO_{d+1}} = \bigcup_{k=2}^{d+1} \sO_{d+1,k}$ is the set of all non-trivial ordered partitions of $[d+1]$.

\begin{notn}\label{notn:ST}
	For any $\cS \in \overline{\sO_{d+1}},$ if $\cS$ has $k$ blocks, we define $\TT_\cS$ to be the unique tree with one internal vertex and $k$ leaves.
\end{notn}

\begin{lem}\label{lem:nestedlodayfan} The nested Loday fan $\Xi_d$ has the following properties:
	\begin{enumerate}
		\item\label{itm:nestedlodayfantop} 
		The set $\KK_d = \{ \sigma(\pi,\TT): (\pi,\TT) \in \fS_{d+1} \times \sT_{d,d}\}$ consists of all the $d$-dimensional cones in $\Xi_d.$
		\item\label{itm:nestedlodayray} 
		The set $\{ \sigma(\cS, \TT_\cS):  \cS\in\overline{\sO_{d+1}}\}$ consists of all the 1-dimensional cones of $\Xi_d$. 
		Moreover, for each $\cS\in\overline{\sO_{d+1}}$, the vector $\e_\cS\in W_d$ is a generator for the $1$-dimensional cone $\sigma(\cS, \TT_\cS)$.
		
		\item\label{itm:nestedfaceposet} The face poset of $\Xi_d$ is isomorphic to the poset dual to  $\cK\Pi_d\setminus\hat{0}$. 
		\item\label{itm:nonsimplicial} The nested Loday fan $\Xi_d$ is not simplicial for $d \ge 3$. 
		\item\label{itm:nestedlodaycoarsen} The nested Loday fan $\Xi_d$ is a coarsening of the nested braid fan $\Br^2_d.$
	\end{enumerate}
\end{lem}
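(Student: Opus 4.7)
The plan is to handle each of the five statements in turn, leaning on the rank data from Lemma \ref{lem:kp_poset}, the dimension and facet analysis of Lemma \ref{lem:dimension_fancycones}, and the order-reversing bijection in Proposition \ref{prop:kapface}. Items (1) and (2) both reduce to the dimension formula $\dim \sigma(\cS,\TT)=i(\TT)$ from Lemma \ref{lem:dimension_fancycones}\eqref{itm:dimension}. Since $\TT \in \sT_{\le d}$ has at most $d$ internal vertices, a $d$-dimensional cone forces $\TT\in\sT_{d,d}$, and hence $\cS$ must have $d+1$ singleton blocks, i.e.\ $\cS=\cS(\pi)$ for some $\pi\in\fS_{d+1}$, which gives \eqref{itm:nestedlodayfantop}. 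For \eqref{itm:nestedlodayray}, $1$-dimensional cones come from $i(\TT)=1$, so $\TT=\TT_\cS$ with $\cS$ nontrivial, matching Lemma \ref{lem:kp_poset}\eqref{itm:rankd}. To verify that $\e_\cS$ generates $\sigma(\cS,\TT_\cS)$, I would fix any $\pi$ with $\cS(\pi)$ refining $\cS$ and note that the preorder $\preceq_{(\cS,\TT_\cS)}$ has one equivalence class (the root), so Definition \ref{def:cone_general} collapses to the single requirement $\DD^\pi_{t_j}\e_\cS \ge 0$ for each $t_j \in \Type(\cS)$. A short calculation gives $w_{\pi^{-1}(i)}=a$ whenever $t_{a-1}<i\le t_a$, so $\DD^\pi_i \e_\cS=1$ if $i\in\Type(\cS)$ and $0$ otherwise; this shows $\e_\cS\in\sigma(\cS,\TT_\cS)$, and one-dimensionality upgrades this to a generator.

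For \eqref{itm:nestedfaceposet}, Proposition \ref{prop:kapface} already provides an order-reversing bijection between $\cK\Pi_d\setminus\hat{0}$ and $\Xi_d$, so the claim is immediate upon passing to the dual poset. For \eqref{itm:nonsimplicial}, I would read off the facets of $\sigma(\cS,\TT)$ from Lemma \ref{lem:dimension_fancycones}\eqref{itm:markedface}: covers of $(\cS,\TT)$ in $\cK\Pi_d$ come in two types, contractions of internal edges (of which there are $i(\TT)-1$, since the internal vertices form a subtree) and contractions at minimal internal vertices. Writing $m(\TT)$ for the number of minimal internal vertices, the cone $\sigma(\cS,\TT)$ has $i(\TT)-1+m(\TT)$ facets, so simpliciality demands $m(\TT)=1$. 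The smallest tree violating this has three internal vertices and four leaves, so for $d=3$ I would take $\cS=(\{1\},\{2\},\{3\},\{4\})$ and $\TT$ the balanced binary tree on four leaves, producing a $3$-dimensional cone with $4$ facets. The same construction, padded with extra internal structure, gives non-simplicial cones for every $d\ge 3$.

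For \eqref{itm:nestedlodaycoarsen}, Lemma \ref{lem:treefancy} together with the identity \eqref{eq:dissection_2} already exhibits every maximal cone $\sigma(\pi,\TT)\in\KK_d$ as the union $\bigcup_{\tau\in\LL[\TT]}\sigma(\pi,\tau)$ of maximal cones of $\Br^2_d$. Since $\KK_d$ is a conic dissection of $W_d$ and so is $\MB^2_d$, every nested braid cone lies in a unique such $\sigma(\pi,\TT)$; combined with the face-poset comparison of \eqref{itm:nestedfaceposet} (lower-dimensional cones of $\Xi_d$ arise as intersections of maximal cones), this yields the coarsening relation. The main obstacle is the computation in \eqref{itm:nestedlodayray}, because $\sigma(\cS,\TT)$ is defined only after choosing a refining permutation $\pi$; Lemma \ref{lem:indeppi} is what makes the evaluation $\DD^\pi \e_\cS$ meaningful and ultimately routine.
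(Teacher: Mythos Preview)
Your proposal is correct and follows essentially the same route as the paper: items \eqref{itm:nestedlodayfantop}, \eqref{itm:nestedfaceposet}, and \eqref{itm:nestedlodaycoarsen} are dispatched via Lemma~\ref{lem:dimension_fancycones}\eqref{itm:dimension}, Proposition~\ref{prop:kapface}, and Lemma~\ref{lem:treefancy} respectively, just as in the paper. The only cosmetic differences are that for \eqref{itm:nestedlodayray} you verify $\e_\cS\in\sigma(\cS,\TT_\cS)$ by computing $\DD^\pi\e_\cS$ directly, whereas the paper writes out an explicit defining system for $\sigma(\cS,\TT_\cS)$ and checks membership; and for \eqref{itm:nonsimplicial} you exhibit the balanced binary tree on four leaves as a concrete $d=3$ counterexample, while the paper gives the same facet count $(i(\TT)-1)+m(\TT)$ and argues abstractly that some $\TT\in\sT_{d,d}$ has $m(\TT)\ge 2$ once $d\ge 3$.
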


\begin{proof}
	Condition \eqref{itm:nestedlodayfantop} follows from the our discussion on maximal dimensional cones, condition \eqref{itm:nestedfaceposet} follows from Proposition \ref{prop:kapface}, 
	and condition \eqref{itm:nestedlodaycoarsen} follows from Lemma \ref{lem:treefancy}.

	The first assertion in \eqref{itm:nestedlodayray} follows from Lemma \ref{lem:dimension_fancycones}~\eqref{itm:dimension} and Lemma \ref{lem:kp_poset}~\eqref{itm:rankd}.
	Suppose $\cS=(S_1, S_2, \dots, S_k) \in \sO_{d+1,k}$. Then it is easy to check that
	\[ \sigma(\cS, \TT_\cS) = \left\{ \w \in W_d \ : \ 
	\begin{array}{c}
		w_i = w_j \text{ if $i,j \in S_a$ for some $a$} \\	  
		w_i \le w_j \text{ if $i \in S_a, j \in S_{a+1}$ for some $a$} \\	  
		w_j - w_i = w_\ell -w_k \text{ if $i \in S_a, j \in S_{a+1}, k \in S_b, \ell \in S_{b+1}$ for some $a,b$}
	\end{array}
	\right\}.
	\]
	One can verify that the vector $\e_\cS$ is a nonzero vector in the above cone. Therefore, the second assertion in \eqref{itm:nestedlodayray} follows.
	
	Example \ref{ex:nonsimplicial} shows a particular example which is not simplicial. In general it follows from Lemma \ref{lem:dimension_fancycones}~\eqref{itm:markedface} that the number of facets of a $d$-dimensional cone $\sigma(\pi,\TT)\in\KK_d$ is $a+b$ where $a$ is the number of internal edges of $\TT$ and $b$ is the number of vertices adjacent to exactly two leaves. One checks that any tree $\TT \in \sT_{d,d}$ has $a=2d-(d+1)=d-1$, and if $d \ge 3,$ there exists $\TT \in \sT_{d,d}$ such that $b\ge2$. Hence, there exist non simplicial cones for every $d\geq 3$. 
\end{proof}

\section{Realization of the permuto-associahedron} \label{sec:permasso}
Recall that a $d$-permuto-associahedron is a $d$-dimensional polytope whose face poset is isomorphic to $\cK\Pi_d$.
Similar to Section \ref{sec:asso}, we follow the method outlined in the introduction to construct a realization for the permuto-associahedron, which is the majority of the content of this section. 
In the last part of this section, we compare our realization with the one given by Reiner and Ziegler in \cite{reiner_ziegler}. 

\subsection{Vertices of the nested permuto-associahedron}
\label{subsec:vertices_nested}
In this part, we will give a set of points that are vertex candidates of our realization. We start by adapting notation from Definition \ref{def:vector}.

\begin{notn}\label{notn:vtbeta}
	For any $\bbeta = (\beta_1, \dots, \beta_d) \in \R^d$ and $\TT \in \sT_{d,d},$ we let
	\[ \beta_{\TT,i} := \val(\bbeta, \TT_{(i)}).\]
	Therefore, $\v_{\TT}^\bbeta = \sum_{i=1}^d \beta_{\TT,i} \ \e_i.$
\end{notn}

Given strictly increasing sequences $\balpha = (\alpha_1, \alpha_2, \dots, \alpha_{d+1}) \in \R^{d+1}$ and $\bbeta = (\beta_1, \beta_2, \dots, \beta_{d}) \in \R^{d}$,
for any $(\pi, \TT) \in \fS_{d+1} \times \sT_{d,d}$, we define
\begin{equation}\label{equ:defnpav}
	\v_{\pi,\TT}^{(\balpha,\bbeta)} := \sum_{i=1}^{d+1} \alpha_i \e_{\pi^{-1}(i)} + \sum_{i=1}^d \beta_{\TT,i} \ff_{i}^\pi. 
\end{equation}
It is easy to see that $\v_{\pi,\TT}^{(\balpha,\bbeta)}$ lies in $U^\balpha_d$ since the sum of its coordinates is $\sum_{i=1}^{d+1} \alpha_i$.

After rearranging coordinates in \eqref{equ:defnpav}, we get the following expression:
\begin{equation}
	\v_{\pi,\TT}^{(\balpha,\bbeta)} = \sum_{i=1}^{d+1} \left(\alpha_i +\left( \beta_{\TT,i-1} - \beta_{\TT,i} \right) \right) \ \e_{\pi^{-1}(i)},
	\label{equ:expansion} 
\end{equation}
where by convention we let $\beta_{\TT,0} = \beta_{\TT, d+1} =0.$ 

Parallel to Section \ref{sec:nested}, we say that $(\balpha, \bbeta)$ is a \emph{$\sT$-appropriate pair} (of strictly increasing sequences), if for any complete binary tree $\TT \in \sT_{d,d},$ the coefficients of $\e_{\pi^{-1}(i)}$ in the above expansion increase strictly as $i$ increases.

\begin{rem}\label{rem:appropriate}
	We remark that being an ``appropriate pair'' and being a ``$\sT$-appropriate pair'' are not equivalent. We can show for $d =3$ that any $\sT$-appropriate pair $(\balpha,\bbeta)$ is an appropriate pair. We suspect that this implication is true in general; however, we do not have a proof. Since this is not relevant to the discussion of this paper, we leave it to interested readers. In any case, it is not hard to see that by scaling $\balpha$ with a sufficiently large factor, we can make $(\balpha,\bbeta)$ both ``appropriate'' and ``$\sT$-appropriate''.
	
\end{rem}
\begin{defn}\label{defn:permasso}
	Suppose $(\balpha, \bbeta) \in \R^{d+1} \times \R^d$ is a $\sT$-appropriate pair of strictly increasing sequences. We define the \emph{nested permuto-associahedron}
	\begin{equation}
		\PA(\balpha, \bbeta) := \conv\left(v_{\pi,\TT}^{(\balpha,\bbeta)}\ :\ (\pi,\TT) \in \fS_{d+1}\times \sT_{d,d}\right).
		\label{equ:defnpermasso}
	\end{equation}
\end{defn}

The next result is the main theorem of this paper.

\begin{thm}\label{thm:permassoc}
	Let $(\balpha, \bbeta) \in \R^{d+1} \times \R^d$ be a $\sT$-appropriate pair of strictly increasing sequences.
	
	Then the face poset of the nested permuto-associahedron $\PA(\balpha,\bbeta)$ is the Kapranov poset $\cK\Pi_d$. 
	Moreover, $\PA(\balpha,\bbeta)$ is a $d$-dimensional permuto-associahedron, and is a generalized nested permutohedron as well.
\end{thm}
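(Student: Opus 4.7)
The plan is to apply Lemma \ref{lem:main_tool} with the candidate vertex set $\{v_{\pi,\TT}^{(\balpha,\bbeta)} : (\pi,\TT)\in\fS_{d+1}\times \sT_{d,d}\}\subset U^\balpha_d$ and the conic dissection $\KK_d = \{\sigma(\pi,\TT)\}$ of $W_d$, which by Lemma \ref{lem:treefancy} is already known to be pointed. Assuming the hypothesis \eqref{eq:max} of that lemma is verified, the rest of Theorem \ref{thm:permassoc} follows quickly: $\PA(\balpha,\bbeta)$ will be full-dimensional in $U^\balpha_d$ (hence $d$-dimensional), and its normal fan will be induced by $\KK_d$, which is exactly the nested Loday fan $\Xi_d$. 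Then Lemma \ref{lem:anti} combined with Lemma \ref{lem:nestedlodayfan}~\eqref{itm:nestedfaceposet} identifies the face poset of $\PA(\balpha,\bbeta)$ with $\cK\Pi_d$, proving it is a $d$-permuto-associahedron; and Lemma \ref{lem:nestedlodayfan}~\eqref{itm:nestedlodaycoarsen} says $\Xi_d$ coarsens $\Br^2_d$, so $\PA(\balpha,\bbeta)$ is a generalized nested permutohedron.

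The core work, therefore, is to establish the strict inequality in \eqref{eq:max}. I would split it into two auxiliary statements. First, for any fixed $\TT'\in \sT_{d,d}$, the rewriting \eqref{equ:expansion} shows that $\{v_{\pi',\TT'}^{(\balpha,\bbeta)} : \pi'\in\fS_{d+1}\}$ is the $\fS_{d+1}$-orbit of $\bgamma_{\TT'}=(\gamma_{\TT',1},\ldots,\gamma_{\TT',d+1})$, which is strictly increasing by the $\sT$-appropriate hypothesis. Hence these points form the usual permutohedron $\Perm(\bgamma_{\TT'})$ whose normal cone at $v_{\pi,\TT'}^{(\balpha,\bbeta)}$ is $\sigma(\pi)$; this yields
\[
\langle\w, v_{\pi,\TT'}^{(\balpha,\bbeta)}\rangle \;\ge\; \langle\w, v_{\pi',\TT'}^{(\balpha,\bbeta)}\rangle \qquad \forall\w\in\sigma^\circ(\pi),
\]
with equality iff $\pi'=\pi$.

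Second, for $\w\in\sigma^\circ(\pi,\TT)$ and $\TT'\neq \TT$, I would show
\[
\langle\w, v_{\pi,\TT}^{(\balpha,\bbeta)}\rangle \;>\; \langle\w, v_{\pi,\TT'}^{(\balpha,\bbeta)}\rangle.
\]
Using \eqref{equ:defnpav} and Notation \ref{notn:vtbeta}, the difference simplifies to
\[
\langle\w, v_{\pi,\TT}^{(\balpha,\bbeta)} - v_{\pi,\TT'}^{(\balpha,\bbeta)}\rangle
= \sum_{i=1}^{d}(\beta_{\TT,i}-\beta_{\TT',i})\,\DD_i^\pi\w
= \langle \DD^\pi\w,\; v_{\TT}^{\bbeta} - v_{\TT'}^{\bbeta}\rangle.
\]
By \eqref{eq:top_dimensional_cones} we have $\DD^\pi\w\in\check{\sigma}^\circ(\TT)\subset\R^{d}$, and by Lemma \ref{lem:sumval} applied to the full tree $\TT$ the vector $v_{\TT}^{\bbeta} - v_{\TT'}^{\bbeta}$ has zero coordinate sum, so the pairing is invariant under adding a multiple of $\1$ to $\DD^\pi\w$. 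Passing to the class in $W_{d-1}$, this class lies in $\sigma^\circ(\TT)$, and Corollary \ref{cor:aux} applied to the Loday associahedron $\loday(\bbeta)$ furnishes the strict inequality.

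Combining the two statements handles every $(\pi',\TT')\ne (\pi,\TT)$: if $\TT'=\TT$ then necessarily $\pi'\ne\pi$ and the first statement is strict; if $\TT'\ne\TT$ then the second statement gives a strict gap between $v_{\pi,\TT}$ and $v_{\pi,\TT'}$, to which we concatenate the (weak) inequality from the first. The main obstacle is the second statement: recognizing that after applying $\DD^\pi$, the permuto-associahedral comparison reduces to an associahedral comparison, and that Corollary \ref{cor:aux} is applicable despite $\DD^\pi\w$ living in $\R^d$ rather than $W_{d-1}$. The $\sT$-appropriate hypothesis is used essentially in the first statement (to guarantee strictly increasing $\bgamma_{\TT'}$), while the second statement relies only on the strict monotonicity of $\bbeta$.
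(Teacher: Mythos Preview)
Your proposal is correct and follows essentially the same route as the paper: the paper's Lemma \ref{lem:aux} proves the chain $\langle\w,v_{\pi,\TT}^{(\balpha,\bbeta)}\rangle \ge \langle\w,v_{\pi,\TT'}^{(\balpha,\bbeta)}\rangle \ge \langle\w,v_{\pi',\TT'}^{(\balpha,\bbeta)}\rangle$ using the Rearrangement Inequality for the second step (your ``first statement'') and Corollary \ref{cor:aux} for the first step (your ``second statement''), and then Proposition \ref{prop:nestedpermasso} and the proof of Theorem \ref{thm:permassoc} finish exactly as you outline. Your treatment of the passage from $\DD^\pi\w\in\R^d$ to $W_{d-1}$ via the zero-sum observation is in fact slightly more explicit than the paper's, which simply asserts $\DD\u\in\sigma^\circ(\TT)$.
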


\subsection{Normal fan of nested permuto-associahedra}
Recall that in \S\ref{subsec:nestedloday} we have defined $\KK_d$ to be the set of maximal cones in $\Xi_d$, and have shown that $\KK_d$ is a conic dissection of $W_d$ (see Lemma \ref{lem:treefancy}). 
The goal of this part is to use Lemma \ref{lem:main_tool} to show that $\KK_d$ induces the normal fan of $\PA(\balpha, \bbeta)$, as well as confirm the set of points $\left\{ v_{\pi,\TT}^{(\balpha,\bbeta)} \right\}$ defined above is indeed the vertex set of $\PA(\balpha,\bbeta).$

\begin{lem}\label{lem:aux}
	Suppose $(\balpha, \bbeta) \in \R^{d+1} \times \R^d$ is a $\sT$-appropriate pair of strictly increasing sequences. Let $(\pi,\TT), (\pi', \TT') \in \fS_{d+1} \times \sT_{d,d}$. Then for every $\w \in \sigma^\circ(\pi,\TT)$, we have 
	\begin{equation}
		\left\langle \w,\v^{(\balpha,\bbeta)}_{\pi,\TT} \right\rangle \ge \left\langle\w,\v^{(\balpha,\bbeta)}_{\pi',\TT'} \right\rangle
	\end{equation}
	where the equality holds if and only if $(\pi, \TT) = (\pi', \TT').$
\end{lem}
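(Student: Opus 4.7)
The plan is to decompose the inner product into two pieces and maximize one factor at a time, reducing to the rearrangement inequality (as in the usual permutohedron case) and to Corollary \ref{cor:aux} (the Loday associahedron case). Fix $\w \in \sigma^\circ(\pi, \TT)$. By \eqref{eq:top_dimensional_cones}, this means both $\w \in \sigma^\circ(\pi)$ and $\DD^\pi \w \in \check{\sigma}^\circ(\TT)$. We will show that for any $(\pi', \TT') \in \fS_{d+1} \times \sT_{d,d}$,
\[
\bigl\langle \w, \v^{(\balpha,\bbeta)}_{\pi',\TT'} \bigr\rangle \;\leq\; \bigl\langle \w, \v^{(\balpha,\bbeta)}_{\pi,\TT'} \bigr\rangle \;\leq\; \bigl\langle \w, \v^{(\balpha,\bbeta)}_{\pi,\TT} \bigr\rangle,
\]
with the first inequality strict unless $\pi' = \pi$ and the second strict unless $\TT' = \TT$.

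\emph{Step 1 (maximize in $\pi'$ with $\TT'$ fixed).} Using the rearranged form \eqref{equ:expansion}, we have
\[
\bigl\langle \w, \v^{(\balpha,\bbeta)}_{\pi',\TT'} \bigr\rangle = \sum_{i=1}^{d+1} \bigl(\alpha_i + \beta_{\TT',i-1} - \beta_{\TT',i}\bigr)\, w_{\pi'^{-1}(i)}.
\]
Since $(\balpha,\bbeta)$ is $\sT$-appropriate, the sequence of coefficients $(\alpha_i + \beta_{\TT',i-1} - \beta_{\TT',i})_{i=1}^{d+1}$ is strictly increasing. Because $\w \in \sigma^\circ(\pi)$, the entries $w_{\pi^{-1}(1)} < \dots < w_{\pi^{-1}(d+1)}$ are strictly increasing, so by the strict form of the rearrangement inequality the above sum is uniquely maximized over $\pi' \in \fS_{d+1}$ at $\pi' = \pi$.

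\emph{Step 2 (maximize in $\TT'$ with $\pi' = \pi$).} From the defining expression \eqref{equ:defnpav},
\[
\bigl\langle \w, \v^{(\balpha,\bbeta)}_{\pi,\TT'} \bigr\rangle = \bigl\langle \w, \v^{\balpha}_{\pi}\bigr\rangle + \sum_{i=1}^{d} \beta_{\TT',i}\, \DD^{\pi}_{i}\w,
\]
and the first summand does not depend on $\TT'$. Set $\x := \DD^\pi \w \in \R^d$. Then $\sum_i \beta_{\TT',i} x_i = \langle \x, \v^\bbeta_{\TT'}\rangle$ by Notation \ref{notn:vtbeta}. Let $\bar{\x}$ be the image of $\x$ in $W_{d-1} = \R^d/\1$. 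Since $\w \in \sigma^\circ(\pi,\TT)$ forces $\x \in \check{\sigma}^\circ(\TT)$, we have $\bar{\x} \in \sigma^\circ(\TT)$ by \eqref{eq:sigmaTinterior} and Lemma \ref{lem:poset_facts}\eqref{itm:interior}. Moreover, Lemma \ref{lem:sumval} gives $\langle \1, \v^\bbeta_{\TT'}\rangle = \sum_{k=1}^{d}\beta_k$ for every $\TT' \in \sT_{d,d}$, so the differences $\langle \x, \v^\bbeta_{\TT'} - \v^\bbeta_{\TT}\rangle$ descend to the quotient $W_{d-1}$. Now Corollary \ref{cor:aux} (with $n = d$) applied to $\bar{\x} \in \sigma^\circ(\TT)$ yields $\langle \bar{\x}, \v^\bbeta_{\TT'}\rangle \leq \langle \bar{\x}, \v^\bbeta_{\TT}\rangle$, with equality iff $\TT' = \TT$. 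Lifting back, $\langle \x, \v^\bbeta_{\TT'}\rangle$ is uniquely maximized at $\TT' = \TT$, which completes the proof.

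The main obstacle is the bookkeeping in Step 2: one must check that Corollary \ref{cor:aux} (stated for the quotient $W_{n-1}$) genuinely applies to $\x \in \R^d$ as it stands, which is handled by the translation-invariance of the $\val(\bbeta, \cdot)$ vectors guaranteed by Lemma \ref{lem:sumval}. Everything else is a clean two-step maximization.
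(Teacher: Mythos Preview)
Your proof is correct and follows essentially the same approach as the paper: the same two-step chain $\langle \w,\v_{\pi',\TT'}\rangle \le \langle \w,\v_{\pi,\TT'}\rangle \le \langle \w,\v_{\pi,\TT}\rangle$, with the rearrangement inequality handling the $\pi$ step and Corollary~\ref{cor:aux} handling the $\TT$ step. The only difference is cosmetic: the paper writes $\DD\u \in \sigma^\circ(\TT)$ directly (a mild abuse, since $\DD\u$ lives in $\R^d$ rather than $W_{d-1}$), whereas you spell out the passage to the quotient via Lemma~\ref{lem:sumval}; both are fine.
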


\begin{proof}
	We will prove the inequality by introducing an intermediate product and showing
	\begin{equation}
		\left\langle \w,\v^{(\balpha,\bbeta)}_{\pi,\TT} \right\rangle \ge
		\left\langle \w,\v^{(\balpha,\bbeta)}_{\pi,\TT'} \right\rangle \ge
		\left\langle\w,\v^{(\balpha,\bbeta)}_{\pi',\TT'} \right\rangle,
		\label{equ:intermprod}
	\end{equation}
	where the first equality holds if and only if $\TT = \TT'$ and the second equality holds if and only $\pi = \pi'.$
	
	We let $u_i = w_{\pi^{-1}(i)}$ for each $i$, which allows us to express $\w$ as in \eqref{equ:wexp0} and have $\DD^\pi \w = \DD \u.$
	Then because $\w \in \sigma^\circ(\pi, \TT)$, we have the following conditions from Equation \eqref{eq:top_dimensional_cones}: 
	\begin{enumerate}
		\item $\DD\u >0,$ which is equivalent to $u_{1} < u_{2} < \dots < u_{d+1}$, and
		\item $\DD_i \u < \DD_j \u$ if $i \cover_\TT j$. 
	\end{enumerate}
	
	Expression \eqref{equ:wexp0}, together with \eqref{equ:expansion}, allows us to compute products in \eqref{equ:intermprod} easily. 
	Since the pair $(\balpha, \bbeta)$ is $\sT$-appropriate, we have that $\left(\alpha_i +\left( \beta_{\TT',i-1} - \beta_{\TT',i} \right) \right)$ strictly increases as $i$ increases. This, together with condition (1) above and the Rearrangement Inequality \cite[Theorem 368]{hardy} gives us the second inequality in \eqref{equ:intermprod} and that the equality holds if and only if $\pi = \pi'.$

	Next we see that the first inequality in \eqref{equ:intermprod} is equivalent to
	\begin{equation*}
		\sum_{i=1}^{d+1}\left( \beta_{\TT,i-1} - \beta_{\TT,i} \right) u_i \ge
		\sum_{i=1}^{d+1} \left( \beta_{\TT',i-1} - \beta_{\TT',i} \right) u_i. 
	\end{equation*}
	After rearranging summations, the above inequality becomes
	\begin{equation}
		\left\langle \DD \u, \v_\TT^\bbeta \right\rangle = \sum_{i=1}^{d+1} (u_{i+1}-u_i) \beta_{\TT,i}  \ge
		\sum_{i=1}^{d+1} (u_{i+1}-u_i) \beta_{\TT',i} = \left\langle \DD \u, \v_{\TT'}^\bbeta \right\rangle. 
		\label{eq:intermprod1}
	\end{equation}
	Then because $\DD \u \in \sigma^\circ(\TT)$, it follows from Corollary \ref{cor:aux} that the inequality \eqref{eq:intermprod1} holds and its equality holds if and only if $\TT = \TT'.$
\end{proof}

The following proposition is the key result of this subsection, characterizing the vertex set and the normal fan of the nested permuto-associahedron. It also provides the main ingredients we need for proving Theorem \ref{thm:permassoc}. 

\begin{prop}\label{prop:nestedpermasso}
	Let $(\balpha, \bbeta) \in \R^{d+1} \times \R^d$ be a $\sT$-appropriate pair of strictly increasing sequences.	
	\begin{enumerate}
		\item \label{itm:nestedfulldim} The nested permuto-associahedron $\PA(\balpha,\bbeta)$ is full-dimensional in $U_d^\balpha$ and its vertex set is $\left\{v_{\pi,\TT}^{(\balpha,\bbeta)}\ :\ (\pi,\TT) \in \fS_{d+1}\times \sT_{d,d}\right\}$.
		\item \label{itm:nestedvertexcone} For each $(\pi,\TT) \in \fS_{d+1}\times \sT_{d,d}$, we have $\sigma(\pi, \TT)=\ncone\left(v_{\pi,\TT}^{(\balpha,\bbeta)},\PA(\balpha,\bbeta)\right).$
		
		\item\label{itm:nestednormal} The normal fan of $\PA(\balpha,\bbeta)$ is $\Xi_d=\{ \sigma(\cS,\TT) : (\cS,\TT) \in \sP_d\}.$
		Hence, $\Xi_d$ is a complete projective fan in $W_d.$
	\end{enumerate}
\end{prop}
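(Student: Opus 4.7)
The proof will mirror closely the strategy used for Proposition \ref{prop:loday}: I will verify the hypotheses of Lemma \ref{lem:main_tool} for the conic dissection $\KK_d$ of $W_d$ and the set of points $\{v_{\pi,\TT}^{(\balpha,\bbeta)}\}$, then extract (1), (2), and the core of (3) as immediate consequences.

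First I observe that Lemma \ref{lem:treefancy} already gives us that $\KK_d = \{\sigma(\pi,\TT) : (\pi,\TT) \in \fS_{d+1}\times\sT_{d,d}\}$ is a pointed conic dissection of $W_d$; note that $\KK_d$ is indexed by the same set that indexes our candidate vertices. The required matching condition \eqref{eq:max} of Lemma \ref{lem:main_tool} is exactly the content of Lemma \ref{lem:aux}: for every $\w \in \sigma^\circ(\pi,\TT)$ and every $(\pi',\TT') \ne (\pi,\TT)$ we have $\langle \w, v_{\pi,\TT}^{(\balpha,\bbeta)}\rangle > \langle \w, v_{\pi',\TT'}^{(\balpha,\bbeta)}\rangle$.

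Applying Lemma \ref{lem:main_tool} now immediately yields (1) and (2), along with the statement that $\KK_d$ induces the normal fan $\Sigma(\PA(\balpha,\bbeta))$. Since $\KK_d$ is pointed and $\PA(\balpha,\bbeta) \subseteq U_d^\balpha$ (each $v_{\pi,\TT}^{(\balpha,\bbeta)}$ lies in $U_d^\balpha$ by the remark after Equation \eqref{equ:defnpav}), Lemma \ref{lem:main_tool} also tells us that $\PA(\balpha,\bbeta)$ is full-dimensional in $U_d^\balpha$.

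It remains to identify the fan induced by $\KK_d$ as exactly $\Xi_d$. This is the routine but essential bookkeeping step, handled via results already established in Section \ref{sec:orderedtrees}. By Lemma \ref{lem:nestedlodayfan}~\eqref{itm:nestedlodayfantop}, $\KK_d$ consists precisely of the top-dimensional cones of $\Xi_d$. By Proposition \ref{prop:kapface}, every face of a cone $\sigma(\pi,\TT) \in \KK_d$ has the form $\sigma(\cS,\TT')$ for some $(\cS,\TT') \in \sP_d$ with $(\cS(\pi),\TT) \le_{KP} (\cS,\TT')$; conversely, every element $(\cS,\TT') \in \sP_d$ lies above some rank-$1$ element $(\cS(\pi),\TT)$ of $\cK\Pi_d$ (by Lemma \ref{lem:kp_poset}), so every $\sigma(\cS,\TT') \in \Xi_d$ is a face of some $\sigma(\pi,\TT) \in \KK_d$. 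Thus the set of faces of cones in $\KK_d$ is exactly $\Xi_d$, which proves (3) and in particular shows $\Xi_d$ is a complete projective fan. The only technical obstacle is Lemma \ref{lem:aux}, which drives everything; assuming that lemma, the remainder of the argument is a direct application of the tools from Section \ref{sec:orderedtrees}.
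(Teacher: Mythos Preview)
Your proof is correct and follows essentially the same approach as the paper: verify the hypotheses of Lemma \ref{lem:main_tool} via Lemmas \ref{lem:treefancy} and \ref{lem:aux} to obtain (1), (2), and that $\KK_d$ induces the normal fan, then invoke Proposition \ref{prop:kapface} to identify the induced fan as $\Xi_d$. The paper's version is slightly terser in the final step, simply noting that $\KK_d$ consists of the maximal cones of $\Xi_d$ and citing Proposition \ref{prop:kapface}, but the logic is the same.
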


\begin{proof}
	It follows from Lemmas \ref{lem:treefancy} and \ref{lem:aux} that the set of cones $\KK_d=\{\sigma(\pi, \TT):(\pi, \TT) \in \fS_{d+1} \times \sT_{d,d}\}$ in $W_d$ and the set of points $\left\{v_{\pi,\TT}^{(\balpha,\bbeta)}\ :\ (\pi,\TT) \in \fS_{d+1}\times \sT_{d,d}\right\}$ in $U_d^\balpha$ satisfy the hypothesis of Lemma \ref{lem:main_tool}. Hence, we conclude that the first two statements are true, and that the $\KK_d$ induces the normal fan of $\PA(\balpha,\bbeta).$  
	However, since $\KK_d$ contains all the maximal cones in $\Xi_d,$ by Proposition \ref{prop:kapface}, 
	we have that $\Xi_d$ is induced by $\KK_d.$ Therefore, \eqref{itm:nestednormal} follows. 
\end{proof}

We can now prove our main theorem.

\begin{proof}[Proof of Theorem \ref{thm:permassoc}]
	By Proposition \ref{prop:nestedpermasso}~\eqref{itm:nestednormal} and Lemmas \ref{lem:nestedlodayfan}~\eqref{itm:nestedfaceposet} and \ref{lem:anti}, we have that the face poset of $\PA(\balpha,\bbeta)$ is the Kapranov poset $\cK\Pi_d$.
	Hence, we conclude that $\PA(\balpha, \bbeta)$ is a $d$-permuto-associahedron.
	Finally, it follows from Proposition \ref{prop:nestedpermasso}~\eqref{itm:nestednormal} and Lemma \ref{lem:nestedlodayfan}~\eqref{itm:nestedlodaycoarsen} that $\PA(\balpha,\bbeta)$ is a generalized nested permutohedron.
\end{proof}

\subsection{Inequality description for nested permuto-associahedra} 
It follows from Proposition \ref{prop:nestedpermasso} that we can apply Lemma \ref{lem:det-ineq} to find an inequality description for the nested permuto-associahedron $\PA(\balpha,\bbeta)$.

\begin{thm} \label{thm:ineqs_permasso}
	Let $(\balpha, \bbeta) \in \R^{d+1} \times \R^d$ be a $\sT$-appropriate pair of strictly increasing sequences $(\balpha, \bbeta) \in \R^{d+1} \times \R^d$. Suppose $\mathbf{b} \in \R^{\overline{\sO_{d+1}}}$ is defined as follows: for each $\cS = (S_1, S_2,\dots, S_k) \in \overline{\sO_{d+1}},$ if $\Type(\cS)=(t_0, t_1,t_2,\dots, t_k),$ let
	\begin{equation}
		b_{\cS} =  \left( \sum_{i=1}^{k} i \sum_{j=t_{i-1}+1}^{t_i} \alpha_j\right) + \left( \sum_{j=1}^d \beta_j -\sum_{i=1}^k \sum_{j=1}^{|S_i|}\beta_j\right). 
		\label{equ:new_usualb}
	\end{equation} 
	Then we have the following facet-defining inequality description for $\PA(\balpha, \bbeta)$:
	\begin{equation}\label{eq:permasso_ineqs}
		\PA(\balpha, \bbeta) = \left\{\x\in U_d^\balpha \ : \ \langle \e_{\cS}, \x \rangle \leq b_{\cS},\quad \forall  \cS\in\overline{\sO_{d+1}}\right\}.
	\end{equation}
	
\end{thm}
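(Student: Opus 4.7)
The plan is to apply Lemma \ref{lem:det-ineq} to $\PA(\balpha,\bbeta)$. By Proposition \ref{prop:nestedpermasso}, the vertex set of $\PA(\balpha,\bbeta)$ is $\{v_{\pi,\TT}^{(\balpha,\bbeta)}:(\pi,\TT)\in\fS_{d+1}\times\sT_{d,d}\}$ with $\ncone(v_{\pi,\TT}^{(\balpha,\bbeta)},\PA(\balpha,\bbeta))=\sigma(\pi,\TT)$, and by Lemma \ref{lem:nestedlodayfan}~\eqref{itm:nestedlodayray} the one-dimensional cones of $\Xi_d$ are exactly $\{\sigma(\cS,\TT_\cS):\cS\in\overline{\sO_{d+1}}\}$, generated by $\e_\cS$. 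So Lemma \ref{lem:det-ineq} already delivers a minimal inequality description of $\PA(\balpha,\bbeta)$ indexed by $\overline{\sO_{d+1}}$, and it only remains to verify that for every $\cS \in \overline{\sO_{d+1}}$,
\[
\max_{(\pi,\TT)\in\fS_{d+1}\times \sT_{d,d}} \langle \e_\cS, v_{\pi,\TT}^{(\balpha,\bbeta)}\rangle = b_\cS.
\]

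By the second part of Lemma \ref{lem:det-ineq}, the max can be realized by any single pair $(\pi,\TT)$ such that the ray $\sigma(\cS,\TT_\cS)$ is contained in the cone $\sigma(\pi,\TT)$; by Proposition \ref{prop:kapface} this amounts to choosing $(\pi,\TT)$ with $(\cS(\pi),\TT)\le_{KP}(\cS,\TT_\cS)$. I would fix a specific such pair: let $\pi$ be any permutation with $\pi^{-1}$ listing the elements of $S_1$, then $S_2$, $\ldots$, then $S_k$ (so $\cS(\pi)$ refines $\cS$), and let $\TT$ be the ``left-comb of small trees'' $(((\TT_1\TT_2)\TT_3)\cdots \TT_k)$, where each $\TT_i$ is an arbitrary complete binary tree with $|S_i|$ leaves. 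One checks that this pair satisfies $(\cS(\pi),\TT)\le_{KP}(\cS,\TT_\cS)$ by exhibiting a path of covering relations: first, for each $i$, repeatedly apply type \ref{condition2} moves at minimal internal vertices inside $\TT_i$ to collapse $\TT_i$ into a single leaf labeled $S_i$; then contract the remaining top-level internal edges of the left-comb via type \ref{condition1} moves until only the root's internal vertex survives, giving $(\cS,\TT_\cS)$.

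The remaining step is to compute $\langle \e_\cS, v_{\pi,\TT}^{(\balpha,\bbeta)}\rangle$ directly from \eqref{equ:defnpav}. Using $\langle \e_\cS,\e_j\rangle=a$ whenever $j\in S_a$ and the fact that with our choice of $\pi$ the function $i\mapsto f(\pi^{-1}(i))$ (where $f(j)$ is the block index of $j$) is a non-decreasing step function jumping by $1$ exactly at $i=t_1,\ldots,t_{k-1}$, a straightforward manipulation yields
\[
\langle \e_\cS, v_{\pi,\TT}^{(\balpha,\bbeta)}\rangle \;=\; \sum_{i=1}^{k} i\sum_{j=t_{i-1}+1}^{t_i}\alpha_j \;+\; \sum_{m=1}^{k-1}\beta_{\TT,t_m},
\]
and the first term already matches the $\alpha$-part of $b_\cS$. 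For the $\beta$-part, the left-comb structure determines $\TT_{(t_m)}$ explicitly: it has $t_{m+1}-1$ internal vertices with left subtree of $t_m-1$ internal vertices and right subtree $\TT_{m+1}$ of $|S_{m+1}|-1$ internal vertices. Summing $\val(\bbeta,\TT_{(t_m)})$ over $m=1,\ldots,k-1$ and telescoping the two ``cumulative $\beta$'' terms then collapses the expression to the $\beta$-part of $b_\cS$ in \eqref{equ:new_usualb}. The main obstacle is really just organizing the last telescoping sum cleanly; conceptually the argument is structurally identical to the proof of Theorem \ref{thm:ineqs} for the Loday associahedron.
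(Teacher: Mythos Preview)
Your approach is essentially the paper's: apply Lemma~\ref{lem:det-ineq} via Proposition~\ref{prop:nestedpermasso} and Lemma~\ref{lem:nestedlodayfan}~\eqref{itm:nestedlodayray}, reduce to checking $\langle \e_\cS, v_{\pi,\TT}^{(\balpha,\bbeta)}\rangle = b_\cS$ for one pair $(\pi,\TT)$ with $(\cS(\pi),\TT)\le_{KP}(\cS,\TT_\cS)$, and split that inner product into an $\alpha$-part and a $\beta$-part, the latter being $\sum_{m=1}^{k-1}\beta_{\TT,t_m}$. The only real difference is in how this last sum is evaluated. The paper does not commit to a specific~$\TT$: it notes that for \emph{any} such pair there exist subtrees $\TT_1,\dots,\TT_k$ of $\TT$ whose internal labels partition $[d]\setminus\Type(\cS)$, writes $\sum_{j\in\Type(\cS)}\beta_{\TT,j}=\sum_{j=1}^d\beta_{\TT,j}-\sum_i\sum_{j\in\II_\TT(\TT_i)}\beta_{\TT,j}$, and evaluates each piece by Lemma~\ref{lem:sumval}. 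Your explicit left-comb and direct telescoping of the values $\val(\bbeta,\TT_{(t_m)})$ reaches the same conclusion without invoking Lemma~\ref{lem:sumval}, at the minor cost of having to verify that your concrete pair really satisfies $(\cS(\pi),\TT)\le_{KP}(\cS,\TT_\cS)$; the paper sidesteps that step since it only needs the existence of such a pair, which is automatic from the grading of $\cK\Pi_d$.
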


\begin{proof}
	
	Recall $\cS(\pi)$ and $\TT_\cS$ is defined in \eqref{equ:cSpi} and Notation \ref{notn:ST}, respectively. Applying Lemma \ref{lem:det-ineq} together with Proposition \ref{prop:kapface}, Lemma \ref{lem:nestedlodayfan}~\eqref{itm:nestedlodayfantop}\eqref{itm:nestedlodayray}, and Proposition \ref{prop:nestedpermasso}~\eqref{itm:nestedvertexcone} one sees it is left to show that for any $\cS\in\overline{\sO_{d+1}}$, if we choose $(\pi,\TT) \in \fS_{d+1}\times \sT_{d,d}$ such that $(\cS(\pi),\TT) \le_{KP} (\cS, \TT_\cS)$ in the Kapranov poset, then 
	\begin{equation}\label{eq:to_show}
		\left\langle \e_\cS, \v^{(\balpha,\bbeta)}_{\pi,\TT}\right\rangle = b_\cS,
	\end{equation}
	where $b_\cS$ is given by Equation \eqref{equ:new_usualb}.

	We compute from the definitions in Equations \eqref{equ:defnpav} and \eqref{equ:defneT}:
	\begin{align}
		\left\langle \e_\cS, \v^{(\balpha,\bbeta)}_{\pi,\TT}\right\rangle &= \left\langle \sum_{i=1}^k i \e_{S_i}, \sum_{j=1}^{d+1} \alpha_j \e_{\pi^{-1}(j)} + \sum_{j=1}^d \beta_{\TT,j} \ff_{j}^\pi\right\rangle \nonumber \\
		&=  \left\langle \sum_{i=1}^k i\e_{S_i}, \sum_{j=1}^{d+1} \alpha_j \e_{\pi^{-1}(j)}\right\rangle + \left\langle \sum_{i=1}^k i \e_{S_i},  \sum_{j=1}^d \beta_{\TT,j} \ff_{j}^\pi\right\rangle. \label{eq:twosums}
	\end{align}
	We will show that the two terms in \eqref{eq:twosums} are equal to the two terms in \eqref{equ:new_usualb}. 
	
	Note that the leaves of the partition labeled tree $(\cS(\pi),\TT)$ are labeled by $\cS(\pi)$ $=$ $( \{\pi^{-1}(1)\},$ $\{\pi^{-1}(2)\},$ $\ldots,$ $\{\pi^{-1}(d+1)\})$ from left to right. Since $(\cS(\pi),\TT) \le_{KP} (\cS, \TT_\cS)$, by the definition of the covering relation of the Kapranov poset, the followings are true:
	\begin{enumerate}[label=(\roman*)]
		\item	$\cS(\pi)$ refines $\cS$.  Hence, 
		\begin{equation}
			\pi^{-1}(j) \in S_i \text{ if and only if } t_{i-1}+1 \le j \le t_i.
			\label{eq:jinSi}
		\end{equation}
		\item For each $1 \le i \le k,$ there exists a subtree $\TT_i$ of $\TT$ such that the leaves of $\TT_i$ are labeled by $\{ \pi^{-1}(j) \ : \ t_{i-1}+1 \le j \le t_i\}.$ Thus, $\II_\TT(\TT_i)= \{ j \ : \ t_{i-1}+1 \le j \le t_i-1\}.$
	\end{enumerate}
	Clearly, by \eqref{eq:jinSi}, the first term in \eqref{eq:twosums} is equal to the first term in \eqref{equ:new_usualb}. 
	
	Next, applying \eqref{eq:jinSi} again, we obtain that
	$\ds \left\langle \sum_i i\e_{S_i},\ff_{j}^\pi\right\rangle$ is $1$ if $j \in \Type(\cS)$ and is $0$ otherwise.
	Therefore, the second term in \eqref{eq:twosums} is equal to $\ds \sum_{j\in\Type(\cS)}\beta_{\TT,j}$. By condition (ii) above, the set of labels for  internal vertices of $\TT$ that do not appear in any of $\TT_1, \dots, \TT_k$ is exactly $\Type(\cS).$
	Therefore,
	\[
	\sum_{j\in\Type(\cS)}\beta_{\TT,j} = \sum_{j=1}^d \beta_{\TT,j} - \sum_{i=1}^k \sum_{j\in \II_\TT(\TT_i)} \beta_{\TT,j}. 
	\]
	By Lemma \ref{lem:sumval}, we have that 
	\[ \sum_{j=1}^d \beta_{\TT,j} = \sum_{j=1}^d \beta_j \quad \text{and} \quad \sum_{j\in \II_\TT(\TT_i)} \beta_{\TT,j}=\sum_{j=1}^{|S_i|} \beta_j.\]
	Therefore, we conclude that the second term in \eqref{eq:twosums} is equal to the second term in \eqref{equ:new_usualb}, completing the proof.
\end{proof}

Notice in particular that the set of facets of $\PA(\balpha,\bbeta)$ and $\Perm(\balpha,\bbeta)$ are in bijection; in contrast to Corollary \ref{cor:remove}, the nested permuto-associahedron $\PA(\balpha,\bbeta)$ cannot be obtained by removing facets from a nested permutohedron.
We also remark that we do not have a Minkowski sum decomposition as in Corollary \ref{cor:minkowski} for $\PA(\balpha,\bbeta)$.

\section{Comparison to previous work}\label{sec:comparison}

In this section, we highlight some differences and similarities between our realization, Reiner-Ziegler's and Gaiffi's.
  We start by noting a common similarity among all three realizations:
  %There is a unifying way of constructing these three realizations: 
  All constructions can be obtained by symmetrizing an embedding of an $(d-1)$-associahedron in $\R^{d+1}$.
The constructions and proofs are different insomuch as they use different associahedra.

\subsection{Comparison to Reiner-Ziegler's realization}

Reiner and Ziegler's paper \cite{reiner_ziegler} has two distinct parts.
In the first part, they prove that the \emph{dual} of Kapranov's poset can be realized as the face poset of a CW-ball \cite[Theorem 1]{reiner_ziegler}, which they called the \emph{sphericity} theorem. 
Contrary to Kapranov's realization as a CW-ball, Reiner and Ziegler's approach is purely combinatorial.
In the second part of \cite{reiner_ziegler}, they provide the first polytopal realization of Kapranov's poset \cite[Theorem 2]{reiner_ziegler} using methods that independent from the first part.

Interestingly, our approach turns out to be more related to the proof of Reiner and Ziegler's sphericity theorem.
The approach in \cite{reiner_ziegler} to prove sphericity is to glue together cells of the CW-ball arising from the second barycentric subdivision of a simplex. 
See the first row of \cite[Figure 5]{reiner_ziegler}.
The cells of the second barycentric subdivision of a simplex are in natural bijection with the cones of the nested braid fan~\cite[Section 6]{def-cone}.
The fan $\Xi_d$ is a coarsening of the nested braid fan and it groups nested braid cones in the same way that Reiner and Ziegler glue the cells.
In this sense, the present work completes the discussion started in \cite[Section 2]{reiner_ziegler} by showing that the proposed gluing results in a polytope, not just a topological ball.

Below we highlight the differences between our realization and Reiner-Ziegler's:
\begin{enumerate}[leftmargin=*]
	\item Whereas we use Loday's associahedra, %they use a secondary polytope of a cyclic polygon and their construction depends on the specific choice of a cyclic polygon, it doesn't work for all polygons.
	  they use a secondary polytope of a specific cyclic polygon.
	\item The resulting realizations in \cite{reiner_ziegler} and  Section \ref{sec:permasso} have different normal fans.
	  In fact, the sets of rays of these two normal fans are different, although for both cases a ray is constructed for each ordered set partition: 
	%Both normal fans have a ray generator for each ordered set partition.
	%Suppose $\cS=(S_1,\dots,S_k)\in \OO_{d+1}$ with type vector $(t_1,\dots,t_{k-1})$.
%	\begin{itemize}
%		\item Reiner and Ziegler associate $\cS$ with the ray spanned by  $\sum_{i=1}^k (t_i+t_{i-1})\ee_{S_i}\in W_d$, where by convention $t_0=0$ and $t_k=d+1$.
%		\item We associate $\cS$ with the ray spanned by $\sum_{i=1}^k i\ee_{S_i}\in W_d$.
	%\end{itemize}
	For each $\cS=(S_1,\dots,S_k) \in \OO_{d+1}$, we associate to $\cS$ a ray spanned by $\ee_\cS=\sum_{i=1}^k i\ee_{S_i}\in W_d$, and Reiner and Ziegler associate to $\cS$ a ray spanned by $\sum_{i=1}^k (t_i+t_{i-1})\ee_{S_i}\in W_d$, where $(t_1,\dots,t_{k-1})$ is the type of $\cS$ and by convention $t_0=0$ and $t_k=d+1$.
	It follows from above descriptions of rays that the two normal fans are not even linearly equivalent.
	\item Reiner and Ziegler's construction is surprisingly \emph{inscribable} (all vertices lie on a sphere), whereas ours never is.
	Indeed any nested permuto-associahedron $\PA(\balpha,\bbeta)$ of dimension greater than one will have a Loday pentagon as a face and these pentagons are never inscribable.
\end{enumerate}
We remark that Reiner and Ziegler also extended their construction to both type-$B$ and type-$D$, and showed that the permuto-associahedron (which correspond to type-$A$) arises as a facet of their type-$B$ version.

\subsection{Comparison to Gaiffi} 

The second realization of the permuto-associahedron was given by Gaiffi \cite{gaiffi}.
His construction is quite general: he constructed permutonestohedra for any nestohedron \cite[Section 7]{bible} of which the associahedron is an example.
Furthermore, he does it for general root systems.
When the root system is of type-$A$ and the nestohedron is the Stasheff-Shnider associahedron, his construction becomes a realization of the permuto-associahedron.
We remark that when the root system is of type-$B$, Gaiffi's version is different from Reiner and Ziegler's.

Even though Gaiffi's approach and ours are manifestly different, a closer inspection reveals that the rays for the normal fan of his construction are generated by the vectors $\ee_{\cS}$ we defined in Section \ref{sec:nested} for ordered set partitions $\cS$, and the rays form the maximal cones in the fan in exactly the same we as in our realization. It follows that Gaiffi's and ours permuto-associahedra have the same normal fan. This is not surprising, as Gaiffi's starting point is Stasheff-Shnider's construction for associahedra and we start with Loday's construction, and Stasheff-Shnider's and Loday's associahedra have the same normal fan. 

Below we list a few more differences between the approaches in our realization in this article and Gaiffi's \cite{gaiffi}:
\begin{enumerate}%[leftmargin=*, label=(G\arabic*)]
  \item Gaiffi starts by choosing a \emph{suitable} list $\varepsilon_1<\dots<\varepsilon_d$ of real numbers.
    We use an \emph{appropriate} pair of $(\balpha,\bbeta)\in\R^{d+1}\times\R^d$. %but for the sake of comparison let's assume that $\balpha$ is fixed, so that our only parameters are $\beta_1<\dots<\beta_d$.
    Gaiffi's definition of suitable requires 
that each one is sufficiently larger than the previous one; see \cite[Definition 3.1]{gaiffi}.
Our definition of appropriateness is a bit more flexible: By Remark \ref{rem:appropriate}, as long as $\balpha\in\R^{d+1}$ is fixed, any increasing sequence $\bbeta\in\R^d$ is appropriate if every entry is smaller than a global constant. So it turns out that our flexibility in the choice of $\balpha$ allows us to relax the conditions on the choice of $\bbeta$.

\item Gaiffi describes his realization by providing inequality description, and then describing vertices as intersections of $d$ facets (even though they will eventually be contained in more facets).
  However, he did not provide explicit description for vertex coordinates of his permuto-associahedra. We start by explicitly constructing vertices and the normal fan of our permuto-associahedra, and then provide explicity inequality description as a consequence of our method. 

\item For each ordered set partition $\cS=(S_1,\dots,S_k)$ with type $(t_1, t_2, \dots, t_{k-1})$, as mentioned above Gaiffi and we both associate to a same normal vector $\ee_\cS.$ The corresponding inequality in the inequality description of Gaiffi's permuto-associahedron is
  \begin{equation}\label{eq:gaiffi}
	\langle \ee_{\cS},\x\rangle \leq \varepsilon_d-(\varepsilon_{|S_1|}+\dots+\varepsilon_{|S_k|}),
\end{equation}
and ours is (by Theorem \ref{thm:ineqs_permasso})
\begin{equation}\label{eq:us}
  %\langle \ee_{\cS},\x\rangle \leq A-\left(\sum_{j=1}^{|S_1|}(\beta_j-\alpha_j)+\dots+\sum_{j=1}^{|S_k|}(\beta_j-\alpha_j)\right),
  \langle \ee_{\cS},\x\rangle \leq \left( \sum_{i=1}^{k} i \sum_{j=t_{i-1}+1}^{t_i} \alpha_j\right) + \left( \sum_{j=1}^d \beta_j -\sum_{i=1}^k \sum_{j=1}^{|S_i|}\beta_j\right).
\end{equation}
The right hand sides of both inequalities depend on sizes of blocks in $\cS$. However, \eqref{eq:us} depends on the \emph{orders} of the blocks in $\cS$ but \eqref{eq:gaiffi} does not.

\end{enumerate}

\begin{ques}
What is the set of all three-dimensional permuto-associahedra that arise both in Gaiffi's and in our construction?
It seems that the areas of the pentagonal faces relative to the areas of the square faces behave differently in both constructions.
Gaiffi's realization in dimension $3$ is depicted in the left hand side of \cite[Figure 5]{gaiffi} where the pentagons are large in comparison to the little square faces, whereas in Figure \ref{fig:permasso} our pentagons are small with respect to the permutohedron, and indeed in our constructions they can be arbitrarily small compare to the other faces.
\end{ques}
%\fnote{rephrase this into a question}
%A superficial difference can be seen in some of the figures. Gaiffi's realization in dimension $3$ is depicted in the left hand side of \cite[Figure 5]{gaiffi} where the pentagons are large in comparison to the little square faces, whereas in Figure \ref{fig:permasso} our pentagons are small with respect to the permutohedron. %\textcolor{purple}{We believe this difference is a result of the difference pointed out in }

\commentout{
Even though Gaiffi's approach and ours are manifestly different, a closer inspection reveals that the resulting normal fans are the same \footnote{This is not a big surprise since Stasheff-Shnider's construction is normally equivalent to Loday's.} 
In \cite[Section 3]{gaiffi} he defines permutonestohedra by giving explicit facet-defining inequalities.
It can be verified that his normal vectors are the vectors $\ee_{\cS}$ we defined in Section \ref{sec:nested}.
Thus, the inequality in $U_d$ for each ordered set partition $\cS=(S_1,\dots,S_k)$ in \cite{gaiffi} is
\begin{equation}\label{eq:gaiffi}
	\langle \ee_{\cS},\x\rangle \leq \varepsilon_d-(\varepsilon_{|S_1|}+\dots+\varepsilon_{|S_k|}),
\end{equation}
and our is (by Theorem \ref{thm:ineqs_permasso})
\begin{equation}\label{eq:us}
  %\langle \ee_{\cS},\x\rangle \leq A-\left(\sum_{j=1}^{|S_1|}(\beta_j-\alpha_j)+\dots+\sum_{j=1}^{|S_k|}(\beta_j-\alpha_j)\right),
  \langle \ee_{\cS},\x\rangle \leq \left( \sum_{i=1}^{k} i \sum_{j=t_{i-1}+1}^{t_i} \alpha_j\right) + \left( \sum_{j=1}^d \beta_j -\sum_{i=1}^k \sum_{j=1}^{|S_i|}\beta_j\right),
\end{equation}
where $A = \sum_{j=1}^d \beta_j.$
By letting \[
\varepsilon_i=(\beta_1-\alpha_1)+\dots+(\beta_i-\alpha_i)\text{ for }i=1,\dots,d-1\text{ and }\varepsilon_d=\beta_1+\dots+\beta_d\], we see that the right hand sides of Equations \eqref{eq:gaiffi} and \eqref{eq:us} agree. 

Now we compare the requirements on $\varepsilon$ and $\alpha,\beta$.
Gaiffi's definition of suitable requires 
that each one is sufficiently larger than the previous one, see \cite[Definition 3.1]{gaiffi}.
Our definition of appropriateness is a bit more flexible: By Remark \ref{rem:appropriate}, as long as $\balpha\in\R^{d+1}$ is fixed, any increasing sequence $\bbeta\in\R^d$ is appropriate if every entry is smaller than a global constant. So it turns out that our flexibility in the choice of $\balpha$ allows us to relax the conditions on the choice of $\bbeta$.
\begin{rem}
	This superficial difference can be seen in some of the figures. Gaiffi's realization in dimension 3 is depicted in the right hand side of \cite[Figure 5]{gaiffi} where the pentagons are large in comparison to the little square faces, whereas in Figure \ref{fig:permasso} our pentagons are small with respect to the permutohedron.
\end{rem}

One last important difference are the vertices.
The vertices of Gaiffi's permuto-associahedron are described in \cite[Section 3]{gaiffi} as intersections of $d$ facets (even though they will eventually be contained in more facets).
In contrast, our construction provides explicit coordinates for the vertices and we can also describe explicit inequalities in Theorem \ref{thm:ineqs_permasso}.
}

There are more relations to be explored.
Because Gaiffi's permuto-associahedron has the same normal fan as ours, which is a {generalized nested permutohedron}, maybe all of Gaiffi's permutonestohedra are {generalized nested permutohedra} as well, in other words, that their normal fans are all coarsening of the nested braid fan \cite{def-cone}. 
Coincidentally, what we call nested permutohedron is what Gaiffi calls \emph{permutopermutohedron}.

\bibliography{biblio}
\bibliographystyle{plain}

\end{document}